\newsavebox{\@abr}
\newcommand{\llangle}[1][]{\savebox{\@abr}{\(\m@th{#1\langle}\)}%
  \mathopen{\copy\@abr\mkern2mu\kern-0.9\wd\@abr\usebox{\@abr}}}
\newcommand{\rrangle}[1][]{\savebox{\@abr}{\(\m@th{#1\rangle}\)}%
  \mathclose{\copy\@abr\mkern2mu\kern-0.9\wd\@abr\usebox{\@abr}}}
  \newsavebox{\@sbr}
\newcommand{\llsquare}[1][]{\savebox{\@sbr}{\(\m@th{#1{[}}\)}%
  \mathopen{\copy\@sbr\mkern3mu\kern-0.9\wd\@sbr\usebox{\@sbr}}}
\newcommand{\rrsquare}[1][]{\savebox{\@sbr}{\(\m@th{#1{]}}\)}%
  \mathclose{\copy\@sbr\mkern3mu\kern-0.9\wd\@sbr\usebox{\@sbr}}}
\newcommand{\medmatrix}[2][.8]{%
  \scalebox{#1}{%
    \renewcommand{\arraystretch}{.8}%
    $\begin{pmatrix}#2\end{pmatrix}$%
  }
}
	\providecommand{\corollaryname}{Corollary}
	\providecommand{\definitionname}{Definition}
	\providecommand{\examplename}{Example}
	\providecommand{\lemmaname}{Lemma}
	\providecommand{\propositionname}{Proposition}
	\providecommand{\remarkname}{Remark}
	\providecommand{\theoremname}{Theorem}
	\providecommand{\setupname}{Setup}
	\providecommand{\conjecturename}{Conjecture}
	\providecommand{\questionname}{Question}
	\providecommand{\objectivename}{Objective}
	\providecommand{\aimname}{Aim}
	\providecommand{\notationname}{Notation}
	\theoremstyle{plain}
		\newtheorem{thm}{\protect\theoremname}[section] 
		\newtheorem{prop}[thm]{\protect\propositionname}
		\newtheorem{lem}[thm]{\protect\lemmaname}
		\newtheorem{cor}[thm]{\protect\corollaryname}
	\theoremstyle{definition}
		\newtheorem{defn}[thm]{\protect\definitionname}
		\newtheorem{example}[thm]{\protect\examplename}
		\newtheorem{setup}[thm]{\protect\setupname}
		\newtheorem{notn}[thm]{\protect\notationname}
	\theoremstyle{remark}
		\newtheorem{rem}[thm]{\protect\remarkname}
	\numberwithin{figure}{section}
	\numberwithin{equation}{section}
	\tikzset{commutative diagrams/.cd, 
		mysymbol/.style = {start anchor=center, end anchor = center, draw = none}}
    \tikzset{
    labl/.style={anchor=north, rotate=90, inner sep=2mm}
    }
		\newcommand{\rad}{\operatorname{rad}\nolimits}
		\newcommand{\soc}{\operatorname{soc}\nolimits}
		\newcommand{\im}{\operatorname{im}\nolimits}
\newcommand{\rMod}[1]{\operatorname{\mathsf{Mod}\hspace{0.2mm}-\hspace{-0.4mm}}\nolimits{#1}}
\newcommand{\lMod}[1]{{#1}\operatorname{\hspace{-0.4mm}-\hspace{0.2mm}\mathsf{Mod}}\nolimits}
\newcommand{\lmod}[1]{{#1}\operatorname{\hspace{-0.4mm}-\hspace{0.2mm}\mathsf{mod}}\nolimits}
\newcommand{\rfl}[2]{\tensor*[_{#1}]{\operatorname{\mathsf{fl}\hspace{0.2mm}-\hspace{-0.4mm}}\nolimits{#2}}{}}
\newcommand{\lfl}[2]{\tensor*[]{{#1}\operatorname{\hspace{-0.4mm}-\hspace{0.2mm}\mathsf{fl}}\nolimits}{_{#2}}}
\newcommand{\leftmult}[1]{\tensor*[_{#1}]{\mathsf{m}\hspace{-0.3mm}}{}}
\newcommand{\rightmult}[1]{\tensor*[]{\hspace{-0.3mm}\mathsf{m}}{_{#1}}}
  \newcommand{\Hom}[1]{\tensor*[]{\operatorname{Hom}}{_{#1}}}
  \newcommand{\End}[1]{\tensor*[]{\operatorname{End}}{_{#1}}}
  \newcommand{\Ext}[2]{\tensor*[]{\operatorname{Ext}}{^{#1}_{#2}}}
	\newcommand{\maxideal}{\mathfrak{m}}
 	\newcommand{\jacrad}{\mathcal{J}}
    \newcommand{\env}{\mathfrak{a}}
    \newcommand{\homdual}[2]{\tensor*[]{\operatorname{\mathscr{C}}}{_{\hspace{-0.45mm}#1}^{#2}}}
    \newcommand{\matdual}[2]{\tensor*[]{\operatorname{\mathscr{D}}}{_{\hspace{-0.45mm}#1}^{#2}}}
\newenvironment{acknowledgements}{
		\begin{abstract}} {\end{abstract}}
\DeclareRobustCommand{\rvdots}{%
  \vbox{
    \baselineskip4\p@\lineskiplimit\z@
    \kern-\p@
    \hbox{.}\hbox{.}\hbox{.}
  }}
\newcommand{\compactdhat}{\skew{1}\widehat{\rule{0ex}{1.4ex}\smash{\mathbb{Z}}}}
\title{String algebras over local rings: 
admissibility  and biseriality}
\author{Raphael Bennett-Tennenhaus}
\address{Raphael Bennett-Tennenhaus\newline
		Department of Mathematics,\newline 
		Aarhus University, 
		Ny Munkegade 118,  
		8000 Aarhus C,  
		Denmark}
\email{raphaelbennetttennenhaus@gmail.com}
\date{}
\keywords{
Admissible ideal,
B\"{a}ckstr\"{o}m order, 
biserial ring, 
Gabriel quiver, 
path algebra, 
string algebra}
\subjclass[2020]{16G20, 16G30, 16H10, 16L30}
\begin{document}

\maketitle

\begin{abstract} 
For a path algebra over a noetherian local ground ring, the notion of an  admissible ideal was defined  by Raggi-C{\'a}rdenas and  Salmer{\'o}n.  
We provide sufficient conditions for admissibility and use them to study  semiperfect module-finite  algebras over local rings   whose quotient by the radical is a product of copies of the residue field. 
We define string algebras over local ground rings and recover the notion  introduced by Butler and Ringel when the ground ring is a field. 
We prove they are biserial in a sense of  Kiri\v{c}enko and Kostyukevich. 
We describe the syzygies of uniserial summands of the radical. 
We give examples of B\"{a}ckstr\"{o}m orders that are string algebras over discrete valuation rings. 
\end{abstract}

\section{Introduction}
\label{section-intro}
\emph{String algebras} were defined by Butler and Ringel \cite{ButRin1987} and have the form $kQ/I$ where $kQ$ is a path algebra of a quiver $Q$ over a ground field $k$, $I$ is  admissible  and other compatibility conditions hold. 
In \cite{ButRin1987}  an explicit description is given for the finite-dimensional indecomposable modules, the irreducible maps between them and  the Auslander--Reiten sequences. 
See also work of Gel'fand and Ponomarev \cite{GelPon1968} and Wald and Waschb\"{u}sch  \cite{WalWas1985}. 
These results have been prolific in representation theory. 

On the one hand, already \cite{WalWas1985} used the classification of modules over string algebras to obtain a similar classification\footnote{But for the finite-dimensional projective-injective indecomposable modules.} for the special-biserial algebras introduced by Skowro\'{n}ski and Waschb\"{u}sch \cite{SkoWas1983}. 
Representations for  other interesting classes of algebras have also been studied in terms of string algebras; see for example \cite{Calderon-Giraldo-Universal-deformation-rings-of-string-modules-over-certain-class-of-self-injective-special-biserial-algebras, Cra2018, Erdmann-Skowronski-algebras-of-generalised-dihedral-type, Geiss-Labardini-Fragoso-Schroer-schemes-of-modules-over-gentle-algebras-and-laminations-of-surfaces, Leszczynski-skowronski-tame-generalised-canonical-algebras}. 
On the other hand, the gentle algebras introduced by Assem and Skowro\'{n}ski \cite{AssSko1986} form a particular class of better-behaved string algebras. 
The aforementioned results from \cite{ButRin1987} have been used extensively to study combinatorial, geometric and homological properties of gentle algebras; see for example \cite{Baur-Coelho-Simoes-A-geometric-model-for-the-module-category-of-a-gentle-algebra, BekMer2003, Canakci-Pauksztello-Schroll-on-extensions-for-gentle-algebras, Kalck-Singularity-categories-of-gentle-algebras, Schroer-Modules-without-self-extensions-over-gentle-algebras}. 
Hence string algebras over fields are both important and well-understood. 

\emph{Module}-\emph{finite}\footnote{An $R$-algebra $\Gamma$ is \emph{module}-\emph{finite} (over $R$) provided it is finitely generated when considered as an $R$-module. } algebras over local rings have been looked at from several representation-theoretic perspectives. 
Gordan and Green \cite{GorGre1976} gave a representation theory for noether algebras to mimic that of artin algebras. 
Ringel and Roggenkamp \cite{RinRog1979} focused on representations of orders using valued graphs. 
Auslander \cite{Aus1986} looked at almost-split sequences for isolated singularities. 
Pave\v{s}i\'{c} \cite{Pav2010} studied module-finite algebras over complete local rings as a prototypical class of semiperfect rings. 
Niu \cite{Niu2014} considered almost-split sequences and triangles for such algebras.  
Ueyama \cite{Uey2013} studied graded Gorenstein isolated singularities. 
Gorenstein algebras appear also in more recent work of Iyengar and  Krause \cite{IyeKra2020}, Gnedin \cite{Gnedi2022}, and Esentepe \cite{esentepe2022note}. 

\emph{Path algebras} over a  commutative ring $R$ are denoted $RQ$ and defined by the $R$-span of the paths in a quiver  $Q$. 
Modules over $RQ$ 
have been studied before using a variety of approaches.  
For example Schofield \cite{Scho1992} considered questions about dimension vectors, Crawley-Boevey \cite{Cra1996,crawley-boevey-rigid-integral-revisited}  studied rigid lattices and 
Ortega \cite{Orte2006} looked at poset representations and incidence algebras. 
Sometimes additional restrictions on $R$ are imposed; see for example work of  
Miyamoto \cite{Miyamoto-On-the-non-periodic-stable-Auslander--Reiten-Heller-component-for-the-Kronecker-algebra-over-a-complete-discrete-valuation-ring}, Lee and Voll \cite{LeeVol2020}, 
Stangle \cite{Sta2017}, 
Hausel, Letellier and Rodriguez Villegas \cite{HauLetRod2018},  
Ringel and Zhang \cite{RinZha2011} and Gei{\ss},  Leclerc and Schr{\"o}er \cite{GeiLecSchro2014,geiss-leclerc-schroer-rigid-modules-and-schur-roots}. 

In this article we take $R$ to be a noetherian local ring, and study instead  quotients rings of the form $RQ/I$ that are finitely generated as $R$-modules. 
Here the ideal $I$ is \emph{admissible} in the sense discussed below.

\subsection{Admissibility}
\label{subsec-admissibility}

\emph{Admissible ideals} have been defined for path algebras over local rings in work of Raggi-C{\'a}rdenas and  Salmer{\'o}n  \cite{CarSal1987}. We recall and follow  \cite{CarSal1987} closely. 
See also work of Pe{\~n}a and Raggi-C{\'a}rdenas  \cite{PenRag1988}. 
We generalise\footnote{See \Cref{comparing-example} for a comparison of the definitions. } these notions and provide sufficient conditions for admissibility. 
In these terms one may characterise semiperfect basic module-finite elementary algebras over local rings; see \Cref{thm-characterising-semiperfect-mod-fin-alg-with-brick-simples} and \Cref{thm-admissible-presentations}. 
In \cite{PenRag1988, CarSal1987} both sets of authors also assume that $R$ is complete in the $\maxideal$-adic topology, and take the residue field $k$ to be algebraically closed.

Let $Q$ be a finite quiver. 
Let $R$ be a commutative noetherian local ring  with unique maximal ideal $\maxideal$ and residue field  $k=R/\maxideal$. 
Let $RQ$ be the path algebra. 
Fix a (two-sided) ideal $I\triangleleft RQ$ and  let  $\Lambda =RQ/I$. 
    \begin{itemize}
    \item $I$ is said to be \emph{permissible} if $a\notin I$, or equivalently $\Lambda a\neq 0\neq a\Lambda$, for any arrow $a$. 
    \item $I$ is said to be \emph{arrow}-\emph{radical} if (i) and (ii) below hold for any vertex $v$  with trivial path $e_{v}$. 
        \begin{enumerate}[label={\upshape(\roman*)}]
        \item $\Lambda e_{v}$ is local\footnote{Recall a module $M$ is local if it has a unique maximal submodule, that must be equal to its jacobson radical $\rad(M)$. 
        In (i) and (iii) the submodules involved are left $\Lambda$-submodules, and in (ii) and (iv) they are right $\Lambda$-submodules. 
        } with 
        $\rad(\Lambda e_{v})=\sum \Lambda a$ where the sum runs through arrows $a$ with tail $t(a)=v$. 
        \item $e_{v}\Lambda $ is local  with 
        $\rad(e_{v}\Lambda )=\sum a\Lambda $ where the sum runs through arrows $a$ with head $h(a)=v$. 
        \end{enumerate}
    \item $I$ is said to be  \emph{arrow}-\emph{distinct} if (iii) and (iv) below hold for any arrow $a$. 
        \begin{enumerate}[label={\upshape(\roman*)}]
        \setcounter{enumi}{2}
        \item $\Lambda a\cap \sum \Lambda b\subseteq \rad(\Lambda a)$ where the sum runs through the arrows $b\neq a$ with $t(b)=t(a)$. 
    \item $a\Lambda \cap \sum b\Lambda \subseteq \rad(a\Lambda )$ where the sum runs through the arrows $b\neq a$ with $h(b)=h(a)$. 
        \end{enumerate}
\end{itemize}
A ring $\Gamma$ with jacobson radical $\jacrad$ is \emph{semiperfect} provided the quotient ring $\overline{\Gamma}=\Gamma/\jacrad$ is semisimple artinian and provided that any element of $\Gamma$ that defines an idempotent in $\overline{\Gamma}$ 
 lifts to an idempotent in $\Gamma$. 
A semiperfect ring $\Gamma$ is \emph{basic} if $\overline{\Gamma}$ is a product of division rings. 
An \emph{module}-\emph{finite} $R$-algebra is an $R$-algebra that is  finitely generated as an $R$-module. 
A semiperfect and basic module-finite $R$-algebra $\Gamma$ is \emph{elementary} provided $\overline{\Gamma}$ is a product of copies of $k$. 
Such algebras are characterised as follows. 
\begin{thm}
    \label{thm-characterising-semiperfect-mod-fin-alg-with-brick-simples} 
    Any semiperfect, module-finite and elementary  $R$-algebra has the form $RQ/I$ for some quiver $Q$ and some permissible, arrow-radical, and  arrow-distinct ideal $I\triangleleft RQ$. 
\end{thm}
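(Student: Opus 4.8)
The plan is to adapt Gabriel's description of a split basic finite-dimensional algebra as a quotient of its quiver algebra, the one genuinely new feature being that $\jacrad(\Gamma)$ need not be nilpotent. Since $\Gamma$ is semiperfect and elementary, $\overline{\Gamma}=\Gamma/\jacrad(\Gamma)$ is a product of finitely many, say $n$, copies of $k$, and I would lift a complete orthogonal set of primitive idempotents of $\overline{\Gamma}$ to orthogonal idempotents $e_{1},\dots,e_{n}\in\Gamma$ with $e_{1}+\dots+e_{n}=1$. As $\Gamma$ is module-finite over the noetherian ring $R$ it is noetherian, so $\jacrad(\Gamma)$ is finitely generated and $\jacrad(\Gamma)/\jacrad(\Gamma)^{2}$ is a finite-dimensional $\overline{\Gamma}$-bimodule, namely $\bigoplus_{i,j}\overline{e_{j}}\bigl(\jacrad(\Gamma)/\jacrad(\Gamma)^{2}\bigr)\overline{e_{i}}$. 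I would let $Q$ be the finite quiver with vertices $1,\dots,n$ whose arrows $i\to j$ are in bijection with a fixed $k$-basis of $\overline{e_{j}}\bigl(\jacrad(\Gamma)/\jacrad(\Gamma)^{2}\bigr)\overline{e_{i}}$, lift each basis vector to an element $\widetilde{a}\in e_{j}\,\jacrad(\Gamma)\,e_{i}$, and define the $R$-algebra homomorphism $\phi\colon RQ\to\Gamma$ by $e_{v}\mapsto e_{v}$ and $a\mapsto\widetilde{a}$; this is well defined because the $e_{v}$ are orthogonal idempotents summing to $1$ and $e_{h(a)}\widetilde{a}\,e_{t(a)}=\widetilde{a}$.

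The next step is to prove $\phi$ surjective. A $k$-basis of $\jacrad(\Gamma)/\jacrad(\Gamma)^{2}$ adapted to the bimodule decomposition generates that space both as a left and as a right $\overline{\Gamma}$-module, since $\overline{\Gamma}=k^{n}$ is split, so by Nakayama the $\widetilde{a}$ generate $\jacrad(\Gamma)$ both as a left and as a right ideal of $\Gamma$. Reducing modulo $\maxideal\Gamma\subseteq\jacrad(\Gamma)$ one has $\jacrad(\Gamma/\maxideal\Gamma)=\jacrad(\Gamma)/\maxideal\Gamma$, and the images of the $\widetilde{a}$ still span the quotient $\jacrad(\Gamma/\maxideal\Gamma)/\jacrad(\Gamma/\maxideal\Gamma)^{2}$ of $\jacrad(\Gamma)/\jacrad(\Gamma)^{2}$; since $\Gamma/\maxideal\Gamma$ is a finite-dimensional $k$-algebra, whose radical is therefore nilpotent, the usual field-case bootstrap along the radical filtration shows the induced map $kQ\to\Gamma/\maxideal\Gamma$ is onto, that is, $\im\phi+\maxideal\Gamma=\Gamma$. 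Because $\Gamma/\im\phi$ is a finitely generated $R$-module and $\maxideal$ lies in the Jacobson radical of $R$, Nakayama over $R$ forces $\im\phi=\Gamma$. Putting $I=\ker\phi$, the map $\phi$ induces an isomorphism $\Lambda=RQ/I\to\Gamma$ sending the classes of $e_{v}$ and $a$ to $e_{v}$ and $\widetilde{a}$, so it remains to check the three conditions on $I$.

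Permissibility holds because each $\widetilde{a}$ lifts a basis vector of $\jacrad(\Gamma)/\jacrad(\Gamma)^{2}$ and so is nonzero in $\Gamma$, whence $a\notin I$. For arrow-radicality, transport along $\Lambda\cong\Gamma$: the module $\Lambda e_{v}$ becomes $\Gamma e_{v}$, and since $\Gamma$ is semiperfect the radical of any finitely generated module is $\jacrad(\Gamma)$ times it, so $\Gamma e_{v}/\rad(\Gamma e_{v})\cong\overline{\Gamma}\,\overline{e_{v}}$ is simple, $\Gamma e_{v}$ is local, and $\rad(\Gamma e_{v})=\jacrad(\Gamma)e_{v}=\sum_{t(a)=v}\Gamma\widetilde{a}$ because the $\widetilde{a}$ generate $\jacrad(\Gamma)$ on the left; this is (i), and (ii) is symmetric using right-hand generation. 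For arrow-distinctness, fix an arrow $a$ with $t(a)=v$; the surjection $\Gamma e_{h(a)}\to\Gamma\widetilde{a}$, $y\mapsto y\widetilde{a}$, of left modules shows $\Gamma\widetilde{a}$ is local with $\rad(\Gamma\widetilde{a})=\jacrad(\Gamma)\widetilde{a}$. If some $x$ lay in $\Gamma\widetilde{a}\cap\sum_{b\neq a,\,t(b)=v}\Gamma\widetilde{b}$ but outside $\rad(\Gamma\widetilde{a})$, then $\Gamma x=\Gamma\widetilde{a}$ by locality, so $\widetilde{a}\in\sum_{b\neq a,\,t(b)=v}\Gamma\widetilde{b}$; passing to $\jacrad(\Gamma)/\jacrad(\Gamma)^{2}$ and multiplying by $\overline{e_{h(a)}}$ on the left and $\overline{e_{v}}$ on the right would express the basis vector attached to $a$ as a $k$-linear combination of the remaining basis vectors of $\overline{e_{h(a)}}\bigl(\jacrad(\Gamma)/\jacrad(\Gamma)^{2}\bigr)\overline{e_{v}}$, a contradiction. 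This gives (iii), and (iv) is its right-module analogue.

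I expect the main obstacle to lie at two points that are easier in the field case. First, the generators $\widetilde{a}$ of $\jacrad(\Gamma)$ must serve simultaneously as left and as right generators; this is exactly what the hypothesis \emph{elementary} provides, via the splitness of $\overline{\Gamma}=k^{n}$, and without it one could not expect a path-algebra presentation over $R$ at all. Second, the surjectivity of $\phi$ cannot be obtained from nilpotency of $\jacrad(\Gamma)$, which already fails for B\"{a}ckstr\"{o}m orders over a discrete valuation ring; the remedy is to descend to the finite-dimensional algebra $\Gamma/\maxideal\Gamma$, apply the classical argument there, and then close the gap with Nakayama's lemma over $R$ — the step that genuinely uses module-finiteness of $\Gamma$.
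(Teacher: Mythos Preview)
Your proposal is correct and follows essentially the same approach as the paper's proof: the same quiver $Q$ built from $\dim_{k}\bigl(e_{j}(\jacrad/\jacrad^{2})e_{i}\bigr)$, the same map $\phi$ defined by lifting a $k$-basis of each Peirce component of $\jacrad/\jacrad^{2}$, the same surjectivity argument via reduction modulo $\maxideal\Gamma$ together with Nakayama over $R$ (the paper phrases this as an explicit iteration through $\jacrad^{c}$ using $\jacrad^{m}\subseteq\Gamma\maxideal$, which amounts to the same thing), and the same verification of the three conditions using linear independence of the lifted basis in $\jacrad/\jacrad^{2}$. Your treatment of arrow-distinctness via locality of $\Gamma\widetilde{a}$ is slightly slicker than the paper's coefficient computation, but the underlying idea is identical.
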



Let $\maxideal Q$ be the $R$-submodule of $RQ$ consisting of $R$-linear combinations of paths with coefficients in $\maxideal$. 
For each integer $\ell\geq 0$ let $A_{\ell}\subseteq RQ$ be the $R$-span of the paths of length $\ell$. 
Let $A=\bigoplus_{\ell\geq 1}A_{\ell}$.

\begin{itemize}
    \item $I$ is said to be \emph{admissible}  if it is \emph{bounded above} and \emph{bounded below}  in the following sense. 
        \begin{enumerate}[label={\upshape(\alph*)}]
        \item $I$ is said to be \emph{bounded above} if $I\subseteq A+ \maxideal Q$ and $I\cap A \subseteq A^{2}+ \maxideal Q\cap A$.
        \item $I$ is said to be \emph{bounded below} if $A^{m}\subseteq I+ \maxideal Q$ and $\maxideal Q\subseteq I+\sum_{\ell=1}^{n} A_{\ell}$ for some integers $m,n>0$. 
        \end{enumerate} 
    \end{itemize}
If  $I$ is admissible then $RQ/I\cong SQ/J$ where $S$ is a quotient of $R$ and  $J\triangleleft SQ$ is admissible in the stricter sense 
 used in \cite{CarSal1987}; see \Cref{prop-admissible-presentations-extra}.   
In \S\ref{quiver-noetherian-semiperfect} and \S\ref{sec-quiv-complete-case} we recall the \emph{Gabriel quiver} of a basic module-finite algebra over a complete local ring. 
See \Cref{def-gabriel-quiver}. 
It turns out that the notions of being arrow-radical, arrow-distinct and permissible combine to give sufficient conditions for an ideal to be admissible.  

\begin{thm}
\label{thm-admissible-presentations}
Let $I\triangleleft RQ$ be arrow-radical,  $\Lambda=RQ/I$ and  $\jacrad=\rad(\Lambda)$. 
The following statements hold. 
\begin{enumerate}
        \item $\Lambda$ is semiperfect and basic with pair-wise non-isomorphic local  idempotents $e_{v}+I$ \emph{(}$v\in Q_{0}$\emph{)}.  
    \item  $\Lambda$ is finitely generated as an $R$-module if and only if the ideal $I$ is bounded below. 
    \item If $I$ is arrow-distinct then  $I$ is permissible if and only if $I$ is bounded above. 
    \item If $I$ is bounded above  then $\Lambda/\jacrad\cong k^{\vert Q_{0}\vert}$ as rings and $\jacrad/\jacrad^{2}\cong k^{\vert Q_{1}\vert}$ as $k$-vector spaces. 
    \item If $I$ is admissible and $R$ is $\maxideal$-adically complete then $Q$ is the Gabriel quiver of $\Lambda$. 
\end{enumerate}
\end{thm}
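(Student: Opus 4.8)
The plan is to establish the five parts roughly in the order (1), (2), (3), (4), (5), harvesting structural facts as I go; throughout I write $\bar x$ for the image in $\Lambda$ of an element $x\in RQ$. The cornerstone is that every arrow lies in $\jacrad$: for an arrow $a$ and a maximal left ideal $\mathfrak{M}\triangleleft\Lambda$, either $\bar e_{t(a)}\in\mathfrak{M}$, so that $\bar a=\bar a\,\bar e_{t(a)}\in\mathfrak{M}$, or else $\mathfrak{M}\cap\Lambda\bar e_{t(a)}$ is a maximal submodule of the local module $\Lambda\bar e_{t(a)}$ and hence equals $\rad(\Lambda\bar e_{t(a)})=\sum_{t(b)=t(a)}\Lambda\bar b\ni\bar a$; either way $\bar a\in\jacrad$. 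Since then $\jacrad=\sum_v\jacrad\bar e_v=\sum_v\rad(\Lambda\bar e_v)=\sum_{a\in Q_1}\Lambda\bar a$, the ideal $\jacrad$ is exactly the image of $A$ and $\jacrad^{2}$ the image of $A^{2}$. For (1) I would note that the $\bar e_v$ form a complete set of orthogonal idempotents, and that each $\Lambda\bar e_v$ being local forces $\bar e_v$ to be primitive and $\bar e_v\Lambda\bar e_v=\End{\Lambda}(\Lambda\bar e_v)$ to be local: an endomorphism whose image is not inside the unique maximal submodule is surjective, so the corresponding element of $\bar e_v\Lambda\bar e_v$ has a left inverse there, and a ring in which every element outside a proper ideal is left-invertible is local. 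A ring with a complete orthogonal set of local idempotents is semiperfect; it is basic, and the $\Lambda\bar e_v$ are pairwise non-isomorphic because $\Hom{\Lambda}(\Lambda\bar e_v,\Lambda\bar e_w)\cong\bar e_v\Lambda\bar e_w$ consists, for $v\neq w$, of combinations of positive-length paths and so lies in $\jacrad$, which precludes an isomorphism.

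Next I would record that $\bar e_v\Lambda\bar e_v$ is spanned over $R$ by the images of closed paths at $v$, which apart from $\bar e_v$ all lie in $\jacrad$, so the division ring $D_v=\bar e_v\Lambda\bar e_v/\rad(\bar e_v\Lambda\bar e_v)$ is the image of $R$; being a commutative division ring it is $R/\mathfrak{p}$ for a maximal ideal $\mathfrak{p}=\maxideal$, so $D_v\cong k$. This gives the ring isomorphism $\Lambda/\jacrad\cong k^{\vert Q_0\vert}$ of the first clause of (4) and, as a by-product, $\maxideal\subseteq\jacrad$. For (2): if $I$ is bounded below then $A^{m}\subseteq I+\maxideal Q$ and $\maxideal Q\subseteq I+\sum_{\ell=1}^{n}A_\ell$ combine to $RQ=\sum_{\ell<\max(m,n+1)}A_\ell+I$, so $\Lambda$ is a quotient of a free $R$-module of finite rank; conversely, if $\Lambda$ is module-finite then $\Lambda/\maxideal\Lambda$ is a finite-dimensional $k$-algebra in which the image of $A$ is nilpotent (it sits in the radical), yielding $A^{m}\subseteq I+\maxideal Q$, and moreover $\jacrad$ — being the image of $A$ — is a finitely generated $R$-module since $R$ is noetherian, hence the image of $\sum_{\ell\le n}A_\ell$ for some $n$, so that $A\subseteq I+\sum_{\ell\le n}A_\ell$ and, using $\maxideal\bar e_v\subseteq\rad(\Lambda\bar e_v)\subseteq\jacrad$, one gets $\maxideal Q=\sum_v\maxideal e_v+\maxideal A\subseteq I+\sum_{\ell=1}^{n}A_\ell$.

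For (3) the implication ``bounded above $\Rightarrow$ permissible'' is a coefficient count (an arrow written inside $A^{2}+\maxideal Q\cap A$ would have a unit coefficient equal to an element of $\maxideal$), and $I\subseteq A+\maxideal Q$ holds already from arrow-radicality by the constant-term argument used for (4), so the substance is the inclusion $I\cap A\subseteq A^{2}+\maxideal Q\cap A$ under permissibility and arrow-distinctness. Given $x\in I\cap A$ with degree-one part $\sum_a s_a a$ and $s_{a_0}$ a unit, from $\bar x=0$ and the fact that $\jacrad^{2}$ is the image of $A^{2}$ I get $\sum_a s_a\bar a\in\jacrad^{2}$; right-multiplying by $\bar e_v$ for $v=t(a_0)$ and using $\jacrad^{2}\bar e_v=\jacrad\cdot\rad(\Lambda\bar e_v)=\sum_{t(b)=v}\rad(\Lambda\bar b)$, one puts $s_{a_0}\bar a_0$ in $\rad(\Lambda\bar a_0)+\sum_{b\neq a_0,\,t(b)=v}\Lambda\bar b$; then the difference of $s_{a_0}\bar a_0$ and a suitable element of $\rad(\Lambda\bar a_0)$ lies in $\Lambda\bar a_0\cap\sum_{b\neq a_0}\Lambda\bar b\subseteq\rad(\Lambda\bar a_0)$ by arrow-distinctness (iii), so $s_{a_0}\bar a_0\in\rad(\Lambda\bar a_0)=\jacrad\bar a_0$, forcing $\bar a_0=0$ and contradicting permissibility; hence every $s_a\in\maxideal$, i.e.\ $x\in A^{2}+\maxideal Q\cap A$. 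The second clause of (4) then follows: $\jacrad/\jacrad^{2}\cong A/(A^{2}+I\cap A)$, and under the projection $\pi\colon A\twoheadrightarrow A_1$ coming from $A=A_1\oplus A^{2}$ this becomes $A_1/\pi(I\cap A)$ with $A_1$ free of rank $\vert Q_1\vert$; here $\maxideal A_1=\pi(\maxideal A)\subseteq\pi(I\cap A)$ because $\maxideal A$ maps into $\jacrad^{2}$ (using $\maxideal\subseteq\jacrad$), while $I\cap A\subseteq A^{2}+\maxideal Q\cap A$ gives $\pi(I\cap A)\subseteq\maxideal A_1$, so $\jacrad/\jacrad^{2}\cong A_1/\maxideal A_1\cong k^{\vert Q_1\vert}$.

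Finally, for (5) I would feed (4) into the definition of the Gabriel quiver recalled in \Cref{def-gabriel-quiver}: its vertices are the isomorphism classes of simple $\Lambda$-modules, which by (4) are indexed by $Q_0$, and the number of arrows $v\to w$ is the $k$-dimension of the component $\bar e_w(\jacrad/\jacrad^{2})\bar e_v$, which by (4) equals $\#\{a\in Q_1:t(a)=v,\ h(a)=w\}$; the $\maxideal$-adic completeness of $R$ enters only through what that definition requires of the base ring (ensuring, e.g., the relevant idempotent lifts and the identification of $\jacrad/\jacrad^{2}$ with the arrow space). I expect the main obstacle to be the substantive direction of (3): getting the submodule bookkeeping around $\jacrad^{2}\bar e_v=\sum_{t(b)=v}\rad(\Lambda\bar b)$ exactly right and invoking arrow-distinctness in the correct variance, all while being scrupulous that identities such as $\rad M=\jacrad M$ are used only for finitely generated $M$ and never circularly with semiperfectness; part (5) is a close second, being hostage to the precise form of \Cref{def-gabriel-quiver}.
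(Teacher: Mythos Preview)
Your proposal is correct and follows essentially the same route as the paper: parts (1)--(4) are established through the same chain of lemmas (corresponding to Lemmas 4.11--4.16 in the paper), and (5) is deduced from (4) together with the identification of the arrow count in the Gabriel quiver with $\dim_k \bar e_w(\jacrad/\jacrad^2)\bar e_v$.

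One small but genuine difference is worth noting. For the first clause of (4), the paper (Lemma~4.13(1)) assumes $\maxideal Q\subseteq A+I$ in order to show that the surjection $R\to \bar e_v\Lambda\bar e_v/\bar e_v\jacrad\bar e_v$ has kernel exactly $\maxideal$. Your argument bypasses this hypothesis: having shown that $\bar e_v\Lambda\bar e_v$ is local (so the quotient is a division ring) and that $R$ surjects onto it, the kernel is forced to be the unique maximal ideal $\maxideal$. This yields $\Lambda/\jacrad\cong k^{|Q_0|}$ and $\maxideal\Lambda\subseteq\jacrad$ from arrow-radical alone, which is slightly sharper than the paper's statement and lets you feed $\maxideal\Lambda\subseteq\jacrad$ cleanly into (2) and into the $\maxideal A_1\subseteq\pi(I\cap A)$ step of the second clause of (4). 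Two cosmetic points: in (3), the passage from $s_{a_0}\bar a_0\in\jacrad\bar a_0$ to $\bar a_0=0$ is Nakayama on the cyclic module $\Lambda\bar a_0$ (worth saying explicitly); and in (5), the equality ``arrows $=\dim_k \bar e_w(\jacrad/\jacrad^2)\bar e_v$'' is not the literal definition but rather \Cref{cor-basic-biserial-quiver-is-biserial-plus-dim-1-nice}(2), which is where completeness of $R$ and the hypothesis $\bar e_i\Lambda\bar e_i/\bar e_i\jacrad\bar e_i\cong k$ are used.
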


Combining  \Cref{thm-characterising-semiperfect-mod-fin-alg-with-brick-simples} with \Cref{thm-admissible-presentations}, if $R$ is $\maxideal$-adicaly complete and $\Gamma$ is a basic module-finite elementary $R$-algebra  then $\Gamma$ has the form $RQ/I$ where $Q$ is the Gabriel quiver of $\Gamma$ and $I$ is admissible.  
Together with \Cref{prop-admissible-presentations-extra} this recovers \cite[Proposition, p.2619]{CarSal1987}; see \Cref{cor-raggi-card-salm}. 

\subsection{Biseriality} 

\emph{Uniserial} modules are those whose submodules form a chain. 
\emph{Biserial} modules are those with a pair of uniserial submodules whose sum is the entire module or the unique maximal submodule, and whose intersection is trivial or the unique simple submodule.   
Following\footnote{The notion from \cite{kirichenko-kostyukevich} generalised the notion from artinian rings introduced by Fuller \cite{Ful1978}. }  Kiri\v{c}enko and Kostyukevich \cite{kirichenko-kostyukevich} a noetherian semiperfect ring is \emph{biserial} if every indecomposable projective module is biserial.  
See \S\ref{subsec-biserial}.

We keep using $R$, $Q$ and $I$ to denote a  local ring, a quiver and an ideal in the path algebra $RQ$. 
A \emph{subpath} of a non-trivial path $p=a_{n}\dots a_{1}$ is a path of the form $a_{j}\dots a_{i}$ with $n\geq j\geq i\geq 1$. 
In case $i=1$ they are \emph{right subpaths}\footnote{The terminology of left and right subpaths was used by Huisgen-Zimmermann and Smal\o{} in  \cite{HuiSma2005}. } and in case $j=n$ they are \emph{left subpaths}. 
The \emph{right arrow} of $p$ is $a_{1}$ and the \emph{left arrow} is $a_{n}$.   
An \emph{inadmissible} path is one that lies in $I$. 
The paths lying outside $I$ are  \emph{admissible}.  



\begin{itemize}
     \item $I$ is said to be \emph{special}\footnote{This terminology is motivated by work of  Skowro{\'n}ski and Waschb{\"u}sch \cite{SkoWas1983}.} if for any arrow $b$ there exists at most $1$ arrow $a$ such that $t(a)=h(b)$ and  $ab$ is admissible, and there exists at most $1$ arrow $c$ such that  $h(c)=t(b)$ and  $bc$ is admissible.
\end{itemize}


\begin{thm}
\label{thm-main-string-biserial-multi}
Let  $I$ be arrow-radical, bounded below and special. 
Let $p$ be an admissible non-trivial path with left arrow $b$ and right arrow $a$. 
The following statements hold for the ring $\Lambda=RQ/I$. 
\begin{enumerate}
     \item Any non-trivial left submodule of $\Lambda p$ has the form $\Lambda q$ for an admissible path $q$ with right subpath $p$.  
     \item Any non-trivial right submodule of $p\Lambda $ has the form $q\Lambda$ for an admissible path $q$ with left subpath $p$.  
     \item If $q$ is an admissible path with right arrow $a$ and $\Lambda p=\Lambda q$  then $p=q$. 
     \item If $q$ is an admissible path with left arrow $b$ and $p\Lambda =q\Lambda $  then $p=q$.
\end{enumerate} 
\end{thm}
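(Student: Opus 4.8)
The plan is to deduce (2) and (4) from (1) and (3) by passing to the opposite algebra $\Lambda^{\mathrm{op}}=RQ^{\mathrm{op}}/I^{\mathrm{op}}$: being arrow-radical, bounded below and special are left--right symmetric conditions, a non-trivial left submodule of $\Lambda p$ corresponds (via the anti-isomorphism $RQ\to RQ^{\mathrm{op}}$) to a non-trivial right submodule of the module associated with $p$ over $\Lambda^{\mathrm{op}}$, right subpaths become left subpaths, and the left and right arrows of $p$ exchange roles. So I only treat (1) and (3).

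The heart of (1) is that $\Lambda p$ is uniserial, with radical layers governed by the admissible left-extensions of $p$. First, using that $I$ is special, the admissible paths having $p$ as a right subpath form a single chain $p=p^{(0)},p^{(1)},p^{(2)},\dots$ (possibly infinite), where $p^{(i+1)}=c\,p^{(i)}$ for the unique arrow $c$ with $t(c)=h(p^{(i)})$ and $c\,p^{(i)}$ admissible; uniqueness holds because $c\,p^{(i)}$ admissible forces its left subpath of length $2$ to be admissible, and the special condition applied to the left arrow of $p^{(i)}$ then permits at most one such $c$. Next I would show $\rad(\Lambda p^{(i)})=\Lambda p^{(i+1)}$ for every $i$ (with $\Lambda p^{(i+1)}:=0$ if the chain stops at $i$): right multiplication by $p^{(i)}$ yields a surjection $\Lambda e_{h(p^{(i)})}\twoheadrightarrow\Lambda p^{(i)}$, so by arrow-radicality $\Lambda p^{(i)}$ is local with radical $\sum_{t(c)=h(p^{(i)})}\Lambda c\,p^{(i)}$, and every summand vanishes in $\Lambda$ except the one coming from the extending arrow. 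Iterating gives $\rad^{i}(\Lambda p)=\Lambda p^{(i)}$ for all $i$. I would then argue $\bigcap_{i}\rad^{i}(\Lambda p)=0$: since $I$ is arrow-radical and bounded below, $\Lambda$ is module-finite over $R$ by \Cref{thm-admissible-presentations}(2), hence noetherian, and $\Lambda p$ is a finitely generated $R$-module; writing $\rad^{i}(\Lambda p)=\jacrad^{i}\Lambda p$ and using $\jacrad^{c}\subseteq\maxideal\Lambda$ for some $c$ (as $\Lambda/\maxideal\Lambda$ is a finite-dimensional $k$-algebra), we get $\bigcap_{i}\rad^{i}(\Lambda p)\subseteq\bigcap_{i}\maxideal^{i}\Lambda p=0$ by Krull's intersection theorem. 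Finally, for a non-zero submodule $N\subseteq\Lambda p$, choose $i$ largest with $N\subseteq\rad^{i}(\Lambda p)=\Lambda p^{(i)}$; then $N$ is not contained in $\rad(\Lambda p^{(i)})$, the unique maximal submodule of the local module $\Lambda p^{(i)}$, so $N=\Lambda p^{(i)}$, which is as required since $p^{(i)}$ is admissible with right subpath $p$.

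For (3), I would first record that an admissible path is determined by its right arrow together with its length: if $r=c_{s}\cdots c_{1}$ is admissible with $c_{1}=a$, then each right subpath $c_{i}\cdots c_{1}$ is admissible, and the special condition (applied to the left arrow $c_{i}$) forces $c_{i+1}$ to be the unique arrow left-extending it to an admissible path, so, in the notation of (1) applied to the path $a$, one gets $c_{i}\cdots c_{1}=a^{(i-1)}$, hence $r=a^{(s-1)}$. Thus $p=a^{(|p|-1)}$ and $q=a^{(|q|-1)}$, and by the radical computation above $\Lambda p=\rad^{|p|-1}(\Lambda a)$ and $\Lambda q=\rad^{|q|-1}(\Lambda a)$. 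If $|p|\neq|q|$, say $|p|<|q|$, then $\Lambda p=\Lambda q\subseteq\rad(\Lambda p)$, which forces $\Lambda p=0$ by Nakayama --- impossible. Hence $|p|=|q|$ and therefore $p=q$.

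The main obstacle is establishing the uniseriality of $\Lambda p$. The identity $\rad(\Lambda p^{(i)})=\Lambda p^{(i+1)}$ is where the special and arrow-radical hypotheses enter in an essential way, and the vanishing $\bigcap_{i}\rad^{i}(\Lambda p)=0$ genuinely needs module-finiteness over the noetherian ring $R$ (through Krull's theorem), since the chain of left-extensions of $p$ may be infinite. Once these are in place, parts (1) and (3) follow quickly and (2), (4) are formal.
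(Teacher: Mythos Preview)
Your proof is correct and follows essentially the same route as the paper. The paper packages the argument into separate lemmas: one (\Cref{lem-initial-termina-subpaths-special}) showing that admissible paths with a fixed right arrow form a chain under the right-subpath order, one (\Cref{lem-left-uniserial-cyclic-modules-from-admissible-paths}) proving part~(1) by choosing the longest admissible extension $q$ of $p$ with $M\subseteq\Lambda q$ and deriving a contradiction from $M\subsetneq\Lambda q$, and one (\Cref{lem-left-uniserial-cyclic-modules-uniqueness}) proving part~(3) by writing $p=p'q$ and observing that $q\in\Lambda p'q$ forces $e_{h(q)}\in\jacrad e_{h(q)}$ when $p'$ is non-trivial. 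Your version front-loads the computation $\rad^{i}(\Lambda p)=\Lambda p^{(i)}$ and then reads off both (1) and (3) from this radical filtration; the paper's maximality argument in \Cref{lem-left-uniserial-cyclic-modules-from-admissible-paths} is exactly your ``largest $i$ with $N\subseteq\Lambda p^{(i)}$'' in disguise. The key ingredients---the chain structure from specialness, the radical description from arrow-radicality, and the Krull intersection property (the paper invokes Schelter's non-commutative version, you reduce to the commutative case via $\jacrad^{c}\subseteq\maxideal\Lambda$, which is equivalent here)---appear in both in the same way.
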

Hence the modules of the form $\Lambda p$ and $p\Lambda$ appearing in \Cref{thm-main-string-biserial-multi} are uniserial.

\begin{itemize}
    \item $I$ is said to be   \emph{arrow}-\emph{direct} if (v) and (vi) below hold for any arrow $a$. 
        \begin{enumerate}[label={\upshape(\roman*)}]
        \setcounter{enumi}{4}
        \item $\Lambda a\cap \sum \Lambda b=0$ where the sum runs through the arrows $b\neq a$ with $t(b)=t(a)$. 
        \item $a\Lambda \cap\sum b\Lambda =0$ where the sum runs through the arrows $b\neq a$ with $h(b)=h(a)$.
        \end{enumerate}
\end{itemize} 


\begin{defn}
\label{defn-string-pairs-and-algebras}
A module-finite $R$-algebra   $RQ/I$ is a \emph{string algebra over} $R$ if $I$ is arrow-radical, arrow-direct, permissible and  special and if each vertex is the head (respectively, tail) of at most $2$ arrows.  
\end{defn}
If $\Lambda=RQ/I$  is a string algebra over $R$ then $I$ is admissible. 
If also $R$ is complete then $Q$ is the Gabriel quiver of $\Lambda$. 
See \Cref{thm-admissible-presentations}. 
A string algebra over a field (in the sense of \Cref{defn-string-pairs-and-algebras}) is the same thing as a \emph{string algebra}\footnote{Here we assume quivers are finite and hence we only consider the  rings from  \cite{ButRin1987} that are unital.} in the sense of  \cite{ButRin1987}. 
See \Cref{prop:butler-ringel-string-for-a-field}. 
In \S\ref{section-examples} we look at other examples.

\begin{thm}
\label{thm-main-string-biserial}
If $\Lambda$ is a string algebra  over $R$ then $\Lambda$ is biserial as a semiperfect noetherian ring. 
Furthermore the following statements hold for any admissible non-trivial path $p$. 
\begin{enumerate}
     \item If $p$ has left arrow $b$ and right arrow $a$ then $\jacrad p\cap b\Lambda\subseteq p\jacrad$ and $p\jacrad\cap \Lambda a\subseteq \jacrad p$  where  $\jacrad=\rad(\Lambda)$. 
     \item The kernel of the projective cover of $\Lambda p$ is a direct sum of at most $2$ modules each of the form $\Lambda q$ where $q$ is an admissible path of  minimal length such that $qp$ is inadmissible. 
     \item The kernel of the projective cover of $p\Lambda $ is a direct sum of at most $2$ modules each of the form $q\Lambda $ where $q$ is an admissible path of  minimal length  such that $pq$ is inadmissible.
\end{enumerate} 
\end{thm}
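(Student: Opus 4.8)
The plan is to extract biseriality and statements (1)--(3) from the structure of the radicals of the indecomposable projectives, leaning on the uniseriality already recorded in \Cref{thm-main-string-biserial-multi} and the remark following it. Note first that a string algebra over $R$ is module-finite, so \Cref{thm-admissible-presentations}(2) makes $I$ bounded below; since $I$ is also arrow-radical and special, \Cref{thm-main-string-biserial-multi} is available throughout.

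\textbf{Biseriality.} By arrow-radicality and \Cref{thm-admissible-presentations}(1), the indecomposable projective left $\Lambda$-modules are the local modules $\Lambda e_{v}$ with $\rad(\Lambda e_{v})=\sum_{t(a)=v}\Lambda a$, a sum over the at most two arrows out of $v$; each $\Lambda a$ is uniserial by \Cref{thm-main-string-biserial-multi} (applied to the admissible non-trivial path $p:=a$, which is admissible by permissibility). When $v$ is the tail of two arrows $a_{1},a_{2}$, arrow-directness (v) gives $\Lambda a_{1}\cap\Lambda a_{2}=0$, so $\rad(\Lambda e_{v})=\Lambda a_{1}\oplus\Lambda a_{2}$ is a sum of two uniserial submodules equal to the unique maximal submodule of $\Lambda e_{v}$ with trivial intersection; otherwise $\Lambda e_{v}$ is itself uniserial. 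Either way $\Lambda e_{v}$ is biserial, and the symmetric clauses (arrow-radical (ii), arrow-direct (vi), \Cref{thm-main-string-biserial-multi}(2)) treat the indecomposable projective right modules, so $\Lambda$ is biserial as a semiperfect noetherian ring.

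\textbf{Statements (2) and (3).} Let $p$ be admissible and non-trivial, $v=h(p)$. The projective cover of $\Lambda p$ is the right-multiplication map $\varrho\colon\Lambda e_{v}\to\Lambda p$, $y\mapsto yp$; it is onto, and since $\Lambda e_{v}$ is local with $\Lambda p\neq0$ its kernel $K$ lies in $\rad(\Lambda e_{v})=\bigoplus_{t(a)=v}\Lambda a$. Specialness applied to the left arrow $b$ of $p$ shows at most one arrow $a$ with $t(a)=v$ has $ap$ admissible (an inadmissible length-$2$ left subpath makes the whole path inadmissible). If $ap$ is inadmissible then $ap=0$ in $\Lambda$, so $\Lambda a\subseteq K$; consequently $K$ splits along $\rad(\Lambda e_{v})=\Lambda a_{1}\oplus\Lambda a_{2}$, i.e. $K=\bigoplus_{i}(K\cap\Lambda a_{i})$, for writing $\xi=\xi_{1}+\xi_{2}\in K$ with $\xi_{i}\in\Lambda a_{i}$ and $a_{2}p$ inadmissible forces $\xi_{2}p=0$, whence $\xi_{1}\in K$. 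For such an $a_{i}$ one has $K\cap\Lambda a_{i}=\Lambda a_{i}=\Lambda q_{i}$ with $q_{i}=a_{i}$; for the remaining arrow $a_{i}$ (if any, with $a_{i}p$ admissible) $\varrho$ restricts to a surjection $\Lambda a_{i}\twoheadrightarrow\Lambda(a_{i}p)$ of uniserial modules (\Cref{thm-main-string-biserial-multi}), so its kernel $K\cap\Lambda a_{i}$ is again of the form $\Lambda q_{i}$, namely $\Lambda q_{i}$ for the shortest admissible path $q_{i}$ with right arrow $a_{i}$ and $q_{i}p$ inadmissible (or the kernel is $0$, if no such $q_{i}$ exists because $q_{i}p$ stays admissible along the chain of left extensions of $a_{i}$). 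Collecting the at most two surviving summands proves (2); statement (3) is the left--right dual, using the projective cover $z\mapsto pz$ of $p\Lambda$ together with arrow-radical (ii), arrow-direct (vi) and \Cref{thm-main-string-biserial-multi}(2),(4).

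\textbf{Statement (1) and the main obstacle.} For the first inclusion I would reduce to a statement about paths: arrow-radicality gives $\jacrad p=\big(\sum_{t(c)=h(p)}\Lambda c\big)p$, and by specialness only the unique arrow $c_{0}$ with $c_{0}p$ admissible contributes, so $\jacrad p=\Lambda c_{0}p$ (or $\jacrad p=0$, when the claim is trivial); moreover $\jacrad p$ is the $R$-span of the admissible paths having $p$ as a proper right subpath and $b\Lambda$ the $R$-span of the admissible paths with left arrow $b$. The heart of the matter is a rigidity lemma: if $q$ is an admissible path having $p$ as a proper right subpath and left arrow $b$, then $p$ is also a \emph{left} subpath of $q$, so $q\in p\jacrad$. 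This is proved by an inward induction from the left end of $q$: its left arrow is $b=a_{n}$, and once the outermost arrows of $q$ agree with $a_{n},\dots,a_{n-i}$, the ``at most one $c$ with $bc$ admissible'' clause of specialness applied to $a_{n-i}$ forces the next arrow of $q$ to equal $a_{n-i-1}$; the shorter ``periodic'' case is handled the same way and yields the periodicity of $p$ needed to conclude. Combining this with the elementary observation $\rad(\Lambda p)\cap p\Lambda\subseteq p\jacrad$ (if $py\in\rad(\Lambda p)$ then $y\in\jacrad$, since otherwise $py$ would generate $\Lambda p$) gives $\jacrad p\cap b\Lambda\subseteq p\jacrad$; the inclusion $p\jacrad\cap\Lambda a\subseteq\jacrad p$ is its left--right dual. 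The delicate step, which I expect to be the main obstacle, is passing from this rigidity statement about individual admissible paths to arbitrary $R$-linear combinations in $\jacrad p\cap b\Lambda$, since over a non-field $R$ the admissible paths need only span $\Lambda$, not freely. I anticipate handling this by tracking the $R$-coefficients through the reduction $x=y\,c_{0}p$ (using that $R$ is central, so $\maxideal p=p\maxideal\subseteq p\jacrad$), or by base-changing to the $\maxideal$-adic completion, proving the inclusion for $\widehat{\Lambda}=\Lambda\otimes_{R}\widehat{R}$ and descending via $\Lambda\hookrightarrow\widehat{\Lambda}$ and $p\,\widehat{\jacrad}\cap\Lambda=p\jacrad$ (Krull intersection and separatedness of $\Lambda/p\jacrad$).
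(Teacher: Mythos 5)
Your biseriality argument and your treatment of parts (2) and (3) are correct and essentially the paper's own route: the radical of each $\Lambda e_{v}$ is a sum of at most two modules $\Lambda a$, uniserial by \Cref{thm-main-string-biserial-multi}, with trivial pairwise intersection by arrow-directness, and your computation of the kernels in (2) and (3) is the content of \Cref{lem-projective-covers-and-their-kernels-for-uniserial-left-modules} and \Cref{lem-projective-covers-and-their-kernels-for-uniserial-right-modules}. The genuine gap is in part (1), and it sits exactly at the step you defer. An element $x\in\jacrad p\cap b\Lambda$ is an $R$-linear combination of cosets of admissible paths having $p$ as a proper right subpath, but these cosets satisfy relations over $R$ that mix paths with different left arrows (in \Cref{running-2-by-2-example-part-1}, for instance, $\pi e+I=ab+ba+I$ with $e=e_{u}$), so knowing $x\in b\Lambda$ gives no control over the individual paths appearing in a chosen expression; ``tracking the coefficients'' runs straight into these relations, passing to the $\maxideal$-adic completion does not make the path cosets any freer, and the descent equality $p\widehat{\jacrad}\cap\Lambda=p\jacrad$ you would need is itself unproved. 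Your auxiliary claim $\rad(\Lambda p)\cap p\Lambda\subseteq p\jacrad$ is also not justified as stated: ``$py\in\rad(\Lambda p)$ forces $y\in\jacrad$'' already fails for $y=e_{w}$ with $w\neq t(p)$, and nothing in your sketch shows that $py$ outside $p\jacrad$ would generate.

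The paper closes this gap in \Cref{uniserial-module-left-right-compatability-one-side} and \Cref{uniserial-module-left-right-compatability-other-side}, and the missing ingredient is arrow-directness, which your outline for (1) never invokes. Keep $x$ in the factorised form $x=\lambda p$ with $\lambda\in\jacrad$, write $\lambda=\sum_{q}r_{q}q+I$ over non-trivial admissible paths $q$ with $h(q)=h(b)$, and let $P$ be the set of such $q$ with left arrow $b$, tail $h(p)$ and $qp$ admissible. Setting $\mu=\sum_{q\notin P}r_{q}q+I$, every term of $\mu p$ either vanishes (when $qp$ is inadmissible or $t(q)\neq h(p)$) or has left arrow different from $b$, so $\mu p\in\sum_{c\neq b}c\Lambda$; on the other hand $\mu p=x-\sum_{q\in P}r_{q}qp+I\in b\Lambda$, so $\mu p=0$ by condition (vi) of \Cref{defn-arrow-direct} --- no linear independence of path cosets is needed. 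For the surviving terms your rigidity lemma, which is exactly \Cref{lem-initial-termina-subpaths-special}(2) and need not be reproved, gives $qp=pq'$ with $q'$ non-trivial, whence $x\in p\jacrad$ (using \Cref{lem-right-uniserial-cyclic-radicals} if you want the sharper form $p\jacrad=px\Lambda$). Until you either adopt this intersection-and-kill argument or supply a worked-out substitute for the passage from individual paths to arbitrary elements, your proof of (1) is incomplete.
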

Note that Roggenkamp and Wiedemann \cite[Theorem 3]{RogWie1983} related the global dimension of an order with properties of the kernel of the projective cover of its jacobson radical. 

In \S\ref{section-background} and \S\ref{sec-module-finite-algebras} we survey properties of noetherian semiperfect rings,  module-finite algebras over local rings and their quivers. 
In \S\ref{section-definition} we look at admissibility and prove \Cref{thm-characterising-semiperfect-mod-fin-alg-with-brick-simples} and \Cref{thm-admissible-presentations}. 
In \S\ref{sec-stringalgebras-over-local} we look at biseriality and prove \Cref{thm-main-string-biserial-multi} and \Cref{thm-main-string-biserial}. 
In \S\ref{section-examples} we discuss examples of string algebras over local rings that arise as orders  considered previously in the literature \cite{BurDro2006,Dro2001,Fields-Examples-1969,Gab1987,GelPon1968,Rin1975}. 

\section{Noetherian semiperfect basic rings}
\label{section-background}

In  \S\ref{sec-module-finite-algebras} we collect  properties of module-finite algebras over local noetherian ground rings. 
Such algebras are noetherian and semilocal, and semiperfect in case $R$ is $\maxideal$-adically complete. 
Thus in \S\ref{subsection-semiperfect-background} we recall properties of semiperfect rings, and in \S\ref{subsection-noetherian-semiperfect} we focus on semiperfect rings that are also noetherian. 
We use the books by Gubareni, Hazewinkel and Kiri\v{c}enko \cite{Haze2004} and by Lam \cite{Lam1991}. 

\subsection{Background}
\label{subsection-semiperfect-background} 
We begin with a set up for \S\ref{section-background} that we gradually  add restrictions to. 

\begin{setup}
\label{ring-setup}

Fix a (unital, associative) ring $\Lambda$. 
For any integer $n>0$ write  $\boldsymbol{\mathrm{M}}_{n}(\Lambda)$ for the ring of 
 $n$-by-$n$ square matrices $\left(x_{ij}\right)$ with entries $x_{ij}\in \Lambda$. 
Let $\jacrad=\rad(\Lambda)$, the jacobson radical of $\Lambda$. 
Let $\lMod{\Lambda}$ be the category of left $\Lambda$-modules and $\rMod{\Lambda}$ the category of right $\Lambda$-modules. 
\end{setup}

\subsubsection{Idempotents} 
\label{subsubsec-idempotents}

An idempotent $e\in \Lambda$ is: \emph{primitive} if $e\neq f+g$ for all non-zero orthogonal idempotents $f$ and $g$, or equivalently if $\Lambda e$ is indecomposable; \emph{local}  
if $\Lambda e$ has unique maximal submodule $\jacrad e$, or equivalently if $\Lambda e/\jacrad e$ is simple; and \emph{isomorphic} to an idempotent $f$ if $\Lambda e\cong \Lambda f$, or equivalently if $e=ab$ and $f=ba$ for some $a\in e\Lambda f$ and some $b\in f\Lambda e$. 
So if $e$ is local then the quotient $\Lambda e\to \Lambda e/\jacrad e$ is a projective cover of a simple. 
Dual claims hold for right modules. 
See \cite[Propositions 21.8, 21.9, 21.18, 24.10]{Lam1991}. 

A set $\boldsymbol{e}=\{e_{1},\dots,e_{n}\}$ of idempotents $e_{i}\in \Lambda$ is said to be: \emph{complete} if $1=e_{1}+\dots +e_{n}$; and   \emph{orthogonal} if $e_{i}e_{j}=0=e_{j}e_{i}$  whenever $i\neq j$. 
If such a set $\boldsymbol{e}$ consists of isomorphic idempotents then $e=e_{1}+\dots+e_{n}$ satisfies  $e\Lambda e\cong \boldsymbol{\mathrm{M}}_{n}(e_{i}\Lambda e_{i})$ for any $i$.  
Fix complete and orthogonal sets $\boldsymbol{e}=\{e_{1},\dots,e_{n}\}$ and $\boldsymbol{f}=\{f_{1},\dots,f_{m}\}$ of idempotents. 
If $n=m$ and if $e_{i}$ and $f_{i}$ are isomorphic for each $i$ then there exists a unit $x\in \Lambda$ with $f_{i}=x^{-1}e_{i}x$ for each $i$. 
If $e_{i},f_{i}$ are local for each $i$ then $n=m$ and conjugating by some unit $x$  gives a bijection $\boldsymbol{e}\to \boldsymbol{f}$.  
See  \cite[p. 334, Exercises 15--18]{Lam1991} and also \cite[p. 58]{jacobson-structure-of-rings}.

\subsubsection{Semiperfect rings}
\label{sec-projective-modules-over-semiperfect}

One says that \emph{idempotents lift modulo the radical} provided that for any $e\in \Lambda$ such that $e+\jacrad$ is idempotent (so  $e^{2}-e\in \jacrad $)  then there is an idempotent  $f\in \Lambda$ such that $e-f\in \jacrad $. 
Recall that $\Lambda$ is a \emph{semilocal} ring provided  $\Lambda/\jacrad$ is an artinian ring, and hence a semisimple artinian ring. 
In this case  $\jacrad M$ is the radical of any left  $\Lambda$-module $M$, and dually for right modules; see for example \cite[Proposition 24.4]{Lam1991}. 
A semilocal ring $\Lambda$ is  \emph{semiperfect} provided   idempotents lift modulo the radical. 

Note that any local ring is semiperfect, and any artinian ring is semiperfect. 
Of particular interest later is that any module-finite algebra over a complete local noetherian ground ring is semiperfect; see \S\ref{sec-module-finite-algebras}.  
By a famous result of Bass \cite[Theorem 2.1]{Bas1960}, saying that $\Lambda$ is semiperfect is equivalent to saying that any finitely generated $\Lambda$-module\footnote{Note that we ignore specifying left or right modules here. 
The definition of semiperfect rings we use is symmetric. 
The result of Bass says that such a specification is unnecessary if  defined in terms of covers of finitely generated modules.} $M$ has a projective cover $P\to M$. In this case $P$ must also be finitely generated by the theory of projective covers. 
By a result of M{\"u}ller  \cite[Theorem 1]{Mueller-on-semiperfect-rings} the ring $\Lambda$ is semiperfect if and only if there exists a complete and orthogonal set  $\boldsymbol{e}=\{e_{1},\dots,e_{n}\}$ of local idempotents.

\subsubsection{Basic rings}
\label{basic rings}


Let $\Lambda$ be semiperfect with a complete and orthogonal set $\boldsymbol{e}=\{e_{1},\dots,e_{n}\}$ of local idempotents. 
Partition $ \boldsymbol{e}=\boldsymbol{c}(1)\sqcup \dots\sqcup \boldsymbol{c}(m)$ into isomorphism classes $\boldsymbol{c}(j)$. 
Hence if $h\neq i$, $e_{h}\in \boldsymbol{c}(l)$ and $e_{i}\in\boldsymbol{c}(j)$ one has  $l\neq j$ if and only if  $\Lambda e_{h}\ncong \Lambda e_{i}$ if and only if   $e_{h}\Lambda \ncong e_{i}\Lambda$. 
 
For each $j$  the sum of the elements of $\boldsymbol{c}(j)$ defines a new idempotent $f_{j}$.  
 Running over $j$ defines a complete and orthogonal set $\boldsymbol{f}=\{f_{1},\dots,f_{m}\}$ of idempotents. 
This gives  \emph{Peirce decompositions}
\[
\begin{array}{cc}
\Lambda=\begin{pmatrix}
f_{1}\Lambda f_{1}  & \cdots & f_{1}\Lambda f_{m} 
\\
\vdots & \ddots & \vdots
\\
f_{m}\Lambda f_{1} & \cdots & f_{m}\Lambda f_{m}
\end{pmatrix}, & 
\jacrad  =\begin{pmatrix}
f_{1}\jacrad  f_{1} & \cdots & f_{1}\Lambda f_{m} \\
\vdots & \ddots & \vdots
\\
f_{m}\Lambda f_{1} & \cdots & f_{m}\jacrad f_{m}
\end{pmatrix}.
\end{array}
\]
See \cite[Proposition 11.1.1]{Haze2004}. 
The semiperfect ring $\Lambda$ is \emph{basic} if $\Lambda/\jacrad $ is isomorphic to a product of division rings, or equivalently, if  
no pair of distinct elements of $\boldsymbol{e}$  are isomorphic. 
In this case $n=m$ and there is a bijection  $\boldsymbol{e}\to \boldsymbol{f}$ defined by conjugating with some unit; see \cite[Proposition 25.10]{Lam1991}. 

For each $j=1,\dots,m$ make a choice of $i(j)=1,\dots,n$ such that $e_{i(j)}\in \boldsymbol{c}(j)$, and then let $e=\sum_{j=1}^{m}e_{i(j)}$. 
 Then $ e\Lambda e\cong\End{\lMod{\Lambda}}(P)$ where $P=\bigoplus_{j=1}^{m}\Lambda e_{i(j)}$. Note that $e\Lambda e$ is: Morita equivalent to $\Lambda$, see for example \cite[\S3]{Morita-duality-modules-rings-with-min-condtions} or \cite[Chapters II, III]{Bass-algebraic-K-theory}; hence   semiperfect by \cite[Theorem 2.1]{Bas1960}; and basic by construction.

\subsubsection{Projectives}
\label{sec-proj-mods}
Let $\Lambda$ be semiperfect with a complete and orthogonal set $\{e_{1},\dots,e_{n}\}$ of local idempotents. 

By a \emph{principal local} left module we mean one of the form $\Lambda e_{i}$ with $i=1,\dots,n$. 
Hence principal local left modules have local endomorphism rings. 
Likewise \emph{principal local} right modules have the form $e_{i}\Lambda$. 
Either for left modules or for right modules, in the notation of \S\ref{basic rings} for each $j=1,\dots,m$ let $P_{j}$ be the principal local module corresponding to the chosen idempotent  $e_{i(j)}\in \boldsymbol{c}(j)$, and let $S_{j}=P_{j}/\rad(P_{j})$ be the simple quotient.  
Hence the quotient map $P_{j}\to S_{j}$ is the projective cover. 

Any projective indecomposable is isomorphic to some $P_{j}$ where $j$ is unique since the idempotents $f_{1},\dots,f_{m}$ are pair-wise non-isomorphic. 
Furthermore $P_{j}\mapsto S_{j}$  defines a bijection between isoclasses of projective indecomposables and simples. 
Moreover, 
say following a result of P\v{r}\'{\i}hoda \cite[Corollary 2.4(iii)]{Pri2007}, every projective $\Lambda$-module is isomorphic to a (possibly infinite) direct sum of principal local modules. 
By the Krull--Remak--Schmidt--Azumaya decomposition theorem \cite[Theorem 1]{Azumaya-KRS}, since principal local modules are indecomposable with local endomorphism rings, any such direct sum decomposition of a projective module is unique up to an isoclass preserving bijection between principal local modules.

\subsubsection{Covers}
\label{sec-covers}
\Cref{lem-proj-covers-ext-between-simples} is a well-known result that we use in \S\ref{quiver-noetherian-semiperfect}. 
Let $M$, $P$ and $S$ be left $\Lambda$-modules where $S$ is simple and where $\mathsf{p}\colon P\to M$ is a projective cover. 
    Then any $\mathsf{h}\in \Hom{\lMod{\Lambda}}(P,S)$ has the form $\mathsf{h}=\mathsf{m}\mathsf{p}$ for some $\mathsf{m}\in \Hom{\lMod{\Lambda}}(M,S)$. 
    We  use this idea repeatedly in what follows.  


\begin{lem}
    \label{lem-proj-covers-ext-between-simples}
    \emph{(c.f \cite[\S2]{he-ye-yoneda-ext}). } 
    Let $\Lambda $ be a semiperfect ring with a complete and orthogonal set $\boldsymbol{e}$ of local idempotents. 
    Let $e,f\in\boldsymbol{e}$, $S=\Lambda e/\jacrad e$,   $T=\Lambda f/\jacrad f$ and  $U=\jacrad f$. 
    If $U$ is finitely generated and $P\to U$ is a projective cover with kernel $K$ then there are left $\End{\lMod{\Lambda}}(S)$-module isomorphisms 
    \[
\Hom{\lmod{\Lambda}}(P,S) \cong \Hom{\lmod{\Lambda}}(U,S)\cong \Ext{1}{\lmod{\Lambda}}(T,S),
\,
\Hom{\lmod{\Lambda}}(K,S)\cong \Ext{1}{\lmod{\Lambda}}(U,S) \cong \Ext{2}{\lmod{\Lambda}}(T,S).
    \]
\end{lem}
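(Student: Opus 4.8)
The plan is to construct all five isomorphisms by the standard device recalled just before the statement: if $\mathsf{p}\colon P \to M$ is a projective cover and $S$ is simple, then every map $P \to S$ factors through $\mathsf{p}$, and since $\ker(\mathsf{p})$ is superfluous in $P$ this factorisation identifies $\Hom{\lmod{\Lambda}}(P,S)$ with $\Hom{\lmod{\Lambda}}(M,S)$. First I would apply this with $M = U = \jacrad f$ and the given projective cover $P \to U$, which immediately yields $\Hom{\lmod{\Lambda}}(P,S) \cong \Hom{\lmod{\Lambda}}(U,S)$. The same principle, applied to the projective cover $\Lambda f \to T = \Lambda f/\jacrad f$ whose kernel is $U$, gives the comparison with $\Ext{1}{\lmod{\Lambda}}(T,S)$; here one takes the long exact sequence of $\Hom{\lmod{\Lambda}}(-,S)$ associated to $0 \to U \to \Lambda f \to T \to 0$ and observes that $\Hom{\lmod{\Lambda}}(\Lambda f, S) = e^{?}$-type evaluation is either $0$ or $\End{\lmod{\Lambda}}(S)$ according to whether $f \not\cong e$ or $f \cong e$, while the connecting map $\Hom{\lmod{\Lambda}}(U,S) \to \Ext{1}{\lmod{\Lambda}}(T,S)$ is then an isomorphism onto $\Ext{1}{\lmod{\Lambda}}(T,S)$ because $\Lambda f$ is projective so $\Ext{1}{\lmod{\Lambda}}(\Lambda f,S)=0$, and the preceding term $\Hom{\lmod{\Lambda}}(\Lambda f, S) \to \Hom{\lmod{\Lambda}}(U,S)$ is zero since any map $\Lambda f \to S$ kills $\jacrad f = U$.

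For the second row, I would repeat the argument one homological degree up. The projective cover $P \to U$ with kernel $K$ gives a short exact sequence $0 \to K \to P \to U \to 0$ with $P$ projective; applying $\Hom{\lmod{\Lambda}}(-,S)$ and using $\Ext{1}{\lmod{\Lambda}}(P,S) = 0$ produces a connecting isomorphism $\Hom{\lmod{\Lambda}}(K,S)/\operatorname{im}\bigl(\Hom{\lmod{\Lambda}}(P,S)\bigr) \xrightarrow{\sim} \Ext{1}{\lmod{\Lambda}}(U,S)$, and the image of $\Hom{\lmod{\Lambda}}(P,S) \to \Hom{\lmod{\Lambda}}(K,S)$ is zero because $K \subseteq \jacrad P$ (the kernel of a projective cover is superfluous, hence contained in the radical) and any map $P \to S$ kills $\jacrad P$. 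Thus $\Hom{\lmod{\Lambda}}(K,S) \cong \Ext{1}{\lmod{\Lambda}}(U,S)$. Splicing $0 \to K \to P \to U \to 0$ with $0 \to U \to \Lambda f \to T \to 0$ (or, equivalently, using dimension shift along the projective resolution $\cdots \to P \to \Lambda f \to T \to 0$) gives $\Ext{1}{\lmod{\Lambda}}(U,S) \cong \Ext{2}{\lmod{\Lambda}}(T,S)$, completing the chain.

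Finally I would check that every isomorphism produced is a map of left $\End{\lmod{\Lambda}}(S)$-modules. Each $\Hom$ and $\Ext$ group carries its natural left $\End{\lmod{\Lambda}}(S)$-action by post-composition in the second variable, and every map used above — the factorisation isomorphisms, the connecting homomorphisms, and the maps in the long exact sequences — is natural in $S$, hence commutes with post-composition by endomorphisms of $S$. So the $\End{\lmod{\Lambda}}(S)$-linearity is automatic and needs only a sentence.

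The routine bookkeeping — writing out the two long exact sequences and tracking which terms vanish — is mechanical. The one point that deserves care, and the place I expect the only real subtlety, is justifying that the relevant $\Hom$-into-$S$ restriction maps are zero: this rests on the facts that $U = \jacrad f$ and that the kernel $K$ of a projective cover lies inside $\jacrad P$, together with the observation that since $\Lambda$ is semiperfect (hence semilocal) $\jacrad$ really is the radical of every module in sight, so $\Hom{\lmod{\Lambda}}(P,S)$ and $\Hom{\lmod{\Lambda}}(\Lambda f,S)$ see only the top $P/\jacrad P$ and $\Lambda f/\jacrad f$. Once that is nailed down the rest falls out of the long exact sequences.
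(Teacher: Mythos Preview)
Your proposal is correct and follows essentially the same approach as the paper: both arguments apply $\Hom{\lmod{\Lambda}}(-,S)$ to the two short exact sequences $0\to K\to P\to U\to 0$ and $0\to U\to \Lambda f\to T\to 0$, use projectivity of the middle term to kill the relevant $\Ext$ terms, and use the superfluousness of the kernel of a projective cover to see that the restriction maps $\Hom{}(P,S)\to\Hom{}(K,S)$ and $\Hom{}(\Lambda f,S)\to\Hom{}(U,S)$ vanish. The only cosmetic difference is that the paper packages the $\End{\lMod{\Lambda}}(S)$-linearity up front by regarding $\Hom{\lmod{\Lambda}}(-,S)$ as a functor into $\lMod{\End{\lMod{\Lambda}}(S)}$, whereas you verify it at the end via naturality; both are fine.
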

\begin{proof}
Let $\mathcal{A}=\lmod{\Lambda}$, the category  of finitely generated left $\Lambda$-modules.  
Let $\mathcal{B}=\lMod{\End{\mathcal{A}}(S)}$, the category  of  left $\End{\mathcal{A}}(S)$-modules and consider\footnote{Any functor in this paper is assumed to be covariant. 
To consider contravariant functors we appropriately take opposite categories. } the hom-functor\footnote{Here we using that for any preadditive categories $\mathcal{C},\mathcal{D}$ and any biadditive functor $\mathscr{E}$ from $\mathcal{C}^{\mathrm{op}}\times \mathcal{D}$ to the category of abelian groups, each object $X$ in $\mathcal{D}$ defines an additive functor $\mathscr{E}(-,X)$ from $ \mathcal{C}^{\mathrm{op}}$ to the category of left $\End{\mathcal{D}}(X)$-modules. }    $\Hom{\mathcal{A}}(-,S)\colon \mathcal{A}^{\mathrm{op}}\to \mathcal{B}$. 
%

Since $\Lambda$ is noetherian $\mathcal{A}$ is abelian.  
Since $\Lambda$ is semiperfect $\mathcal{A}$ has enough projectives (objects have projective covers). 
Consider the functors $\Ext{d}{\mathcal{A}}(-,S)\colon \mathcal{A}^{\mathrm{op}}\to \mathcal{B}$ ($d\geq0$); see for example \cite[Theorem 2.4.5]{Wie1986}. 

Let $\mu\colon 0\to L\to M\to N\to 0$ be a conflation in $\mathcal{A}$. 
The long exact sequence of $\Hom{\mathcal{A}}(-,S)$ for $\mu$ is 
\[
\begin{tikzpicture}[descr/.style={fill=white,inner sep=2pt}]
        \matrix (m) [
            matrix of math nodes,
            row sep=1.5em,
            column sep=3.5em,
            text height=1.5ex, text depth=0.25ex
        ]
        { 0 & \Hom{\mathcal{A}}(N,S) & \Hom{\mathcal{A}}(M,S) & \Hom{\mathcal{A}}(L,S) &\\
            &  \Ext{1}{\mathcal{A}}(N,S) &  \Ext{1}{\mathcal{A}}(M,S) &  \Ext{1}{\mathcal{A}}(L,S) & \\
            & \Ext{2}{\mathcal{A}}(N,S) & \Ext{2}{\mathcal{A}}(M,S) & \Ext{2}{\mathcal{A}}(L,S) & \cdots \\
        };

        \path[overlay,->, font=\scriptsize,>=latex]
        (m-1-1) edge (m-1-2)
        (m-1-2) edge["$\alpha_{0}^{\mu}$"]  (m-1-3)
        (m-1-3) edge["$\beta_{0}^{\mu}$"] (m-1-4)
        (m-1-4) edge[out=195,in=15] node[descr,yshift=0.3ex] {$\gamma_{0}^{\mu}$} (m-2-2)
        (m-2-2) edge["$\alpha_{1}^{\mu}$"] (m-2-3)
        (m-2-3) edge["$\beta_{1}^{\mu}$"] (m-2-4)
        (m-2-4) edge[out=195,in=15] node[descr,yshift=0.3ex] {$\gamma_{1}^{\mu}$} (m-3-2)
        (m-3-2) edge["$\alpha_{2}^{\mu}$"] (m-3-3)
        (m-3-3) edge["$\beta_{2}^{\mu}$"] (m-3-4)
        (m-3-4) edge (m-3-5);
\end{tikzpicture}
\]
Now assume $M\to N$ is a projective cover. 
Since $M$ is projective  $\gamma_{0}^{\mu}$ is surjective and  $\gamma_{d}^{\mu}$ is an isomorphism for all $d>0$. 
The discussion at the start of \S\ref{sec-covers} gives   $\beta_{0}^{\mu}=0$ and so $\alpha_{0}^{\mu}$ and $\gamma_{0}^{\mu}$ are isomorphisms. 
Taking $\mu\colon 0\to K\to P\to U\to 0$ gives  $\Hom{\mathcal{A}}(P,S)\cong  \Hom{\mathcal{A}}(U,S)$ and $\Hom{\mathcal{A}}(K,S)\cong  \Ext{1}{\mathcal{A}}(U,S)$. 
Taking  $\mu\colon 0\to U\to \Lambda f\to T\to 0$ gives $\Hom{\mathcal{A}}(U,S) \cong \Ext{1}{\mathcal{A}}(T,S)$ and $\Ext{1}{\mathcal{A}}(U,S) \cong \Ext{2}{\mathcal{A}}(T,S)$. 
\end{proof}


\subsection{Noetherian semiperfect basic rings}
\label{subsection-noetherian-semiperfect}

We begin to restrict \Cref{ring-setup}. 

\begin{setup}
\label{setup-noetherian-semiperfect} 
In \S\ref{subsection-noetherian-semiperfect} assume $\Lambda$ is a noetherian semiperfect basic ring with a complete and orthogonal set $\boldsymbol{e}=\{e_{1},\dots,e_{n}\}$ of pair-wise non-isomorphic local idempotents. 
Recall $\jacrad =\rad(\Lambda)$. 
\end{setup}

Since the ring $\Gamma=\Lambda/\jacrad^{2}$ satisfies $\rad(\Gamma)=\jacrad /\jacrad^{2}$ and hence $\rad(\Gamma)^{2}=0$, it follows that $\Gamma$ is \emph{semiprimary}. 
Given that $\Lambda$ is noetherian $\Gamma$ is also noetherian. 
Hence $\Gamma$ is artinian by the Hopkins-Levitzki theorem. 

\subsubsection{Quivers of noetherian semiperfect basic rings}
\label{quiver-noetherian-semiperfect}

We saw above that $\Lambda/\jacrad^{2}$ is artinian since $\Lambda$ is noetherian, and hence we can and do follow work of Kiri\v{c}enko \cite{kirichenko-gen-uniserial} and recall the \emph{left quiver} $Q_{l}(\Lambda)$ and the \emph{right  quiver} $Q_{r}(\Lambda)$; see also \cite{Haze2004,MR957886}. 
In this article a quiver $Q=(Q_{0},Q_{1},h,t)$ consists of vertices $Q_{0}$, arrows $Q_{1}$ and head and tail functions $Q_{1}\to Q_{0}$ denoted $h$ and $t$ respectively.

Since $\Lambda$ is noetherian, for each $i$ the module $\rad(\Lambda e_{i})=\jacrad e_{i}$ is finitely generated, and 
we can take a projective cover $P(\jacrad e_{i})\to \jacrad  e_{i}$ by \cite[Theorem 2.1]{Bas1960}. 
By the discussion at the end of \S\ref{sec-projective-modules-over-semiperfect} we have 
\[
P(\jacrad e_{i})\cong \Lambda(a_{l}(i,1),\dots,a_{l}(i,n))= (\Lambda e_{1})^{(a_{l}(i,1))} \oplus \dots \oplus   (\Lambda e_{n})^{(a_{l}(i,n))}
\]
for integers $a_{l}(i,j)\geq 0$. 
Since we are assuming $\Lambda$ is basic, by  the discussion at the end of \S\ref{basic rings} the integers $a_{l}(i,j)$ are uniquely determined. 
The left quiver $Q_{l}(\Lambda)$ is defined by taking $\{v_{1},\dots,v_{n}\}$ as the set of vertices, and declaring that there are $a_{l}(i,j)$ distinct arrows with tail $v_{i}$ and head $v_{j}$. 
Said briefly, $a_{l}(i,j)$ is the unique multiplicity of $\Lambda e_{j}$ as a summand of the cover of $\jacrad e_{i}$. 

For the right quiver $Q_{r}(\Lambda)$ one takes vertices $\{u_{1},\dots,u_{n}\}$ and declares that there are $a_{r}(i,j)$ distinct arrows with tail $u_{i}$ and head $u_{j}$ provided  $e_{j}\Lambda$ occurs with multiplicity $a_{r}(i,j)$ in the projective cover of $e_{i}\jacrad$. 
We are interested in when the left and right quivers are opposites. 
Later, in \S\ref{section-definition}, we consider path algebras. 

\begin{rem}
\label{arrow-direction-convention-remark}
    Fix arrows $x$ and $y$ in a quiver $Q$ where the head of $y$ coincides with the tail of $x$. 
We write  $xy$ for the composition of $x$ and $y$. 
This means representations\footnote{Representations of $Q$ over a commutative ring $R$ are given by diagrams of shape $Q$ in $\lMod{R}$. } of $Q$ over a ring $R$ correspond to left modules over the path algebra $RQ$. 
If $R$ is a field and $Q$ is finite and acyclic then $Q$ is the left quiver of $RQ$. 
\end{rem}

\begin{lem}
    \label{lem-counting-arrows-in-quiver}
    \emph{(c.f \cite[\S III.1, Proposition 1.15]{AusReiSma1995}). }
The following statements hold for any $i,j=1,\dots,n$. 
\begin{enumerate}
    \item $a_{l}(i,j)$ is equal to the rank of $\Ext{1}{\lMod{\Lambda}}(\Lambda e_{i}/\jacrad e_{i},\Lambda e_{j}/\jacrad e_{j})$ as a left $\End{\lMod{\Lambda}}(\Lambda e_{j}/\jacrad e_{j})$-module. 
    \item $a_{r}(i,j)$ is equal to the rank of $\Ext{1}{\rMod{\Lambda}}(e_{i}\Lambda /e_{i}\jacrad ,e_{j}\Lambda /e_{j}\jacrad )$ as a left $\End{\rMod{\Lambda}}(e_{j}\Lambda /e_{j}\jacrad )$-module. 
\end{enumerate}
\end{lem}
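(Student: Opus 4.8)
The plan is to deduce Lemma~\ref{lem-counting-arrows-in-quiver} from the homological identification already established in Lemma~\ref{lem-proj-covers-ext-between-simples}, applied separately to each indecomposable projective $\Lambda e_j$. The two parts are dual to one another (left versus right modules), so I would prove (1) in detail and remark that (2) follows by passing to $\Lambda^{\mathrm{op}}$, using that the right quiver of $\Lambda$ is the left quiver of $\Lambda^{\mathrm{op}}$ and that all the hypotheses of Setup~\ref{setup-noetherian-semiperfect} are left-right symmetric.

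\emph{First} I would fix $i,j$ and set $S = \Lambda e_i/\jacrad e_i$, $T = \Lambda e_j / \jacrad e_j$, and recall from \S\ref{quiver-noetherian-semiperfect} that $a_l(j,i)$ — careful with the index order: $a_l(i,j)$ counts copies of $\Lambda e_j$ in the cover of $\jacrad e_i$ — is the multiplicity of $\Lambda e_j$ as a summand of a projective cover $P(\jacrad e_i) \to \jacrad e_i$. Since $\Lambda$ is noetherian, $U := \jacrad e_i$ is finitely generated, so this cover exists by \cite[Theorem 2.1]{Bas1960}, and by the Krull--Remak--Schmidt--Azumaya argument recalled at the end of \S\ref{basic rings} the multiplicities are well-defined. \emph{Then} I would observe that for a projective module $P = \bigoplus_k (\Lambda e_k)^{(a_l(i,k))}$, the space $\Hom{\lMod{\Lambda}}(P, T)$ is $\bigoplus_k \Hom{\lMod{\Lambda}}(\Lambda e_k, T)^{(a_l(i,k))}$, and $\Hom{\lMod{\Lambda}}(\Lambda e_k, T) \cong e_k T$, which is $0$ unless $k = j$ (as $T$ is the simple top of $\Lambda e_j$ and the $e_k$ are pairwise non-isomorphic local idempotents) and is $\cong \End{\lMod{\Lambda}}(T)$ as a left $\End{\lMod{\Lambda}}(T)$-module when $k = j$. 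Hence $\Hom{\lMod{\Lambda}}(P, T)$ is free of rank $a_l(i,j)$ over $D := \End{\lMod{\Lambda}}(T)$, which is a division ring since $T$ is simple.

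\emph{Next}, Lemma~\ref{lem-proj-covers-ext-between-simples} — applied with the roles there being its $S$ equal to our $T$, its $f$ equal to $e_i$, so that its $U = \jacrad e_i$ and its $P = P(\jacrad e_i)$ — gives a left $D$-module isomorphism $\Hom{\lMod{\Lambda}}(P, T) \cong \Ext{1}{\lMod{\Lambda}}(\Lambda e_i/\jacrad e_i, T)$; note the middle term $\Hom{\lMod{\Lambda}}(U,T)$ of that chain is what bridges them. Combining the two displays yields $\Ext{1}{\lMod{\Lambda}}(S, T)$ free of rank $a_l(i,j)$ over $D$, which is exactly statement (1) once one matches $S = \Lambda e_i/\jacrad e_i$ and $T = \Lambda e_j/\jacrad e_j$. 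For (2) I would run the identical argument in $\rMod{\Lambda}$, using the right-module half of Lemma~\ref{lem-proj-covers-ext-between-simples} (which is asserted there to hold by duality) and the right-quiver definition from \S\ref{quiver-noetherian-semiperfect}.

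\emph{The main obstacle} I anticipate is purely bookkeeping rather than conceptual: keeping the index conventions straight between $a_l(i,j)$ (summands of the cover of $\jacrad e_i$) and the direction of the arrows $v_i \to v_j$, and matching the letters $S,T,U,f,P$ as they are named inside Lemma~\ref{lem-proj-covers-ext-between-simples} against the letters used in the statement of Lemma~\ref{lem-counting-arrows-in-quiver}. I would also want to state explicitly why $\Hom{\lMod{\Lambda}}(\Lambda e_j, T) \cong e_j T \cong D$ as \emph{left} $D$-modules — the left $D$-action on $\Hom{\lMod{\Lambda}}(\Lambda e_j,T)$ is post-composition, and under the Yoneda isomorphism $\Hom{\lMod{\Lambda}}(\Lambda e_j, T) \cong e_j T$ this corresponds to the $D$-action on $T$ restricted to $e_j T = T$, giving a rank-one free module — so that the "rank over $\End{\lMod{\Lambda}}(T)$" in the statement is unambiguous. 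Everything else is a routine additivity-of-$\Hom$ computation.
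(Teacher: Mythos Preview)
Your proposal is correct and follows essentially the same route as the paper: apply Lemma~\ref{lem-proj-covers-ext-between-simples} with $f=e_i$ and $S=\Lambda e_j/\jacrad e_j$ to identify $\Ext{1}{\lMod{\Lambda}}(S_i,S_j)$ with $\Hom{\lMod{\Lambda}}(P(\jacrad e_i),S_j)$, decompose the latter via $P(\jacrad e_i)\cong\bigoplus_h(\Lambda e_h)^{(a_l(i,h))}$, and use that $\Hom{\lMod{\Lambda}}(\Lambda e_h,S_j)$ vanishes for $h\neq j$ and is a rank-one $\End{\lMod{\Lambda}}(S_j)$-module for $h=j$; the paper likewise dispatches (2) by duality. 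Your extra care with the $D$-module structure on $\Hom{\lMod{\Lambda}}(\Lambda e_j,T)$ is a welcome clarification that the paper leaves implicit.
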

\begin{proof}
Consider that $\Lambda$ is noetherian and basic. 
Let $S_{h}=\Lambda e_{h}/\jacrad e_{h}$ for all $h=1,\dots,n$. 
Hence each $\jacrad e_{i}$ is finitely generated. 
By \Cref{lem-proj-covers-ext-between-simples} we have $\Hom{\lMod{\Lambda}}(S_{h},S_{j})\cong \Hom{\lMod{\Lambda}}(\Lambda e_{h},S_{j})$ for any $h$. 

By Schur's lemma $\Hom{\lMod{\Lambda}}(S_{h},S_{j})$ is non-trivial if and only if $h=i$. 
By \Cref{lem-proj-covers-ext-between-simples} we have 
\[
\begin{array}{c}
\Ext{1}{\lMod{\Lambda}}(S_{i},S_{j})\cong \Hom{\lMod{\Lambda}}(P(\jacrad e_{i}),S_{j})\cong \bigoplus_{h=1}^{n}\left(\Hom{\lMod{\Lambda}}(\Lambda e_{h},S_{j})\right)^{a_{l}(i,h)}\cong \left(\End{\lMod{\Lambda}}(S_{j})\right)^{a_{l}(i,j)}
\end{array}
\]
as $\End{\lMod{\Lambda}}(S_{j})$-modules. 
Hence (1) holds, and (2) is dual. 
\end{proof}



In \S\ref{sec-quiv-complete-case} we see that the left and right quivers of basic module-finite algebras over complete local noetherian rings are opposites. 
Instead of a quiver one may alternatively define a \emph{weighted graph} using the values $a_{l}(i,j)$ and $a_{r}(i,j)$. 
This construction appeared in work on artin algebras going back to Auslander, Reiten and Smal{\o} \cite{AusReiSma1995}. More recently, it appeard in work of G\'{e}linas \cite[\S 3]{Gelinas-fintistic-dimension}.



\subsubsection{Biserial noetherian semiperfect rings}
\label{subsec-biserial}

Fuller \cite{Ful1978} introduced the notion of an artinian biserial ring. 
We consider the adaptation of this notion according to Kiri\v{c}enko and Kostyukevich \cite{kirichenko-kostyukevich}. 

Recall that a $\Lambda$-module $M$ is said to be \emph{uniserial} provided $L\subseteq N$ or $N\subseteq L$ for any pair $L,N$ of submodules of $M$. 
Hence $0$ is uniserial, as well as any simple module. 
More generally $M$ is said to be \emph{biserial} provided it contains uniserial modules $L,N$ such that the following two conditions hold. 
\begin{itemize}
    \item Either $L+N=M$, or $L+N=\rad(M)$ is the unique maximal submodule of $M$. 
    \item Either $L\cap N=0$, or $L\cap N=\soc(M)$ is the unique simple submodule of $M$. 
\end{itemize}
We say that $\Lambda$ is \emph{biserial} provided all of its principal local modules are biserial. 
In \Cref{thm-main-string-biserial} we prove that if $\Lambda$ is a string algebra over a local ring then $\Lambda$ is biserial. 
In particular, we prove that any principal local module $P$ contains uniserial modules $L,N$ such that $L\oplus N=\rad(P)$.


\begin{prop}
\label{noetherian-semiperfect-lemma}
If $0\neq L$ is a uniserial left submodule of $\Lambda e_{i} $ for some $i$, then there exists some $j$ and some $a\in e_{j}\Lambda e_{i}$ such that $L=\Lambda a$. 
In this case  the map $\Lambda e_{j} \to L$ given by $\lambda \mapsto \lambda a$ defines a projective cover. 
\end{prop}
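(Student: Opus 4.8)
The plan is to analyze the uniserial submodule $0 \neq L \subseteq \Lambda e_i$ by passing to its image in $\Lambda e_i / \jacrad e_i$ and finding a single element that generates it. Since $\Lambda$ is semiperfect and $\Lambda e_i$ is a principal local module, its radical is $\jacrad e_i$ and $\Lambda e_i / \jacrad e_i$ is simple. First I would observe that, $L$ being a nonzero submodule, either $L \subseteq \jacrad e_i$ or $L = \Lambda e_i$. In the latter case $L = \Lambda e_i = \Lambda e_i e_i$, so we may take $j$ with $e_j = e_i$ and $a = e_i \in e_i \Lambda e_i$; so assume $L \subseteq \jacrad e_i$. Because $L \neq 0$ and $L$ is uniserial, $L$ has a unique maximal submodule $\rad(L) = \jacrad L$ (using that $\Lambda$ is semilocal, so the radical of any module is $\jacrad$ times that module), and $L / \jacrad L$ is simple. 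Pick any $a \in L$ with $a \notin \jacrad L$; then $\Lambda a + \jacrad L = L$, and by Nakayama (valid since $L$ is noetherian, hence finitely generated) $\Lambda a = L$.

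Next I would arrange the idempotent decomposition. Since $\{e_1,\dots,e_n\}$ is complete and orthogonal, $a = \sum_h e_h a$, and as $a \in \Lambda e_i$ we have $a = a e_i$, so $a = \sum_h e_h a e_i$. Each summand $e_h a e_i$ lies in $\Lambda a = L$ (it equals $e_h a$, acting by $e_h \in \Lambda$), so $\Lambda e_h a e_i \subseteq L$ for every $h$. Because $L$ is uniserial, the submodules $\{\Lambda e_h a e_i\}_h$ are totally ordered, so one of them, say for $h = j$, contains all the others; then $a = \sum_h e_h a e_i \in \Lambda e_j a e_i$, whence $L = \Lambda a \subseteq \Lambda e_j a e_i \subseteq L$, giving $L = \Lambda e_j a e_i$ with $e_j a e_i \in e_j \Lambda e_i$. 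Renaming this element $a$, we have produced $j$ and $a \in e_j \Lambda e_i$ with $L = \Lambda a$.

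For the second assertion, consider the surjective $\Lambda$-module map $\mathsf{p}\colon \Lambda e_j \to L$, $\lambda \mapsto \lambda a$ (well-defined and $\Lambda$-linear since $a = e_j a$). To see it is a projective cover it suffices, by the standard characterization over semiperfect rings, to check that $\ker \mathsf{p}$ is superfluous, i.e. contained in $\rad(\Lambda e_j) = \jacrad e_j$. Equivalently, I would show $\mathsf{p}$ induces an isomorphism $\Lambda e_j / \jacrad e_j \xrightarrow{\sim} L / \jacrad L$ of simple modules: the composite $\Lambda e_j \to L \to L/\jacrad L$ is surjective onto a simple module, hence factors through $\Lambda e_j / \jacrad e_j$, and a surjection of a simple module onto a simple module is an isomorphism; then $\jacrad e_j = \ker(\Lambda e_j \to L/\jacrad L) = \mathsf{p}^{-1}(\jacrad L) \supseteq \ker \mathsf{p}$, as required. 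Applying \cite[Theorem 2.1]{Bas1960} (or the projective cover theory already invoked in \S\ref{sec-projective-modules-over-semiperfect}) finishes it.

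The main obstacle is the step where uniseriality is used to collapse $a = \sum_h e_h a e_i$ into a single Peirce component: one must be careful that "totally ordered family of submodules" genuinely forces the sum to equal one of its members, which is immediate once the family is finite but deserves an explicit word. Everything else is a routine application of Nakayama's lemma and the semiperfect/semilocal machinery (radicals are $\jacrad$ times the module, simple tops, superfluous kernels) already set up in \S\ref{subsection-semiperfect-background}.
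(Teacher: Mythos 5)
Your proof is correct, and its overall strategy coincides with the paper's: produce a generator of $L$ lying in a single Peirce component $e_{j}\Lambda e_{i}$, then show that right multiplication $\Lambda e_{j}\to L$ is a projective cover. The one genuine difference is the middle step. The paper gets $a=e_{j}a$ by citing the dual of \cite[Lemma 1.5]{Mul1991}, which only needs that the nonzero finitely generated uniserial module $L$ is \emph{local} over a semiperfect ring; you instead argue by hand, decomposing $a=\sum_{h}e_{h}a$ and using uniseriality to totally order the finitely many submodules $\Lambda e_{h}a$, so that a largest one absorbs the sum. Your version is self-contained and elementary but uses uniseriality more strongly than necessary, while the cited lemma would apply verbatim to any local submodule. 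For the cover property you verify superfluity of the kernel via the induced isomorphism of simple tops; the paper disposes of this more briefly by noting that any surjection from the principal local module $\Lambda e_{j}$ onto a nonzero module is automatically a cover, since its kernel is a proper submodule and hence lies in the unique maximal, superfluous, submodule $\jacrad e_{j}$. The small point you flag (a finite nonempty totally ordered family of submodules has a largest member) is indeed all that is needed there, so there is no gap.
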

\begin{proof}
Since $L$ is a submodule of a finitely generated $\Lambda$-module, $L$ is finitely generated (since $\Lambda$ is noetherian). 
Since finitely generated uniserial modules are always cyclic, we have $L=\Lambda a$ for some $a\in L\subseteq   \Lambda e_{i}$.  
Since $\Lambda$ is semiperfect, and since non-zero uniserial modules are local, by the dual of \cite[Lemma 1.5]{Mul1991} we have $a=e_{j}a$ for some $j$. 
Hence right multiplication defines a surjective map $\Lambda e_{j}\to L$, necessarily a projective cover  since $\Lambda e_{j}$ is a projective indecomposable module. 
\end{proof}

\begin{lem}
    \label{quivers-of-biserials}\emph{(c.f \cite[Theorem 1]{MR957886})}
    Let $Q$ be the left (respectively, right) quiver for $\Lambda$. 
    If $\Lambda$ is biserial then any vertex in $Q$ is the tail  of at most $2$ arrows. 
\end{lem}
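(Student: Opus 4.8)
The plan is to unwind the definition of the left quiver $Q_l(\Lambda)$ and show that if some vertex $v_i$ were the tail of $3$ or more arrows, then $\Lambda e_i$ could not be biserial, contradicting the hypothesis. Recall that the number of arrows with tail $v_i$ equals $\sum_{j} a_l(i,j)$, which by definition is the total multiplicity of principal local modules in a projective cover $P(\jacrad e_i) \to \jacrad e_i$. So it suffices to prove: if $\Lambda e_i$ is biserial then the projective cover of $\rad(\Lambda e_i) = \jacrad e_i$ has at most $2$ indecomposable summands.

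First I would record the structure coming from biseriality: since $\Lambda e_i$ is biserial there are uniserial submodules $L, N \subseteq \Lambda e_i$ with $L + N$ equal to $\Lambda e_i$ or to $\rad(\Lambda e_i) = \jacrad e_i$. Because $\Lambda e_i$ is local with maximal submodule $\jacrad e_i$, and $L, N$ are submodules, we cannot have $L = \Lambda e_i$ unless $L = \Lambda e_i$ itself — but then $\Lambda e_i$ would be uniserial and $\rad(\Lambda e_i)$ is uniserial too, hence has a cover with a single summand by \Cref{noetherian-semiperfect-lemma}, and we are done. So assume $L, N \subseteq \jacrad e_i$ properly, whence $L + N = \jacrad e_i$ (the case $L+N = \Lambda e_i$ is impossible since $L + N \subseteq \jacrad e_i$). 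Thus $\jacrad e_i = L + N$ is a sum of two uniserial modules.

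Next I would produce the cover. Each of $L$ and $N$, being a nonzero uniserial (hence local) finitely generated module — finitely generated because $\Lambda$ is noetherian and they sit inside $\Lambda e_i$ — has a projective cover that is a single principal local module: apply \Cref{noetherian-semiperfect-lemma}, or more directly, a nonzero uniserial module is local so its projective cover is indecomposable. Then $P(L) \oplus P(N) \to L \oplus N \twoheadrightarrow L + N = \jacrad e_i$ is a projective cover of $\jacrad e_i$ by at most $2$ principal local modules (it is a cover because the composite is superfluous: the kernel of a sum of covers maps onto a superfluous submodule, and surjecting onto $L+N$ from $L\oplus N$ is a cover since $\jacrad(L\oplus N) = \jacrad L \oplus \jacrad N$ maps into $\jacrad(L+N)$). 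By the uniqueness of the multiplicities $a_l(i,j)$ from the discussion at the end of \S\ref{basic rings} (using that $\Lambda$ is basic), we conclude $\sum_j a_l(i,j) \le 2$, i.e.\ $v_i$ is the tail of at most $2$ arrows. The right-quiver statement is dual, replacing left modules by right modules and $\Lambda e_i$ by $e_i \Lambda$.

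The main obstacle, and the point needing the most care, is the claim that $P(L) \oplus P(N) \to L + N$ is genuinely a \emph{projective cover} and not merely a projective surjection — one must check the kernel is superfluous in $P(L)\oplus P(N)$, or equivalently that the induced map $P(L)\oplus P(N) \to (L+N)/\jacrad(L+N)$ factors an isomorphism on the radical quotient. This follows because each $P(L), P(N)$ is local, so $\rad(P(L)\oplus P(N)) = \jacrad P(L) \oplus \jacrad P(N)$ is exactly the preimage of $\jacrad(L+N)$, but spelling this out cleanly (and handling the edge cases $L = 0$ or $N = 0$, and the uniserial case of $\Lambda e_i$ itself) is where the bookkeeping lives. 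Everything else is a direct translation through the definition of $Q_l(\Lambda)$ and \Cref{noetherian-semiperfect-lemma}.
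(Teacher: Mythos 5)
Your overall strategy is the paper's: reduce the number of arrows with tail $v_i$ to the number of indecomposable summands of a projective cover of $\jacrad e_{i}$, use biseriality to write $\jacrad e_{i}=L+N$ with $L,N$ uniserial (the case $L+N=\Lambda e_{i}$ correctly collapses to the uniserial case because $\Lambda e_{i}$ is local), and use \Cref{noetherian-semiperfect-lemma} to cover each nonzero uniserial by a single principal local module. The gap is in the one step you yourself flag as delicate: the claim that $P(L)\oplus P(N)\to L\oplus N\to L+N$ is a projective cover. Your stated justification --- ``$L\oplus N\to L+N$ is a cover since $\jacrad(L\oplus N)=\jacrad L\oplus\jacrad N$ maps into $\jacrad(L+N)$'' --- is not an argument: every module homomorphism maps the radical into the radical. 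What has to be checked is that the kernel, the antidiagonal copy of $L\cap N$, is superfluous, i.e.\ contained in $\rad L\oplus\rad N$. This is true when neither of $L,N$ contains the other (then $L\cap N$ is a proper submodule of each uniserial module, hence lies in each radical), but it genuinely fails when, say, $0\neq L\subseteq N$: there $L\cap N=L\not\subseteq\rad L$, the map $P(L)\oplus P(N)\to L+N=N$ is not a projective cover, and your auxiliary claim that $\rad(P(L)\oplus P(N))$ equals the preimage of $\jacrad(L+N)$ also fails. So as written, the appeal to uniqueness of the multiplicities $a_{l}(i,j)$ is not justified in that case, and it is not among the edge cases you list ($L=0$, $N=0$, $\Lambda e_{i}$ uniserial).

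The repair is easy, and there are two routes. One is to split cases: if one of $L,N$ contains the other then $\jacrad e_{i}$ is itself uniserial and its cover is indecomposable by \Cref{noetherian-semiperfect-lemma}; otherwise argue superfluity of the kernel as above and proceed as you do. The other is the paper's route, which avoids the cover claim entirely: $\Lambda e_{j}\oplus\Lambda e_{h}\to L\oplus N$ is a projective cover, its composite with the epimorphism $L\oplus N\to\jacrad e_{i}$ is merely a projective epimorphism, and since $P\to\jacrad e_{i}$ is a projective cover the comparison map $\Lambda e_{j}\oplus\Lambda e_{h}\to P$ is a split epimorphism; hence $P$ is a direct summand of $\Lambda e_{j}\oplus\Lambda e_{h}$ and, by Krull--Remak--Schmidt--Azumaya, has at most two indecomposable summands. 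That argument is uniform in all cases and is the cleaner way to finish your proof.
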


\begin{proof}
Suppose $Q$ is the left quiver of $\Lambda$. 
Recall $\boldsymbol{e}=\{e_{1},\dots,e_{n}\}$ is a complete and orthogonal set of pair-wise non-isomorphic local idempotents. 
Fix $i=1,\dots,n$. 
We consider the most complicated situation, where $\jacrad e=L+N$ for non-zero uniserial modules $L,N$. 
By \Cref{noetherian-semiperfect-lemma} one can choose $h,j=1,\dots,n$ such that there are projective covers of the form $\Lambda e_{j}\to L$ and $\Lambda e_{h}\to N$. 
Hence taking direct sums defines a projective cover $\Lambda e_{j}\oplus \Lambda e_{h}\to L\oplus N$.  
Considering the 
epimorphism $ L\oplus N\to \jacrad e$  and 
that $P\to \jacrad e$ is a projective cover there exists  a split epimorphism $\Lambda e_{j}\oplus \Lambda e_{h}\to P$. 
Hence if $P$ is non-trivial it is isomorphic to $\Lambda e_{j}$, $\Lambda e_{h}$ or their direct sum. 
Thus there cannot be $3$ distinct arrows in $Q$ with head $i$ since $\Lambda e_{j}$ and $\Lambda e_{h}$ are indecomposable; see \S\ref{sec-proj-mods}. 
The respective claims follow by symmetry. 
\end{proof}



\section{Module-finite algebras over noetherian local rings}
\label{sec-module-finite-algebras} 
In \S\ref{sec-module-finite-algebras}  we forget the assumptions from \S\ref{section-background}. 
We provide well-known remarks about module-finite algebras over local noetherian ground rings. 
In \S\ref{sec-mod-finite-over-complete} we note more that can be said when $R$ is complete in its $\maxideal$-adic topology. 
In the meantime, in \S\ref{subsec-krull-intersect} we recall what holds even without this completeness assumption.   

\begin{setup}
\label{setup-mod-finite-over-R}
In \S\ref{sec-module-finite-algebras} we let $(R,\maxideal,k)$ be a (commutative) noetherian local ring and we let $\Lambda$ be an $R$-algebra. 
Assume additionally that  $\Lambda$ is \emph{module-finite over} $R$, meaning that $\Lambda$ finitely generated as an $R$-module. 
\end{setup}

\subsection{Krull-intersection and Matlis-duality}
\label{subsec-krull-intersect} 
In \S\ref{subsec-krull-intersect} we note properties of module-finite $R$-algebras.

\subsubsection{Radical topologies and Krull-intersection}
\label{subsec-rad-topol-and-krull-intersect}

Since $R$ is a noetherian ring, the ring $\Lambda$ is left and right noetherian, and hence the centre of $\Lambda$   is a finite ring extension of $R$. 
Since $\Lambda$ is module-finite over $R$ and since $R$ is (semi)local the ring $\Lambda$ is semilocal. 
In particular, by \cite[Proposition 24.4]{Lam1991} we have $\rad(M)=\jacrad M$ 
for any left 
$\Lambda$-module $M$. 
Furthermore by \cite[Proposition 20.6]{Lam1991} we have
\[
\begin{array}{cc}
\Lambda\maxideal\subseteq \jacrad,
&
\jacrad^{n}\subseteq \Lambda\maxideal
\end{array}
\]
for some integer $n>0$, where $\Lambda\maxideal$ is the two-sided ideal in $\Lambda$ generated by the image of $\maxideal$ under the algebra map  $\mathsf{a}\colon R\to \Lambda$.  
Since $\maxideal$ is maximal this means we have  $\ker(\mathsf{b}\mathsf{a})=\maxideal$ where $\mathsf{b}\colon \Lambda\to \Lambda/\jacrad$ is the canonical projection. 
Hence there is a ring homomorphism $\mathsf{c}\colon k\to \Lambda/\jacrad $ that is injective since $k$ is a field, and so $\mathsf{c}$ equips  $\Lambda /\jacrad $ with a $k$-algebra structure. 
Moreover $\Lambda$ is module-finite over $R$ and so $\dim_{k}\left(\Lambda/\jacrad \right)<\infty$. 

Given that we are assuming $\Lambda$ is module-finite over $R$, the conclusion of \cite[Theorem 1]{Schelter-intersection-1976} holds\footnote{This can be seen by observing that $\Lambda$ is a \emph{polynomial identity ring} (see the first sentence in the introduction of  \cite{Schelter-intersection-1976}) since it must be module-finite as an algebra over its centre of $\Lambda$  given that it is module-finite as an $R$-algebra.},
and so we can recall a \emph{Krull intersection} property. 
Here we follow a general result by Schelter \cite{Schelter-intersection-1976}. 

Let $X$ be a (two-sided) ideal in $\Lambda$ such that for any maximal ideal $Y$ in $\Lambda$ we have that $X\cap \mathcal{Z}(\Lambda)\subseteq Y$ implies $X\subseteq Y$. 
By \cite[Theorem 2]{Schelter-intersection-1976} we have $XN=N$ for any finitely generated $\Lambda$-module $M$ where $N=\bigcap_{n>0}X^{n}M$. 
Now specify  $X\jacrad =\mathrm{rad}(\Lambda)$. 
In this case  $N=\bigcap_{n>0}\jacrad^{n}M$ and since $\Lambda$ is noetherian the submodule $N$ of $M$ is (also) finitely generated: and hence  $\jacrad N=N$ gives $N=0$ by Nakayamas lemma. 
In this article we refer to the equality $\bigcap_{n>0}\jacrad^{n}M=0$ as the \emph{Krull-intersection property} for $M$.

\subsubsection{Completeness and Matlis duality}  
\label{sec-completeness-closure}

We recall well-known work of Matlis \cite{Matlis-Modules-with-descending-chain-condition}. 

Since $R$ is noetherian an $R$-module $V$ is noetherian if and only if it there is an exact sequence of $R$-modules of the form $R^{n}\to R^{m}\to V\to 0$. 
Let $\env$ be the injective envelope of $k$ in $\lMod{R}$. 
By \cite[Theorem 1]{Matlis-Modules-with-descending-chain-condition}  $V$ is artinian if and only if there is an exact sequence $0\to V\to \env^{n}\to \env^{m}$. 
This means $\env$ is artinian, equal to the union of its  $\maxideal^{n}$-torsion submodules, and so $\End{\lMod{R}}(\env)\cong R_{\maxideal}$,  the $\maxideal$-adic completion of $R$. 

Let $\homdual{R}{}$ be the $R$-linear  functor $\Hom{\lMod{R}}(-,\env)\colon \lMod{R}\to(\lMod{R})^{\mathrm{op}}$ that is exact since $\env$ is injective. 
Note that $\homdual{R}{}$ sends noetherian modules to artinian modules. 
For $V$  in $\lMod{R}$ consider the morphism $\Delta_{V}\colon V\to \homdual{R}{\mathrm{op}}(\homdual{R}{}(V))$ that maps $v\in V$ to  $\mathsf{f}\mapsto \mathsf{f}(v)$ for each $\mathsf{f}\in\homdual{R}{}(V)$. 
This gives a natural transformation $\Delta\colon \boldsymbol{1}_{\lMod{R}}\Rightarrow \homdual{R}{\mathrm{op}}\homdual{R}{}$  
where each of the maps $\Delta_{V}$ are injective.

For any $M$ in $\lMod{\Lambda}$ the image $\homdual{R}{}(M)=\Hom{\lMod{R}}(M,\env)$ of the underlying $R$-module has the inherited structure of a right $\Lambda$-module, and hence we have a functor $\homdual{\Lambda}{}\colon \lMod{\Lambda}\to (\rMod{\Lambda})^{\mathrm{op}}$ given by composing  $\homdual{R}{}$ with the  appropriate forgetful functors.  
Thus $\Delta$ defines a natural transformation $\boldsymbol{1}_{\lMod{\Lambda}}\Rightarrow \homdual{\Lambda}{\mathrm{op}}\homdual{\Lambda}{}$ of functors  $\lMod{\Lambda}\to \lMod{\Lambda}$ given by injective maps $M\to \homdual{\Lambda}{\mathrm{op}}(\homdual{\Lambda}{}(M))$. 

Now assume $R$ is complete in the $\maxideal$-adic topology. 
Hence $\End{\lMod{R}}(\env)\cong R$ from the discussion above, and so $\homdual{R}{}(k)\cong k$ by applying $\homdual{R}{}$ to the injective envelope $k\to \env$. 
This means that $\homdual{R}{}(V)$ is noetherian for any artinian $R$-module $V$, and if $V$ has finite length then $\homdual{R}{}(V)$ has the same length. 

Let $\lfl{\Lambda}{R}$  and $\rfl{R}{\Lambda}$ be the full subcategories of $\lMod{\Lambda}$ and $\rMod{\Lambda}$ consisting of finite-length $\Lambda$-modules, or equivalently\footnote{Since $\Lambda$ is module-finite over $R$ we have $\Lambda \maxideal\subseteq \jacrad$ and so $\maxideal$ annihilates simple $\Lambda$-modules, and so such simples are finite-dimensional vector spaces over $k$.  
It follows that finite-length $\Lambda$-modules are finite-length over $R$. 
The converse is trivial. }, $\Lambda$-modules that have finite length over $R$.  
Hence as $R$ is complete the functor $\homdual{\Lambda}{}=\Hom{\lMod{R}}(-,\env)$ restricts to an equivalence  $\matdual{\Lambda}{}\colon \lfl{\Lambda}{R}\to (\hspace{-0.5mm}\rfl{R}{\Lambda})^{\mathrm{op}}$ called the \emph{Matlis dual}. 
Namely, from the discussion so far we have $\matdual{\Lambda}{\mathrm{op}}\matdual{\Lambda}{}\cong \boldsymbol{1}_{\lfl{\Lambda}{R}}$.  

\subsection{Module-finite algebras over complete local rings} 
\label{sec-mod-finite-over-complete}

Recall $\Lambda$ is a module-finite algebra over a noetherian local ring $(R,\maxideal,k)$. 
Hence $\Lambda$ is noetherian and semilocal. 
If we assume $R$ is $\maxideal$-adically complete one can show the ring $\Lambda$ is   also semiperfect; see for example \cite[Example 23.3]{Lam1991}.

\begin{setup}
\label{setup-completeness-of-R-assumtpion}
We restrict \Cref{setup-mod-finite-over-R}, where we assumed already that $\Lambda$ is a module-finite algebra over a local ring $(R,\maxideal,k)$. 
In \S\ref{sec-mod-finite-over-complete} we additionally assume firstly that $R$ is $\maxideal$-adically complete, and so $\Lambda$ is semiperfect with a complete and orthogonal set $\boldsymbol{e}=\{e_{1},\dots,e_{n}\}$ of local idempotents. 
\end{setup}

From the completeness of $R$ one yields useful information about $\Lambda$. 
We already saw that $\Lambda$ is semiperfect. 
Furthermore the centre of $\Lambda$  is  isomorphic to a product of complete local noetherian rings; see for example  \cite[Proposition 4.3.2]{HunSwa2006}. 
By \S\ref{sec-completeness-closure} we can also consider the Matlis dual $\matdual{\Lambda^{\text{op}}}{}\colon \lfl{\Lambda}{R}\to (\hspace{-0.5mm}\rfl{R}{\Lambda})^{\mathrm{op}}$, an equivalence with quasi-inverse $\matdual{\Lambda}{\mathrm{op}}$. 
In \S\ref{sec-mod-finite-over-complete}
 we use $\matdual{\Lambda}{}$ to recover additional properties of $\Lambda$.

\subsubsection{Extension symmetry between simples}
\label{sec-ext-sym-simples-matlis} 
We note how  $\matdual{\Lambda}{}$ interacts with short exact sequences in  $\lfl{\Lambda}{R}$. 

 Fix a short exact sequence $\mu\colon 0\to L\to M\to N\to 0$ in  $\lMod{\Lambda}$. 
Assuming $L,N$ lie in $ \lfl{\Lambda}{R}$ so does $M$,  and applying $\matdual{\Lambda}{}$ to $\mu$ defines a short exact sequence $\matdual{\Lambda}{}(\mu)\colon 0\to \matdual{\Lambda}{}(N)\to \matdual{\Lambda}{}(M)\to \matdual{\Lambda}{}(L)\to 0$ in $\rfl{R}{\Lambda}$ by the injectivity of $\env$; see for example \cite[Exercise 3.2.4]{Wie1986}. 
From here it is straightforward to see this gives an $R$-module isomorphism  $\Ext{1}{\lMod{\Lambda}}(L,N)\cong \Ext{1}{\rMod{\Lambda}}(\matdual{\Lambda}{}(N),\matdual{\Lambda}{}(L))$ as $R$-modules. 

   

  
    

In \Cref{lem-symmetry-between-simples-ext} we check that simple left and right $\Lambda$-modules are in bijection under the Matlis dual.  

\begin{lem}
    \label{lem-symmetry-between-simples-ext} 
    If $\Lambda$ is basic and $e\in \boldsymbol{e}$ then $\matdual{\Lambda}{}(\Lambda e/\jacrad e)\cong e\Lambda /e\jacrad$ and $\matdual{\Lambda^{\mathrm{op}}}{\mathrm{op}}(e\Lambda /e\jacrad)\cong \Lambda e /\jacrad e$ as $\Lambda$-modules. 
\end{lem}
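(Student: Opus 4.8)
The plan is to compute the Matlis dual of a principal local left module's simple top directly, using the idempotent $e$. First I would observe that $\Lambda e/\jacrad e$ is a finite-length module (it is simple, annihilated by $\jacrad$ and hence by $\maxideal$, so it is a finite-dimensional $k$-vector space), so it lies in $\lfl{\Lambda}{R}$ and the Matlis dual $\matdual{\Lambda}{}$ applies. The key point is that applying $\homdual{R}{}=\Hom{\lMod{R}}(-,\env)$ commutes with the idempotent decomposition: since $\Lambda = \bigoplus_{i} \Lambda e_i$ as a left module (equivalently $1 = \sum e_i$), and multiplication by $e$ on the left of a left module corresponds under $\homdual{\Lambda}{}$ to multiplication by $e$ on the right of the dual right module, one gets $\homdual{\Lambda}{}(\Lambda e) \cong e\homdual{\Lambda}{}(\Lambda)$ canonically, and more usefully $\matdual{\Lambda}{}(\Lambda e/\jacrad e)$ is a right $\Lambda$-module killed on the right by $\jacrad$ (because $\jacrad$ kills $\Lambda e/\jacrad e$ on the left), hence a right $\Lambda/\jacrad$-module.

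Next I would identify that right $\Lambda/\jacrad$-module. Since $\Lambda$ is basic, $\Lambda/\jacrad \cong \prod_i k_i$ where each $k_i = e_i\Lambda e_i/(e_i\jacrad e_i)$ is a copy of (a finite extension of) $k$; the simple left $\Lambda$-module $S = \Lambda e/\jacrad e$ corresponds to the factor indexed by $e$. Applying $\homdual{R}{}$ to $S$, which as an $R$-module is a finite-dimensional $k$-vector space, and using that $R$ is complete so that $\homdual{R}{}(k)\cong k$ and $\homdual{R}{}$ preserves length (from \S\ref{sec-completeness-closure}), gives that $\matdual{\Lambda}{}(S)$ is a simple right $\Lambda$-module. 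There is, up to isomorphism, exactly one simple right $\Lambda$-module corresponding to the idempotent $e$, namely $e\Lambda/e\jacrad$ (this is the right-module analogue of \S\ref{sec-proj-mods}, using that $\Lambda$ is basic so the simple right modules are indexed by $\boldsymbol{e}$ via $e\Lambda/e\jacrad$). To pin down that $\matdual{\Lambda}{}(S)$ is this particular one and not a different factor, I would check that $e$ acts as the identity on $\matdual{\Lambda}{}(S)$ on the right: an element $\mathsf f \in \homdual{R}{}(S)$ satisfies $\mathsf f e = \mathsf f$ iff $\mathsf f(es) = \mathsf f(s)$ for all $s$, which holds since $es = s$ for $s \in \Lambda e/\jacrad e$. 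Hence $\matdual{\Lambda}{}(S)\cong e\Lambda/e\jacrad$.

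The second isomorphism, $\matdual{\Lambda^{\mathrm{op}}}{\mathrm{op}}(e\Lambda/e\jacrad)\cong \Lambda e/\jacrad e$, then follows formally: applying the quasi-inverse $\matdual{\Lambda}{\mathrm{op}}$ (equivalently $\matdual{\Lambda^{\mathrm{op}}}{\mathrm{op}}$) to both sides of the first isomorphism and using $\matdual{\Lambda}{\mathrm{op}}\matdual{\Lambda}{}\cong \boldsymbol 1_{\lfl{\Lambda}{R}}$ from \S\ref{sec-completeness-closure} gives it at once. I expect the only real subtlety — the ``main obstacle'' — to be the bookkeeping that the Matlis dual genuinely carries the right $\Lambda$-module structure we claim (i.e.\ that left multiplication by $e$ dualizes to right multiplication by $e$, with no opposite-ring twist sneaking in), and that the resulting simple is indexed by the \emph{same} idempotent $e$; once the $e$-action computation $\mathsf f e = \mathsf f$ above is in hand, the rest is routine.
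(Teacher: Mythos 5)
Your proposal is correct and takes essentially the same route as the paper's proof: both observe that $\matdual{\Lambda}{}(\Lambda e/\jacrad e)$ is annihilated by $\jacrad$, hence a semisimple right $\Lambda/\jacrad$-module, use basicness (the Peirce fact $g\Lambda e\subseteq g\jacrad e$ for $g\ncong e$, which is exactly what underlies your computation that $e$ acts as the identity on the dual while killing the other simples) to see that only $e\Lambda/e\jacrad$ can occur as a summand, and finish with length preservation of the Matlis dual, obtaining the second isomorphism by the quasi-inverse duality. The only cosmetic difference is that you single out the correct simple via the right $e$-action whereas the paper phrases it as $\Hom{\rMod{\Lambda}}(g\Lambda,\matdual{\Lambda}{}(\Lambda e/\jacrad e))=0$ for $g\ncong e$; note also that simplicity of the dual is most cleanly justified by the fact that a duality reverses the submodule lattice (or by the multiplicity-times-length count), rather than by length preservation alone.
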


\begin{proof}
    Let $S_{g}=\Lambda g/\jacrad g$ and  $S_{g}'=g\Lambda /g\jacrad $ for any $g\in \boldsymbol{e}$. 
    We already saw $\dim_{k}(\Lambda/\jacrad)<\infty $ since $\Lambda$ is module-finite over $R$. 
    Hence $\dim_{k}(S_{e})<\infty$ and $\dim_{k}(S_{e}')<\infty $ for each $g$.  
    For any $\mathsf{f}\in \matdual{\Lambda}{}(S_{e})$ any $\lambda\in \Lambda$ we have 
    $\mathsf{f}\lambda=0$ when $\lambda\in \jacrad$ and so  $\matdual{\Lambda}{}(S_{e})$ is a right module over the semisimple ring  $\Lambda/\jacrad$. 
    Hence $\matdual{\Lambda}{}(S_{e})$ is a direct sum of modules of the form $S_{g}'$. 
    
    Now let $g\in \boldsymbol{e}$ and suppose $g\ncong e$. 
    Since $\Lambda$ is basic this means  $g\Lambda e\subseteq g\jacrad e$ by considering the Peirce decompositions of $\Lambda $ and $\jacrad$ from \S\ref{basic rings}. 
    Hence $\mathsf{f}\lambda=0$ for any $\lambda\in g\Lambda $  and $\mathsf{f}\in \matdual{\Lambda}{}(S_{e})$. 
    Altogether $\Hom{\rMod{\Lambda}}(g\Lambda,\matdual{\Lambda}{} (S_{e}))=0$, and so the direct sum $\matdual{\Lambda}{} (S_{e})$ only involves copies of $S_{e}'$. 
    As in \S\ref{sec-completeness-closure} the duality $\matdual{\Lambda}{}$ preserves length over $R$ of objects in $\lfl{\Lambda}{R}$. 
    Hence $\matdual{\Lambda}{} (S_{e})\cong S_{e}'$ and dually $\matdual{\Lambda^{\mathrm{op}}}{\mathrm{op}}(S_{e}')\cong S_{e}$.
\end{proof}

\begin{lem}
\label{lem-dimension-number-of-arrows}
\emph{(c.f \cite[\S III.1, Proposition 1.14]{AusReiSma1995}, \cite[p. 2618]{CarSal1987}).} 
    If $\Lambda$ is basic and if $e,f\in\boldsymbol{e}$ then 
    \[
    \dim_{k}\left(
    \Ext{1}{\lMod{\Lambda}}(\Lambda e/\jacrad e,\Lambda f/\jacrad f)
    \right)=
    \dim_{k}\left(f\jacrad e/f\jacrad^{2}e\right)=
    \dim_{k}\left(
    \Ext{1}{\rMod{\Lambda}}(f\Lambda /f\jacrad ,e\Lambda /e\jacrad )
    \right).
    \] 
\end{lem}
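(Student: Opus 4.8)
The plan is to use $\dim_{k}(f\jacrad e/f\jacrad^{2}e)$ as the pivot: I would prove it equals the first dimension in the statement, and deduce its equality with the third purely formally via the Matlis dual. For that reduction, \S\ref{sec-ext-sym-simples-matlis} supplies an $R$-module isomorphism $\Ext{1}{\lMod{\Lambda}}(\Lambda e/\jacrad e,\Lambda f/\jacrad f)\cong \Ext{1}{\rMod{\Lambda}}(\matdual{\Lambda}{}(\Lambda f/\jacrad f),\matdual{\Lambda}{}(\Lambda e/\jacrad e))$, and \Cref{lem-symmetry-between-simples-ext} rewrites the right-hand side as $\Ext{1}{\rMod{\Lambda}}(f\Lambda/f\jacrad,e\Lambda/e\jacrad)$. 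Both $\Ext^{1}$-groups are annihilated by $\maxideal$ (it acts centrally on $\Lambda$ and kills the simple modules), so they are $k$-vector spaces and the $R$-isomorphism is $k$-linear; hence the first and third dimensions agree, and it remains only to match one of them with the middle term.

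For the equality of the first and middle dimensions, I would take the projective cover $0\to \jacrad e\to \Lambda e\to \Lambda e/\jacrad e\to 0$ and apply \Cref{lem-proj-covers-ext-between-simples} (interchanging the roles of $e$ and $f$) to obtain $\Ext{1}{\lMod{\Lambda}}(\Lambda e/\jacrad e,\Lambda f/\jacrad f)\cong \Hom{\lMod{\Lambda}}(\jacrad e,\Lambda f/\jacrad f)$; since $\jacrad$ annihilates the target this equals $\Hom{\lMod{\Lambda}}(\jacrad e/\jacrad^{2}e,\Lambda f/\jacrad f)$, a hom-space of semisimple $\Lambda/\jacrad$-modules. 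Writing $\jacrad e/\jacrad^{2}e\cong\bigoplus_{g}(\Lambda g/\jacrad g)^{m_{g}}$ with $g$ running over $\boldsymbol{e}$, Schur's lemma gives its $k$-dimension as $m_{f}\dim_{k}\End{\lMod{\Lambda}}(\Lambda f/\jacrad f)$. Multiplying the same decomposition by $f$ and using that $\Lambda$ is basic — the off-diagonal Peirce blocks satisfy $f\Lambda g=f\jacrad g$ for $g\neq f$, as in the proof of \Cref{lem-symmetry-between-simples-ext} — yields $f(\jacrad e/\jacrad^{2}e)\cong(f\Lambda f/f\jacrad f)^{m_{f}}$; and a short Peirce computation (via $f\jacrad e\cap\jacrad^{2}e=f\jacrad^{2}e$) identifies $f(\jacrad e/\jacrad^{2}e)$ with $f\jacrad e/f\jacrad^{2}e$. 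Since $\End{\lMod{\Lambda}}(\Lambda f/\jacrad f)\cong(f\Lambda f/f\jacrad f)^{\mathrm{op}}$, the two $k$-dimensions coincide.

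The only subtle point is this bookkeeping: the factor $\dim_{k}\End{\lMod{\Lambda}}(\Lambda f/\jacrad f)$ — which need not be $1$, as the residue division rings of $\Lambda/\jacrad$ can be larger than $k$ — must appear on both sides, emerging from Schur's lemma on the $\Hom$ side and from $f(\Lambda f/\jacrad f)=f\Lambda f/f\jacrad f$ on the Peirce side. The remaining ingredients (vanishing of off-diagonal Peirce blocks in a basic ring, the identification $f(\jacrad e/\jacrad^{2}e)\cong f\jacrad e/f\jacrad^{2}e$) are routine. I would also note that, since $\Lambda$ is noetherian, $\Ext^{1}$ between the finite-length modules in question may be computed in $\lmod{\Lambda}$ or in $\lMod{\Lambda}$ interchangeably, so invoking \Cref{lem-proj-covers-ext-between-simples} (stated for $\lmod{\Lambda}$) is legitimate.
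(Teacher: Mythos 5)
Your argument is correct, and it reaches the two equalities by a route that differs from the paper's in both halves. For the equality of the first and middle terms, the paper also starts from \Cref{lem-proj-covers-ext-between-simples} and the reduction to $\Hom{\lMod{\Gamma}}(\jacrad e/\jacrad^{2}e,\Lambda f/\jacrad f)$ with $\Gamma=\Lambda/\jacrad$, but it then dualises over the finite-dimensional $k$-algebra $\Gamma$: using $\Hom{k}(-,k)$ together with \Cref{lem-symmetry-between-simples-ext} it identifies this Hom-space with the $k$-dual of $f\jacrad e/f\jacrad^{2}e$, so no multiplicities ever appear. You instead decompose $\jacrad e/\jacrad^{2}e$ into simples and count via Schur's lemma and the Peirce computation $f(\jacrad e/\jacrad^{2}e)\cong f\jacrad e/f\jacrad^{2}e$; this is more elementary (no duality needed for this half) and has the virtue of making the division-ring factor $\dim_{k}\End{\lMod{\Lambda}}(\Lambda f/\jacrad f)$ visible on both sides, which is exactly the point where one might otherwise worry, since $e\Lambda e/e\jacrad e$ need not equal $k$ in this generality. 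For the equality with the third term, the paper simply runs the mirrored right-module argument (``by symmetry''), whereas you invoke the Matlis-dual isomorphism $\Ext{1}{\lMod{\Lambda}}(L,N)\cong\Ext{1}{\rMod{\Lambda}}(\matdual{\Lambda}{}(N),\matdual{\Lambda}{}(L))$ from \S\ref{sec-ext-sym-simples-matlis} together with \Cref{lem-symmetry-between-simples-ext}; this is legitimate here because the lemma sits under \Cref{setup-completeness-of-R-assumtpion}, so $R$ is complete and the simples have finite length over $R$, and your observation that $\maxideal$ annihilates both Ext-groups (so the $R$-isomorphism is $k$-linear and dimensions match) closes the only gap one could raise against that shortcut. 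Your remark that $\Ext^{1}$ between these finite-length modules can be computed in $\lmod{\Lambda}$ or $\lMod{\Lambda}$ interchangeably is also a point the paper glosses over when it applies \Cref{lem-proj-covers-ext-between-simples} in \Cref{lem-counting-arrows-in-quiver}, so it is welcome care rather than an obstacle.
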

\begin{proof} 
As we saw in \S\ref{subsec-rad-topol-and-krull-intersect} the ring $\Gamma=\Lambda /\jacrad$ is a finite-dimensional algebra over the residue field $k$. 
Considering $\Gamma$ instead of $\Lambda$ and $k$ as a complete local ring in the notation of \S\ref{sec-completeness-closure}, the Matlis dual $\matdual{\Gamma}{}$ specialises to the $k$-dual $\Hom{k}(-,k)$ considered as a functor $\lfl{\Gamma}{k}\to \left(\lfl{\Gamma}{k}\right)^{\text{op}}$ where $\lfl{\Gamma}{k}$ is the category of finite-dimensional left $\Gamma$-modules. 
Let  $S=\Lambda f/\jacrad f$, $T=\Lambda e/\jacrad e$ and $U=\jacrad e$. Consider the $k$-vector spaces
\[
\begin{array}{ccc}
E=\Ext{1}{\lMod{\Lambda}}(T,S), 
     &
F = \Hom{\lMod{\Lambda}}(U,S)
     &
G= \Hom{\lMod{\Gamma}}(U/\jacrad U,S).   
\end{array}
\]  
By \Cref{lem-proj-covers-ext-between-simples} we have  $E\cong F$ as $e\Lambda e/e\jacrad e$-modules. 
Since $\rad(\jacrad f)=\jacrad^{2}f$ lies in the kernel of any map $\jacrad f\to \Lambda e/\jacrad e$ we have $F=G$. 
Using that $\matdual{\Gamma}{}$ is a duality, and using  \Cref{lem-symmetry-between-simples-ext}, as $k$-vector spaces we have 
\[
\begin{split}
    G\cong 
\Hom{\rMod{\Gamma}}(\matdual{\Gamma}{}\left(S\right),\matdual{\Gamma}{}\left(U/\jacrad U\right))
\cong 
\Hom{\rMod{\Gamma}}(e\Gamma,\matdual{\Gamma}{}\left(U/\jacrad U\right))
\cong  \matdual{\Gamma}{}\left(U/\jacrad U\right)e. 
\end{split}
\]
Let $V=e\Lambda f/e\jacrad^{2} f$ considered only as a $k$-vector space. 
Consider the map $\Hom{k}(V,k)\to \matdual{\Gamma}{}\left(U/\jacrad U\right)$ given by post-composing with the map $U/\jacrad U\to V$ defined by left-multiplication with $e$. 
It is straightforward to check this map is injective and its image is $\matdual{\Gamma}{}\left(U/\jacrad U\right)e$. 
Hence $\dim_{k}(V)=\dim_{k}(E)$. 
The final equality   follows immediately by symmetry 
and \Cref{lem-symmetry-between-simples-ext}.
\end{proof}

\subsubsection{Quivers of module-finite basic algebras over complete local rings}
\label{sec-quiv-complete-case}

Using what we have gathered so far we can see that the left and right quivers are dual to one another when $\Lambda$ is basic.

\begin{lem}
    \label{lem-constant-dim-lem}
    Let $\Lambda$ be basic. 
    Then $a_{l}(i,j)=a_{r}(j,i)$ for each $i,j=1,\dots,n$ and consequently the left quiver $Q_{l}(\Lambda)$ of $\Lambda$ is the opposite quiver of the right quiver $Q_{r}(\Lambda)$ of $\Lambda$. 
\end{lem}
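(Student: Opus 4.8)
The plan is to reduce the equality $a_l(i,j)=a_r(j,i)$ to the dimension computation already established in \Cref{lem-dimension-number-of-arrows}, via the arrow-counting description of \Cref{lem-counting-arrows-in-quiver}. First I would recall that by \Cref{lem-counting-arrows-in-quiver}(1) the integer $a_l(i,j)$ equals the rank of $\Ext{1}{\lMod{\Lambda}}(S_i,S_j)$ as a left $\End{\lMod{\Lambda}}(S_j)$-module, where $S_h=\Lambda e_h/\jacrad e_h$, and dually by \Cref{lem-counting-arrows-in-quiver}(2) the integer $a_r(j,i)$ equals the rank of $\Ext{1}{\rMod{\Lambda}}(S_j',S_i')$ as a left $\End{\rMod{\Lambda}}(S_i')$-module, where $S_h'=e_h\Lambda/e_h\jacrad$. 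Since $\Lambda$ is elementary here — or at least basic with $\Lambda/\jacrad$ a product of copies of $k$ in the situations of interest — the endomorphism rings $\End{\lMod{\Lambda}}(S_j)$ and $\End{\rMod{\Lambda}}(S_i')$ are both $k$, so "rank" means $k$-dimension, and it suffices to show $\dim_k \Ext{1}{\lMod{\Lambda}}(S_i,S_j)=\dim_k\Ext{1}{\rMod{\Lambda}}(S_j',S_i')$.

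That last equality is exactly the content of \Cref{lem-dimension-number-of-arrows} (with the roles of $e$ and $f$ matched appropriately): both sides equal $\dim_k\!\left(e_j\jacrad e_i/e_j\jacrad^2 e_i\right)$, the dimension of the relevant graded piece of the radical. So the proof is essentially: invoke \Cref{lem-counting-arrows-in-quiver} to convert the arrow counts into $\Ext^1$-ranks, invoke \Cref{lem-dimension-number-of-arrows} to identify those ranks with a common dimension, and conclude. One has to be a little careful about the $\End$-module structure versus bare $k$-dimension — the cleanest route is to observe that when $\Lambda$ is basic and module-finite over the complete local $R$ with residue field $k$, each simple is a finite-dimensional $k$-vector space and its endomorphism ring is a finite division $k$-algebra, and the "rank" in \Cref{lem-counting-arrows-in-quiver} already accounts for this, so the counts agree with the ranks computed over these division rings; alternatively one notes \Cref{lem-dimension-number-of-arrows} is phrased over $k$ and the two division-ring ranks coincide because the same middle term $\dim_k(f\jacrad e/f\jacrad^2 e)$ controls both.

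For the consequence about opposite quivers: the left quiver $Q_l(\Lambda)$ has vertices indexed by $\{1,\dots,n\}$ with $a_l(i,j)$ arrows from $i$ to $j$, and the right quiver $Q_r(\Lambda)$ has $a_r(i,j)$ arrows from $i$ to $j$; under the identification of both vertex sets with $\{1,\dots,n\}$ (via $v_i\leftrightarrow u_i\leftrightarrow e_i$), the equality $a_l(i,j)=a_r(j,i)$ says precisely that reversing all arrows of $Q_r(\Lambda)$ produces $Q_l(\Lambda)$, i.e. $Q_l(\Lambda)=Q_r(\Lambda)^{\mathrm{op}}$. I expect the only real subtlety — the "hard part," such as it is — to be bookkeeping: keeping the index transposition $i\leftrightarrow j$ straight when passing from the left-module $\Ext$ to the right-module $\Ext$, and making sure the $\End$-ranks in \Cref{lem-counting-arrows-in-quiver} are being compared with the $k$-dimensions of \Cref{lem-dimension-number-of-arrows} over the correct division rings; no new ideas beyond the two cited lemmas should be needed.
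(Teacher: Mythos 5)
Your proposal is correct and follows essentially the same route as the paper: convert $a_{l}(i,j)$ and $a_{r}(j,i)$ into ranks of $\Ext{1}{\lMod{\Lambda}}(S_{i},S_{j})$ and $\Ext{1}{\rMod{\Lambda}}(S_{j}',S_{i}')$ via \Cref{lem-counting-arrows-in-quiver}, then equate them through the common quantity $\dim_{k}(e_{j}\jacrad e_{i}/e_{j}\jacrad^{2}e_{i})$ supplied by \Cref{lem-dimension-number-of-arrows}. The only difference is that your main line assumes the endomorphism rings of the simples are $k$ (the elementary case), whereas the paper keeps the basic hypothesis and instead divides both $\Ext$-ranks by $\dim_{k}\bigl(e_{j}\Lambda e_{j}/e_{j}\jacrad e_{j}\bigr)$ to pass to $k$-dimensions --- precisely the ``alternatively'' route you sketch --- so no new idea is missing from your argument relative to the paper's.
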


\begin{proof}
   For each $h=1,\dots,n$  let $\Lambda(h)=e_{h}\Lambda e_{h}/e_{h}\jacrad e_{h}$, $S_{h}=\Lambda e_{h}/\jacrad e_{h}$ and  $S_{h}'=e_{h}\Lambda /e_{h}\jacrad $  giving $k$-algebra isomorphisms between $\End{\lMod{\Lambda}}(S_{h})$, $\Lambda(h)$ and $ \End{\rMod{\Lambda}}(S_{h}')$ for fixed $h$.  
    By \Cref{lem-counting-arrows-in-quiver},
    \[
    \begin{array}{c}
    a_{l}(i,j)=
    \dim_{\Lambda(j)}\left(
    \Ext{1}{\lMod{\Lambda}}(S_{i},S_{j})\right)=
    \dfrac{
    \dim_{k}\left(
    \Ext{1}{\lMod{\Lambda}}(S_{i},S_{j})
    \right)
    }
    {
    \dim_{k}\left(\Lambda(j)\right)
    }.
    \end{array}
    \]
   Similarly $\dim_{k}\left(
    \Ext{1}{\rMod{\Lambda}}(S_{j}',S_{i}')
    \right)=a_{r}(j,i)\dim_{k}(\Lambda(j))
    $. 
    Thus the assertion follows by \Cref{lem-dimension-number-of-arrows}. 
\end{proof}

\begin{defn}
\label{def-gabriel-quiver}
    From \Cref{setup-completeness-of-R-assumtpion} we are assuming that $\Lambda$ is a module-finite algebra over a complete local noetherian ring $R$, and so $\Lambda$ is semiperfect. 
    If $\Lambda$ is basic we write   $Q(\Lambda)$ for the \emph{Gabriel quiver} of $\Lambda$, defined by taking the left\footnote{The left and right quivers are opposite one another by \Cref{lem-constant-dim-lem}. 
By our convention (see \Cref{arrow-direction-convention-remark}) representations of a given quiver are equivalent to left modules over the path algebra, hence our choice here.} quiver of $\Lambda$. 

So, $Q(\Lambda)$ is the (left) quiver defined in \S\ref{quiver-noetherian-semiperfect}, described as follows. 
There is a unique integer $n>0$ such that any complete and orthogonal set of local idempotents for $\Lambda$ has $n$ elements; see \S\ref{subsubsec-idempotents}. 
Furthermore, after choosing such a set $\{e_{1},\dots,e_{n}\}$ of idempotents, for any $i,j$ the multiplicity $a(i,j)$ of $\Lambda e_{j}$ as a summand of  a projective cover of $\jacrad e_{i}$ is uniquely defined. 
From this point $Q(\Lambda)$ is taken to have $n$ distinct vertices $v_{i}$ corresponding to the idempotents $e_{i}$, and taken to have $a(i,j)$ distinct arrows from $v_{i}$ to $v_{j}$. 
\end{defn}

\begin{cor}
\label{cor-basic-biserial-quiver-is-biserial-plus-dim-1-nice}
    Let $\Lambda$ be basic with a complete and orthogonal set $\{e_{1},\dots,e_{n}\}$ of local idempotents. 
    \begin{enumerate}
        \item If $\Lambda$ is  biserial then any vertex $v$ in $Q(\Lambda)$ then there are at most two arrows with head $v$ and there are at most two arrows with tail $v$.  
        \item If  $e_{i}\Lambda e_{i}/e_{i}\jacrad e_{i}\cong k$ for each $i$ then $a(i,j)=\dim_{k}(e_{j}(\jacrad/\jacrad^{2})e_{i})$ for each $i$ and each $j$. 
    \end{enumerate}
\end{cor}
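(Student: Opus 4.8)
The plan is to derive \Cref{cor-basic-biserial-quiver-is-biserial-plus-dim-1-nice} as an essentially immediate consequence of the machinery already assembled in \S\ref{subsection-noetherian-semiperfect} and \S\ref{sec-mod-finite-over-complete}, so the ``proof'' is really a matter of quoting the right earlier statements.

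\emph{Part (1).} First I would observe that $\Lambda$ is a module-finite algebra over a complete local noetherian ring (by \Cref{setup-completeness-of-R-assumtpion}), hence semiperfect, and that $\Lambda$ is noetherian since $R$ is. Thus $\Lambda$ satisfies the hypotheses of \Cref{setup-noetherian-semiperfect}, so \Cref{quivers-of-biserials} applies: if $\Lambda$ is biserial then every vertex of the left quiver $Q_{l}(\Lambda)$ is the tail of at most two arrows, and dually every vertex of the right quiver $Q_{r}(\Lambda)$ is the tail of at most two arrows. Next I would invoke \Cref{lem-constant-dim-lem}, which tells us $a_{l}(i,j)=a_{r}(j,i)$ and hence $Q_{l}(\Lambda)=Q_{r}(\Lambda)^{\mathrm{op}}$. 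Since $Q(\Lambda)$ is \emph{defined} (see \Cref{def-gabriel-quiver}) as the left quiver, and since an arrow with head $v$ in $Q_{l}(\Lambda)$ is the same thing as an arrow with tail $v$ in $Q_{r}(\Lambda)$, the bound of two arrows at each tail of $Q_{l}(\Lambda)$ together with the bound of two arrows at each tail of $Q_{r}(\Lambda)$ translates to: at most two arrows with tail $v$ and at most two arrows with head $v$ in $Q(\Lambda)$. This finishes (1).

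\emph{Part (2).} Here the hypothesis $e_{i}\Lambda e_{i}/e_{i}\jacrad e_{i}\cong k$ for each $i$ says precisely that the division rings $\Lambda(i)=e_{i}\Lambda e_{i}/e_{i}\jacrad e_{i}$ appearing in the proof of \Cref{lem-constant-dim-lem} are all equal to $k$, equivalently $\End{\lMod{\Lambda}}(S_{i})\cong k$. I would then recall from \Cref{lem-counting-arrows-in-quiver}(1) that $a(i,j)=a_{l}(i,j)$ is the rank of $\Ext{1}{\lMod{\Lambda}}(S_{i},S_{j})$ over $\End{\lMod{\Lambda}}(S_{j})\cong k$, i.e.\ simply its $k$-dimension. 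Finally \Cref{lem-dimension-number-of-arrows} identifies this with $\dim_{k}(e_{j}\jacrad e_{i}/e_{j}\jacrad^{2}e_{i})=\dim_{k}(e_{j}(\jacrad/\jacrad^{2})e_{i})$, where the last rewriting uses that $e_{j}(\jacrad/\jacrad^{2})e_{i}$ is the Peirce component of $\jacrad/\jacrad^{2}$ cut out by $e_{j}$ on the left and $e_{i}$ on the right, which equals $e_{j}\jacrad e_{i}/e_{j}\jacrad^{2}e_{i}$. Chaining these three equalities yields $a(i,j)=\dim_{k}(e_{j}(\jacrad/\jacrad^{2})e_{i})$.

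I do not anticipate a genuine obstacle: every ingredient is in place, and the only real care needed is bookkeeping around the head/tail convention and the opposite-quiver identification in part (1), and around which idempotent sits on the left versus the right in the Peirce component in part (2). I would write the proof in a few lines, citing \Cref{quivers-of-biserials}, \Cref{lem-constant-dim-lem}, \Cref{lem-counting-arrows-in-quiver} and \Cref{lem-dimension-number-of-arrows}, and flagging the convention of \Cref{arrow-direction-convention-remark} so that the reader sees why ``head'' and ``tail'' swap when passing between the left and right quivers.
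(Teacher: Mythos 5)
Your proposal is correct and is essentially the paper's own proof: part (1) is exactly the combination of \Cref{quivers-of-biserials} with \Cref{lem-constant-dim-lem} (using the opposite-quiver identification to convert the tail bound for $Q_{r}(\Lambda)$ into a head bound for $Q(\Lambda)=Q_{l}(\Lambda)$), and part (2) is exactly the combination of \Cref{lem-counting-arrows-in-quiver} with \Cref{lem-dimension-number-of-arrows}, with the same Peirce-component rewriting $e_{j}\jacrad e_{i}/e_{j}\jacrad^{2}e_{i}=e_{j}(\jacrad/\jacrad^{2})e_{i}$. No gaps; your bookkeeping of the head/tail convention matches the paper's intent.
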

\begin{proof}
  For (1) combine \Cref{quivers-of-biserials} and \Cref{lem-constant-dim-lem}, and for (2) combine  \Cref{lem-counting-arrows-in-quiver} and \Cref{lem-dimension-number-of-arrows}. 
\end{proof}
There are no non-trivial division algebras over an algebraically closed field. 
For $\Lambda$ basic and $k$ algebraically closed  $Q(\Lambda)$ gives the usual construction when $\maxideal=0$; see for example \cite[\S{}II.3, Definition 3.1]{Ass2006}.

\section{Path algebras over local rings}
\label{section-definition}

In \S\ref{section-definition} we forget the assumptions from \S\ref{section-background} and \S\ref{sec-module-finite-algebras}.  
Eventually we use results from these sections, but for now the algebras we consider are constructed from path algebras  and admissible ideals.


Below we recall language commonly used when dealing with path algebras. 

Let $Q=(Q_{0},Q_{1},h,t)$ be a finite quiver. 
Trivial  paths are denoted  $e_{v}$  and defined for any vertex $v$, and we set  $h(e_{v})=v=t(e_{v})$. 
Non-trivial paths of length $n\geq 1$ are denoted  $a_{n}\dots a_{1}$ and are defined for any arrows $a_{1},\dots,a_{n}$ with $t(a_{i+1})=h(a_{i})$ for $1\leq i<n$. 
The head and tail of a non-trivial path $p=a_{n}\dots a_{1}$ are given by $h(p)=h(a_{n})$ and $t(p)=t(a_{1})$. 
The composition of paths $p$ of length $m$ and $q$ of length $n$ with $t(p)=h(q)$, denoted $pq$, is defined by $pq=p$ if $n=0$, $pq=q$ if $m=0$, and arrow concatenation  if $m,n>0$. 

Next we explicitly state some less common terminology. 

By a \emph{subpath} of a path $p$ we mean a path $z$ satisfying $p=qzr$ for some paths $q,r$. 
A \emph{right} subpath of $p$ refers to such a subpath $z$ where $r$ is a trivial path, and \emph{left} subpaths are where $q$ is trivial. 
The \emph{right arrow} of a non-trivial path is its unique length-$1$ right subpath, and likewise the \emph{left arrow} is the unique length-$1$ left subpath. 
Note the language of first and left arrows was used in  \cite[Definition 1.1.13]{Ben2018}.

Recall the following setup and notation from the introduction. 

\begin{setup}
\label{setup-quiver}
Throughout the remainder of the article we use the following set up. 
Let $(R,\maxideal,k)$ be a commutative noetherian local ring. 
Let $Q$ be a finite quiver with path algebra  $RQ$. 
Explicitly $RQ=\bigoplus_{\ell\geq 0} A_{\ell}$  where each $A_{l}$ is $R$-span of length-$\ell$ paths $p$ in $Q$. 
Multiplication in $RQ$  
 is defined by $R$-bilinearly extending
 \[
 \begin{array}{ccc}
 (p,q)\mapsto  pq\quad(t(p)=h(q)),     &  & (p,q)\mapsto  0\quad(t(p)\neq h(q)).
 \end{array}
 \]   
Let $A$ be the ideal in $RQ$ generated by the non-trivial paths. 
Let $\maxideal Q$ be the $R$-submodule of $RQ$ consisting of $R$-linear combinations of paths with coefficients in $\maxideal$. 
We write elements of $RQ$ in the form $\sum r_{p}p$ where $r_{p}\in R$ is finitely supported over the paths $p$ in $Q$. For example, we have 
\[
\begin{array}{cc}
A=\bigoplus_{\ell\geq 1}A_{\ell}=\{\sum r_{p}p\in RQ\mid r_{e_{v}}=0\hspace{1.5mm}\forall v\in Q_{0}\},
&
\maxideal Q=\{\sum r_{p}p\in RQ\mid r_{p}\in\maxideal\hspace{1.5mm}\forall p\}.
\end{array}
\] 
Extending the ring map $R\to k$ defines an $R$-algebra map $RQ\to kQ$ with kernel $\maxideal Q$. 

In addition to the local ring $(R,\maxideal,k)$, the quiver $Q$ and the associated notation $A_{\ell}$, $RQ$, $A$ and $\maxideal Q$ introduced above; throughout \S\ref{section-definition} we fix a two-sided ideal $I\triangleleft RQ$,  let $\Lambda=RQ/I$ and let $\jacrad=\rad(\Lambda)$. 
\end{setup}
%

Note that if $r\in R$  then its image under the ring map $R\to RQ$ is the sum $\sum_{v} re_{v}$ over the vertices $v$. 
So if $r$ is regular in $R$ then its image is regular in $RQ$ since the paths in $Q$ form an $R$-basis of $RQ$.

\subsection{Arrow-radical, arrow-distinct and permissible ideals}

We recall terms from the introduction.  
\label{subsection-path-algebras-over-local-rings}

\begin{defn}
\label{defn-arrow-radical}
We say the ideal $I$ is \emph{arrow}-\emph{radical} if (i)--(ii) hold for any vertex $v$. 
    \begin{enumerate}[label={\upshape(\roman*)}]
        \item $\sum_{t(a)=v\,} \Lambda a$ is the unique maximal left $\Lambda$-submodule of $\Lambda e_{v}$. 
        \item $\sum_{h(a)=v\,} a\Lambda$ is the unique maximal right $\Lambda$-submodule of $e_{v}\Lambda$.
    \end{enumerate} 
\end{defn}

Note that right multiplication by an arrow   $a$ with head $v$ and tail $u$ in $Q$ defines an epimorphism   $\Lambda e_{v}\to \Lambda a$ of left $\Lambda$-modules. 
 Hence when $I$ is arrow-radical this map is a projective cover in the category of left modules. 
    If $Q$ is a single loop $x$ 
 and if $R=\mathbb{R}$ then the ideal $I=\langle x^{2}+1\rangle$ is not arrow-radical since $RQ/I\cong \mathbb{C}$ . 


\begin{example}
\label{running-2-by-2-example-part-1}
Assume $R$ is a discrete valuation ring with uniformizer $\pi$. 
So  $\maxideal=\langle \pi\rangle$ and  $\pi$ is regular in $R$. 
Let $Q$ be the quiver consisting of a pair of loops  $a$ and $b$ incident at a single vertex $u$. 
Let $I=\langle a^{2},b^{2},ab+ba-\pi e\rangle$ where $e=e_{u}$. 
We claim that $I$ is arrow-radical.  
We check condition (i) from \Cref{defn-arrow-radical} holds, and note that checking (ii) holds is similar. 
To do so we begin by claiming that $\Lambda a+\Lambda b$ is a superfluous submodule of $\Lambda e$. 
So we let $\Lambda a + \Lambda b + M = \Lambda e$ and in order to prove our claim we must show $e\in M$. 

Note that any coset of $I$ can be represented by an $R$-linear combination of paths that does not contain $a^{2}$ (respectively, 
 $b^{2}$) as a subpath. 
Furthermore, the remaining paths of length at least $3$ can be rewritten modulo $I$. 
For example multiplying $ab+ba-\pi e$ on the right by $a$ gives  $aba-a\pi \in I$, meaning  $baba-ba\pi\in I$ and so on. 
Hence  and likewise we have $e-z+I\in M$ where $z=qa+rb+sab+tba$ for some $q,r,s,t\in R$. 

Observe firstly that $az-rab-t\pi a\in I$ and $bz-qba-s\pi b\in I$. 
This gives 
\[
(qa+rb)z-\pi (qre+qta+rsb)\in I.
\]
Using the aforementioned expressions again we have the new observations that $abz-q\pi a-s\pi ab\in I$ and that $baz-r\pi b-t\pi ba\in I$. 
Taking another $R$-linear combination, but of these new expressions, we have 
\[
(tab+sba)z-\pi (qta +rsb+st\pi e)\in I.
\]
Taking the difference of the centred expressions gives $xz-\pi ne\in I$ where $x=qa+rb-sba-tab$ and $n=qr+st\pi $. 
Note also that $x-z-(s+t)\pi e\in I$ and so 
\[
(1_{R}+\pi (s+t-n))e+I=(e+x)(e-z)+I\in M.
\]
Since $R$ is local, for any $m\in\maxideal$ the element $1_{R}+m\in R$ is a unit. 
Altogether we have $e\in M$. 
Finally we have that $\Lambda a+\Lambda b$ is superfluous in $\Lambda e$, and so $\Lambda a+\Lambda b\subseteq \rad(\Lambda e)$. 

Now consider the quotient $N=\Lambda e/(\Lambda a+\Lambda b)$ and note that 
 $\Lambda \pi =\Lambda (ab+ba)$ and so the kernel of the non-zero surjective  map $R\to N$ sending $r$ to $re+\Lambda a+\Lambda b$ contains $\maxideal$. 
 In other words, $\Lambda a+\Lambda b$ is maximal, and hence is the unique maximal submodule of $\Lambda e$. 
\end{example}

\begin{defn}
\label{defn-arrow-distinct}
We say $I\triangleleft RQ$ is   \emph{arrow}-\emph{distinct} if (iii)--(iv) hold for  any arrow $a$. 
\begin{enumerate}[label={\upshape(\roman*)}]
\setcounter{enumi}{2}
    \item $\Lambda a\cap \sum \Lambda b\subseteq \rad(\Lambda a)$ where the sum runs through the arrows $b\neq a$ with $t(b)=t(a)$. 
    \item $a\Lambda \cap \sum b\Lambda \subseteq \rad(a\Lambda )$ where the sum runs through the arrows $b\neq a$ with $h(b)=h(a)$. 
\end{enumerate}
\end{defn}

In \Cref{running-2-by-2-example-part-3} we show that the ideal from \Cref{running-2-by-2-example-part-1} is arrow-distinct.

\begin{defn}
\label{defn-$Z$-permissible}
With respect to  $I$ a path $p$ in  $Q$ is said to be \emph{inadmissible} if $p\in I$ and said to be \emph{admissible} if $p\notin I$. 
We say that $I$ is \emph{permissible} provided every arrow is admissible.    
\end{defn}
 
If $d\geq 0$ is the minimum length of an inadmissible path in $Q$ then $\Lambda p\neq 0\neq p \Lambda$ for any path $p$ of length less than $d$, and $I$ is permissible if and only if $d\geq 2$. 
Suppose $d=1$. 
Then an arrow  $a$ lies in $I$ and we consider the subquiver $Q'$ of $Q$   found by deleting $a$. 
The composition of the inclusion $RQ'\to RQ$ with the projection $RQ\to RQ/I$ is a surjective ring homomorphism. 
Hence assuming $I$ is permissible is a mild condition. 
Note that if $I$ is permissible then each trivial path lies outside $I$.

\begin{example}
\label{running-2-by-2-example-part-2}
We continue with \Cref{running-2-by-2-example-part-1} where $Q$ is a pair of loops $a,b$ at a vertex $u$, and we let $e=e_{u}$. 
We claim that $I=\langle a^{2},b^{2},ab+ba-\pi e\rangle$ is permissible. 
For a contradiction suppose $a\in I$. 
So there exists $x\in A^{2}=\langle a^{2},b^{2},ab,ba\rangle$ and $y\in \maxideal Q$ such that $a=x+y$. 
Let $A'_{\ell}$ be the $k$-space of the length-$\ell$ paths in $kQ$. 
Since $a-x$ is sent to $0$ under the ring map $RQ\to kQ$, the image of $a\in RQ$ lies in $A'_{1}\cap \bigoplus_{\ell\geq 2}A'_{\ell}=0$. 
Since $a$ defines a non-zero element of $kQ$ this is a contradiction. 
So $a\in I$ and  $b\notin I$ by symmetry. 

We now assert that the non-trivial admissible paths in $Q$ are defined by words that alternate in the letters $a$ and $b$. 
Let $Z=\{a^{2},b^{2}\}$ and let $G=\langle Z\rangle$, the two-sided ideal in $RQ$ generated by $Z$. 
Hence $G$ is the free $R$-span of the paths that have either $a^{2}$ or $b^{2}$ as a subpath. 
Let $H$ be the $R$-span the alternating paths, and so $RQ=G\oplus H$ as $R$-modules. 
Since $Z\subseteq I$ we have $G\subseteq I$. 
Let $H'$ be the $R$-span of the non-trivial alternating paths with right arrow $a$.  
Likewise let $H''$ be the $R$-span of admissible non-trivial paths with right arrow $b$. 
This means $H\cap A=H'\oplus H''$ in $RQ$. 

We now state and prove a technical claim. 
We claim that for any $v\in A\cap I$ there exist elements $y\in H'$ and $z\in H''$ such that $\pi(y+z)+v=yba+zab$. 

Let $c=ab+ba$ and note that $ac-ca,bc-cb\in G$. 
Let $v\in A\cap I$. 
Since $I$ is generated by $c-\pi e$ together with the elements of $Z$ this means $v-u(c-\pi e)\in G$ for some $u\in RQ$. 
Since $RQ=G\oplus H$ we can write $u=w+x$ for some $w\in G$ and $x\in H$. 
Hence $v-x(c-\pi e)\in G$ where $x\in H$.  

Since $\pi x +v\in xc+G\subseteq A^{2}$ and $v\in A$ we must have $x\in A$. 
Since $x\in H\cap A$ there exist unique $y\in H'$ and $z\in H''$ such that $x=y+z$. 
This gives $ya,zb\in G$ and $yba,zab\in H$ and so
\[
H\ni v-yba-zab +\pi x =v-x(c-\pi e)+yab+zba\in G
\]
and since $H\cap G=0$ this means $yba+zab=\pi x +v$, giving our claim.  
For our assertion it is necessary and sufficient to prove that $I$ cannot contain alternating words in $a$ and $b$. 
For a contradiction assume it does. 

For simplicity we just show $a(ba)^{n}\notin I$ for all $n>0$. 
Assume otherwise for a contradiction. 
Note,
\[
a(ba)^{n}=(ab)^{n-1}a(ab+ba-\pi e)-(ab)^{n-1}a^{2}b+\pi a(ba)^{n-1}
\]
and so by induction we must have $\pi^{n}a \in I$. 

Let $v=\pi^{n}a$. 
Using our technical claim we choose $y\in H'$ and $z\in H''$ such that  $yba+zab=\pi x +v\in \maxideal Q$ where $x=y+z$. 
Since $yba\in H'$ and $zab\in H''$ this means $y,z\in \maxideal Q$. 
So $x=\pi x'$ for some $x'\in RQ$, and so  $x'\in H$. 
Hence $\pi(\pi^{n-1}a-x'(c-\pi e))\in \maxideal Q\cap G$. 
Written as an $R$-linear combination of non-alternating paths with coefficients in $\maxideal$, it follows that $a\pi^{n-1}-x'(c-\pi e)\in G$. 
This is because  $\pi e$ is regular in $RQ$.  

Iterating we find  $x''\in H$ with $a-x''(c-\pi)\in G$. 
So $a\in I$, but we have seen that  this is impossible. 
Thus the non-trivial  paths outside $I=\langle a^{2},b^{2},ab+ba-\pi e\rangle$ are precisely the alternating words

\end{example}        

\begin{proof}[Proof of  \Cref{thm-characterising-semiperfect-mod-fin-alg-with-brick-simples}. ]

We are assuming that $\Gamma$ is a module-finite $R$-algebra that is semiperfect and such that $\Gamma/\mathcal{K}\cong k^{n}$ where $\mathcal{K}=\rad(\Gamma)$. 
Hence $\Gamma$ is basic and we can and do fix a complete and orthogonal set $\{e_{1},\dots,e_{n}\}$ of pair-wise non-isomorphic local idempotents. 
Define a quiver $Q$ as follows. 
We let $v_{1},\dots,v_{n}$ be a complete list of distinct vertices in $Q$. 
Let $d(i,j)=\dim_{k}(e_{j}(\mathcal{K}/\mathcal{K}^{2})e_{i})$ for each $i$ and $j$, and in this case we let $x_{ij}(1),\dots,x_{ij}(d(i,j))$ be the distinct arrows in $Q$ from $v_{i}$ to $v_{j}$.  

Recall from \S\ref{subsec-rad-topol-and-krull-intersect} (see in particular \cite[Proposition 20.6]{Lam1991}) that, since $\Gamma$ is module-finite over the noetherian local ring $R$, we have $\mathcal{K}^{m}\subseteq \Gamma \maxideal \subseteq \mathcal{K}$ for some $m>0$. 
For the moment fix $i,j=1,\dots,n$.  
Define elements $\gamma_{ij}(\ell)\in e_{j}\mathcal{K} e_{i}\setminus e_{j}\mathcal{K}^{2}e_{i}$ for each $\ell=1,\dots,d(i,j)$ by lifting a $k$-basis 
of $e_{j}(\mathcal{K}/\mathcal{K}^{2})e_{i}$. 
Let $X_{ij}=e_{j} \mathcal{K} e_{i}$. 
Let $c>0$ be an integer. 
Let $\boldsymbol{n}_{c}$ be the set of length-$(c+1)$ sequences $\boldsymbol{i}=(i_{0},\dots,i_{c})$ of integers with $1\leq i_{t}\leq n$ for all $t$. 
For each $\boldsymbol{i}\in \boldsymbol{n}_{c}$ let $\boldsymbol{i}_{c}$ be the set of length-$c$ sequences $\boldsymbol{\ell}_{c}=(\ell_{1},\dots,\ell_{c})$ of integers with $1\leq \ell_{t} \leq d(i_{t},i_{t+1})$ for all $t<c$. 
Now for each $c$, each $\boldsymbol{i}\in \boldsymbol{n}_{c}$ and each $\boldsymbol{\ell}\in \boldsymbol{i}_{c}$ as above let $\boldsymbol{\gamma}_{\boldsymbol{i}}(\boldsymbol{\ell})=\gamma_{i_{c-1}i_{c}}(\ell_{c})\dots \gamma_{i_{0}i_{1}}(\ell_{1})$. 

We claim that any element of $X_{ij}$ can be written as a sum of an element of $e_{j}\Gamma\maxideal e_{i}$ together with an $R$-linear combination of the elements $\boldsymbol{\gamma}_{\boldsymbol{i}}(\boldsymbol{\ell})$ where  $\boldsymbol{i}\in \bigsqcup_{t=1}^{m-1}\boldsymbol{n}_{m-1}$ and $\boldsymbol{\ell}\in \boldsymbol{i}_{t}$ whenever $\boldsymbol{i}\in \boldsymbol{n}_{t}$

In case $c=1$ any element of $e_{j}\mathcal{K} e_{i}$ is the sum of one from $e_{j}\mathcal{K}^{2}e_{i}$ together with an $R$-linear combination of the elements $\boldsymbol{\gamma}_{(i,j)}(\ell)=\gamma_{ij}(\ell)$ where  $1\leq \ell\leq d(i,j)$.  
Writing $e_{j}\mathcal{K}^{2}e_{i}=\bigoplus_{h=1}^{n} e_{j}\mathcal{K} e_{h}\mathcal{K} e_{i}$, any element of $e_{j}\mathcal{K}^{2}e_{i}$ is a sum of products of elements from $e_{j}\mathcal{K} e_{h}$ and $e_{h}\mathcal{K} e_{i}$ as $h$ varies between $1$ and $n$. 

By the case $c=1$ any element of $e_{j}\mathcal{K} e_{h}$ is a sum one from $e_{j}\mathcal{K}^{2}e_{h}$ together with an $R$-linear combination of the elements $\gamma_{hj}(\ell)$ where $1\leq \ell\leq d(h,j)$.  
Combining these cases, we have that any element of $e_{j}\mathcal{K}^{2}e_{i}$ is a sum of an element from $e_{j}\mathcal{K}^{3}e_{i}$ together with an $R$-linear combination of the elements $\boldsymbol{\gamma}_{(i,h,j)}(\ell_{1},\ell_{2})=\gamma_{hj}(\ell_{2})\gamma_{ih}(\ell_{1})$ with $1\leq \ell_{1}\leq d(i,h)$ and $1\leq \ell_{2}\leq d(h,j)$. 

By iterating the argument above we have that for any $c>0$ each element of $X_{ij}$ can be written as the sum of an element from $e_{j}\mathcal{K}^{c+1}e_{i}$ together with an $R$-linear combination of elements of the form $\boldsymbol{\gamma}_{\boldsymbol{i}}(\boldsymbol{\ell})$ where $\boldsymbol{i}\in \bigsqcup_{t=1}^{c}\boldsymbol{n}_{c}$ and $\boldsymbol{\ell}\in \boldsymbol{i}_{t}$ whenever $\boldsymbol{i}\in \boldsymbol{n}_{t}$. 
By taking $c=m-1$, our claim follows since $\mathcal{K}^{m}\subseteq \Gamma \maxideal$. 

Given that $\Gamma/\mathcal{K}\cong k^{\vert Q_{0}\vert}$ we have that $e_{i}\Gamma e_{i}/e_{i}\mathcal{K} e_{i}\cong k$ for each $i$, and so any element of $e_{i}\Gamma e_{i}$ is a sum of one from $e_{i}\mathcal{K}e_{i}$ and an element of the form $re_{i}$ with $r\in R$. 
Since $\Gamma$ is basic we have $e_{j}\Gamma e_{i}=X_{ij}$ whenever $i\neq j$. 
Let $P$ denote the finite subset of $\Gamma$ consisting of the elements $e_{i}$ as $i$ runs from $1$ to $n$, together with the elements $\boldsymbol{\gamma}_{\boldsymbol{i}}$ 
where  $\boldsymbol{i}\in \bigsqcup_{t=1}^{m-1}\boldsymbol{n}_{m-1}$ and $\boldsymbol{\ell}\in \boldsymbol{i}_{t}$ whenever $\boldsymbol{i}\in \boldsymbol{n}_{t}$.

So far we have shown that any element of $\Gamma$ is an $R$-linear combination of the elements from $P$ together with an element from $\Gamma \maxideal$. 
For each $i$ let $e'_{i}=e_{u}$ where $u=v_{i}$. 
Define a function $\varphi\colon RQ\to \Gamma$ by sending $e'_{i}$ to $e_{i}$ and sending $x_{ij}(\ell)$ to $\gamma_{ij}(\ell)$ for each $\ell$ with $1\leq \ell\leq d(i,j)$. 
It is straightforward to check that 
$\varphi$ is an $R$-algebra map. 
Let $M$ be the $R$-submodule of $\Gamma$ generated by $P$. 
Recall $\Gamma$ is assumed to be a finitely generated $R$-module. 
Our summary of what was shown above gives $P+\Gamma \maxideal=\Gamma$ and so $P=\Gamma$ by Nakayama's lemma. 
By construction $P\subseteq \im(\varphi)$, and so altogether $\varphi$ is surjective. 

Let $I=\ker(\varphi)$. 
To complete the proof we show that $I$ is arrow-radical, arrow-distinct and permissible. 

For any vertex $v_{i}$ in $Q$ the radical $\mathcal{K}e_{i}$ of $\Gamma e_{i}$ is the $\Gamma$-submodule generated by the images of the arrows $x_{ij}^{\ell}$ as $\ell$ runs from $1$ to $d(i,j)$ as $j$ runs from $1$ to $n$. 
Hence, and likewise, we have that $I$ is arrow-radical. 

To show that $I$ is arrow-distinct we fix an arrow in $Q$. 
Hence fix $i,h$ and $\ell'$ where $1\leq i,h\leq n$ and  $1\leq \ell'\leq d(g,h)$. 
Let $\lambda\in \Gamma \gamma_{ih}(\ell') \cap \sum_{j,\ell}\Gamma \gamma_{ij}(\ell)$ where $1\leq j\leq n$ and $1\leq \ell\leq d(i,j)$ for each $j$, and where $(j,\ell)\neq (h,\ell')$. 
Note $\mathcal{K}^{2}$ is generated by images of paths in $Q$ of length at least $2$. 

Hence there exist $r_{ig\partial}\in R$ for each $g=1,\dots,n$ 
 and each $\partial=1,\dots,d(i,g)$ such that 
\[
\begin{array}{ccc}
\lambda-r_{ih\ell'}\gamma_{ih}(\ell'),\,\,
\lambda-\sum_{(j,\ell)\neq (h,\ell')}r_{ij\ell}\gamma_{ij}(\ell)\in \mathcal{K}^{2}e_{i},
&
\sum_{\partial=1}^{d(i,h)}r_{ih\partial}\gamma_{ih}(\partial)\in e_{h}\mathcal{K}^{2}e_{i}.
\end{array}
\]
The containment on the right follows from the containments on the left by multiplying both expressions by $e_{h}$ and taking the difference. 
Recall the cosets $\gamma_{ih}(\partial)+e_{h}\mathcal{K}^{2}e_{i}$ define a $k$-basis for $e_{h}\mathcal{K}e_{i}/e_{h}\mathcal{K}^{2}e_{i}$. 
In particular $r_{ih\ell'}\in \maxideal$ and so $\lambda\in \Gamma \maxideal\gamma_{ih}(\ell')$. 
Since $\Gamma\maxideal \subseteq \mathcal{K}$ we have $\lambda \in \rad(\Gamma \gamma_{(i,h)}(\ell'))$. 
Thus, and by symmetry, $I$ is arrow-distinct. 
Using again the fact we lifted a $k$-basis we must have $\gamma_{ih}(\partial)\notin e_{h}\mathcal{K}^{2}e_{i}$, and so in particular $x_{ih\partial}\notin I$, for any $i$, $h$ and $\partial$. 
Thus $I$ is permissible. 
\end{proof}

\subsection{Admissible ideals}
\label{admissible-ideals} 
In \S\ref{admissible-ideals} continue to keep the notation and assumptions from \Cref{setup-quiver}. 
\begin{defn}
\label{defn-admissible}
We say that $I$ is \emph{admissible}  if it is \emph{bounded above} and \emph{bounded below} in the following sense.  
\begin{enumerate}[label={\upshape(\alph*)}]
        \item $I$ is said to be \emph{bounded above} if $I\subseteq A+ \maxideal Q$ and $I\cap A \subseteq A^{2}+ \maxideal Q\cap A$.
        \item $I$ is said to be \emph{bounded below} if $A^{m}\subseteq I+ \maxideal Q$ and $\maxideal Q\subseteq I+\sum_{\ell=1}^{n} A_{\ell}$ for some integers $m,n>0$. 
        \end{enumerate} 
\end{defn}

In \cite[p. 521, Definition]{PenRag1988}, condition (a) is replaced by the requirement that  $I \subseteq A^{2}+ \maxideal Q\cap A$. 
Note that if $I \subseteq A^{2}+ \maxideal Q\cap A$ then (a) holds. 
In \cite[p. 2619]{CarSal1987} the authors require, in addition to conditions (a)--(b), that  
the canonical\footnote{This is the composition of the algebra map $R\to RQ$ and the quotient $RQ\to RQ/I$ that makes $RQ/I$ an $R$-algebra. 
In \cite{CarSal1987} these authors require that the preimage of $I$ under the map $R\to RQ$ is $0$. } ring homomorphism $R\to RQ/I$ is injective. 
We compare these notions in \Cref{comparing-example}. 

\begin{example}
    \label{comparing-example}
    Let $p>0$ be a prime integer and $R=\mathbb{Z}/p^{3}\mathbb{Z}$. 
    Let $Q$ consist of one loop $x$ at a vertex $v$ and let $e=e_{v}$. 
    Here $R Q\cong R[x]$. 
    Let $I=\langle pe-x,x^{2}\rangle$. 
    Here $A=\langle x\rangle$ and $A^{2}=\langle x^{2}\rangle\subset I$. 
    
For the moment suppose $x\in I$.  
Hence we have $r_{i},s_{i}\in R$ finitely supported over $i\in \mathbb{N}$ such that $pe=\sum_{i}(pe-x)r_{i}x^{i}+s_{i}x^{i+2}$. 
Comparing coefficients of powers of $x$ as elements of $R[x]$, this gives $1=pr_{1}-r_{0}$ and $pr_{0}=0$ inside $R$. 
But then we have $r_{0}\in p^{2}\mathbb{Z}/p^{3}\mathbb{Z}$ giving the contradiction $1\in \maxideal$. 
Hence $I$ is permissible.  
Since $A^{2}\subset I$ only the paths $e$ and $x$ are admissible. 
Note also that $p^{2}e=(pe-x)(p+ex)+x^{2}$ lies in $I$. 
So $0\neq p^{2}x\in I\cap A$ meaning $R\to RQ/I$ is not injective, and so $I$ is not admissible in the sense of  \cite{CarSal1987}. 
Furthermore since $pe-x\notin A$  we have $I\nsubseteq A^{2}+\maxideal Q\cap A$, and so $I$ is not admissible in the sense of  \cite{PenRag1988}. 
    
Any element of $A$ has the form  $rx+y$ with $y\in A^{2}$ and so assuming $rx+y\in I$ gives $rx\in I$. 
 Since $x\notin I$ we have $r\in \maxideal$. 
This means $I$ is bounded above. 
Since $A^{2}\subseteq I$ and $pe=pe-x+x\in I+A_{1}$ we have that $I$ is bounded below, and so admissible in our sense. 

Note that $RQ/I\cong SQ/H$ as rings where $S=\mathbb{Z}/p^{2}\mathbb{Z}$ and $H=\langle pe-x,x^{2}\rangle $, and $H$ is admissible in the sense of  \cite{CarSal1987}. 
    Of course, both $RQ/I$ and $SQ/H$ are isomorphic to $\mathbb{Z}/p^{2}\mathbb{Z}$, meaning that $\rad(RQ/I)$ is generated by $x+I$ as a two-sided ideal. 
\end{example}


\begin{prop}
\label{prop-admissible-presentations-extra}
Let $S=R/\mathfrak{h}$ where $\mathfrak{h}$ is the preimage of $I$ under the algebra map $R\to RQ$, and let $H$ be the image of $I$ under the quotient map $RQ\to SQ$. 
Then $\Lambda \cong SQ/H$ and the following statements hold. 
\begin{enumerate}
    \item If $I$ is bounded above in $RQ$ then $H$ is bounded above in $SQ$. 
    \item If $I$ is bounded below in $RQ$ then $H$ is bounded below in $SQ$. 
    \item A path in $Q$ is admissible with respect to $I$ if and only if it is admissible with respect to $H$. 
\end{enumerate}
Hence if $I$ is admissible in the sense of \Cref{defn-admissible} then $H$ is admissible in the sense of \cite{CarSal1987}.
\end{prop}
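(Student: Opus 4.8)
The plan is to realise $SQ$ as a quotient of $RQ$ and then push every defining inclusion through that quotient. First I would introduce the surjective $R$-algebra homomorphism $\pi\colon RQ\to SQ$ that reduces the coefficient of each path modulo $\mathfrak{h}$; an element $\sum r_{p}p$ lies in $\ker\pi$ precisely when every $r_{p}\in\mathfrak{h}$, so $\ker\pi$ is the $R$-span $\mathfrak{h}Q$ of the paths with coefficients in $\mathfrak{h}$. Since $\mathfrak{h}$ is by definition the preimage of $I$ under $r\mapsto\sum_{v}re_{v}$, for $r\in\mathfrak{h}$ and any path $p$ we have $rp=(\sum_{v}re_{v})\,p\in I$, hence $\ker\pi=\mathfrak{h}Q\subseteq I$. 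The isomorphism theorem then yields $SQ\cong RQ/\mathfrak{h}Q$ and $\Lambda=RQ/I\cong SQ/H$ with $H=I/\mathfrak{h}Q=\pi(I)$, and in particular $\pi^{-1}(H)=I$. The case $\Lambda=0$ is vacuous, so I may assume $\mathfrak{h}\subsetneq R$, whence $\mathfrak{h}\subseteq\maxideal$ and $S$ is local with maximal ideal $\maxideal_{S}=\maxideal/\mathfrak{h}$; write $A^{S}_{\ell}$, $A^{S}$ and $\maxideal_{S}Q$ for the analogues in $SQ$ of $A_{\ell}$, $A$ and $\maxideal Q$.

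The next step is to record how $\pi$ interacts with these submodules. Because $\pi$ merely reduces coefficients and $\maxideal$ maps onto $\maxideal_{S}$, one gets $\pi(A_{\ell})=A^{S}_{\ell}$, hence $\pi(A)=A^{S}$, $\pi(A^{m})=(A^{S})^{m}$ and $\pi(\maxideal Q)=\maxideal_{S}Q$. The identity that genuinely uses the shape of $\ker\pi$ is the preimage computation $\pi^{-1}(A^{S})=A+\mathfrak{h}Q$: an element of $RQ$ maps into $A^{S}$ exactly when its trivial-path coefficients lie in $\mathfrak{h}$, and $A+\mathfrak{h}Q$ is precisely that set. I expect this to be the only slightly delicate point, since it is what makes the second half of the bounded-above condition descend to the quotient.

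Granting these, the three numbered statements are short. For (1): applying $\pi$ to $I\subseteq A+\maxideal Q$ gives $H\subseteq A^{S}+\maxideal_{S}Q$; and given $\bar x\in H\cap A^{S}$, pick $x\in I$ with $\pi(x)=\bar x$, so $x\in\pi^{-1}(A^{S})=A+\mathfrak{h}Q$, say $x=a+h$ with $a\in A$ and $h\in\mathfrak{h}Q\subseteq I$; then $a=x-h\in I\cap A\subseteq A^{2}+\maxideal Q\cap A$, and applying $\pi$ (which kills $h$) gives $\bar x=\pi(a)\in(A^{S})^{2}+\maxideal_{S}Q\cap A^{S}$. For (2): apply $\pi$ directly to $A^{m}\subseteq I+\maxideal Q$ and $\maxideal Q\subseteq I+\sum_{\ell=1}^{n}A_{\ell}$ to obtain $(A^{S})^{m}\subseteq H+\maxideal_{S}Q$ and $\maxideal_{S}Q\subseteq H+\sum_{\ell=1}^{n}A^{S}_{\ell}$. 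For (3): a path $p$ satisfies $\pi(p)\in H$ iff $p\in\pi^{-1}(H)=I$, so $p$ is admissible for $I$ exactly when its image is admissible for $H$.

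For the final assertion: if $I$ is admissible in the sense of \Cref{defn-admissible}, then (1) and (2) already give that $H$ is bounded above and bounded below, so it remains only to verify the extra condition of \cite{CarSal1987}, namely that the canonical map $S\to SQ/H$, $s\mapsto\sum_{v}se_{v}+H$, is injective. If $s=r+\mathfrak{h}$ maps to $0$, then $\sum_{v}re_{v}\in\pi^{-1}(H)=I$, so $r\in\mathfrak{h}$ by the very definition of $\mathfrak{h}$, i.e.\ $s=0$. Hence $H$ is admissible in the sense of \cite{CarSal1987}. Beyond the preimage identity $\pi^{-1}(A^{S})=A+\mathfrak{h}Q$, the argument is just bookkeeping around one use of the isomorphism theorem.
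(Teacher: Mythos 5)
Your proposal is correct, and while the overall strategy is the paper's — push the defining inclusions of \Cref{defn-admissible} through the coefficient-reduction map $RQ\to SQ$ (your $\pi$, the paper's $\mathsf{t}$), which carries each $A_{\ell}$, $A^{m}$ and $\maxideal Q$ onto the corresponding submodules of $SQ$ — the mechanism you use for part (3) and for the injectivity of $S\to SQ/H$ is genuinely different. The paper argues coefficient-by-coefficient in the free $S$-module $SQ$: for (3) it lifts a path $p\in H$ to $\sum_{q}r_{q}q\in I$, notes that $r_{p}$ maps to $1$, uses locality of $R$ to invert $r_{p}$, and deduces $p\in I$; the final claim is proved by a similar comparison of trivial-path coefficients. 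You instead record once that $\ker\pi=\mathfrak{h}Q\subseteq I$ (because $I$ is an ideal and $\mathfrak{h}$ is the preimage of $I$ under $R\to RQ$), so the isomorphism theorem gives $\Lambda\cong SQ/H$ and, crucially, $\pi^{-1}(H)=I$; both (3) and the injectivity of $S\to SQ/H$ then drop out in one line, with no appeal to units of $R$. Your route also makes explicit two points the paper leaves implicit: the isomorphism $\Lambda\cong SQ/H$ itself, and the second half of bounded-above, where your identity $\pi^{-1}(A^{S})=A+\mathfrak{h}Q$ together with $\mathfrak{h}Q\subseteq I$ is exactly what lets one pull an element of $H\cap A^{S}$ back to $I\cap A$ (the paper's ``This gives (1) and (2)'' glosses over this step). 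The only cosmetic quibble is the word ``vacuous'' for the case $\mathfrak{h}=R$: the statements do hold there, but trivially rather than vacuously (everything becomes the zero ring, and $S$ is then not local), so a sentence noting the triviality would be more accurate.
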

\begin{proof}
Let $\mathfrak{n}=\maxideal+\mathfrak{h}/\mathfrak{h}$, the image of $\maxideal$ under the quotient map $R\to S$. 
By definition $S$ is local with maximal ideal  $\mathfrak{n}$. 
Let $\mathsf{t}\colon RQ\to SQ$ be the canonical ring homomorphism so that $H$ is the image of $I$ under $\mathsf{t}$. 
Since $R\to S$ is surjective, so is $\mathsf{t}$, and the image of $A_{l}$ under $\mathsf{t}$ is the $S$-submodule of $SQ$ generated by the length-$l$ paths. 
 Likewise $\mathfrak{n}Q$ is the image  of $\maxideal Q$ under $\mathsf{t}$. 
 This gives (1) and (2). 
 
For (3), clearly paths in $I$ are sent to $H$ under $\mathsf{t}$. 
Conversley assume $p\in H$ is a path, so $p=\mathsf{t}(\sum_{q}r_{q}q)$ where $\sum_{q}r_{q}q\in I$ as $q$ runs through the paths in $Q$. 
Since $SQ$ is a free $S$-module, $\mathsf{t}(r_{p})=1$ and $r_{q}\in \mathfrak{h}$ for $q\neq p$. 
Since $\mathsf{t}(r_{p})=1$ we have $1-r_{p}\in\maxideal
$ and so $r_{p}$ has an inverse $s\in R$. 
Altogether writing $r_{p}p$ as the difference of $r_{p}p+\sum_{q\neq p}r_{q}q$ and $\sum_{q\neq p}r_{q}q$ gives $p=sr_{p}p\in I$. 
Thus the paths lying in $H$ are those of the form $\mathsf{t} (p)$ with $p$ a path in $I$. 
So (3) holds. 

For the final claim let $ s\in S$, $e=\sum e_{v}\in RQ$, $f=\sum e_{v}\in SQ$  and suppose $sf\in H$. 
By definition this means there is some $x\in I$, say of the form $x=\sum t_{p}p$ with $t_{p}\in R$ of finite support over the paths $p$, where $sf=\sum s_{p}p$ where $s_{p}=t_{p}+\mathfrak{h}$ for each $p$. 
Since $SQ$ is a free $S$-module this means  $s=s_{p}$ when $p$ is trivial and $t_{q}\in \mathfrak{h}$ for any non-trivial path $q$. 
So $x-y\in I$ where $y=\sum_{q}t_{q}q$ and this means  $t_{p}e\in I$ for $p$ trivial. 
At least one trivial path $p$ exists, and by the above $t_{p}\in \mathfrak{h}$ and so $s_{p}=0$. 
We have shown that $sf\in H$ implies $s=0$, and therefore the algebra map $S\to SQ/H$ is injective. 
\end{proof}

\begin{lem}\emph{(c.f \cite[p. 522, Proposition]{PenRag1988}). }
\label{lem-admissible-means-R-mod-finite}
 If  $I$ is bounded below  then $\Lambda$ is module-finite over $R$.
\end{lem}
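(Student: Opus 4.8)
The plan is to produce a finitely generated $R$-submodule $B\subseteq RQ$ with $RQ=B+I$; once this is done the canonical projection $RQ\to\Lambda$ restricts to a surjection of $R$-modules $B\to\Lambda$, so $\Lambda$ is a quotient $R$-module of $B$ and therefore module-finite over $R$.

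To build such a $B$, I would use that $I$ is bounded below to fix integers $m,n>0$ with $A^{m}\subseteq I+\maxideal Q$ and $\maxideal Q\subseteq I+\sum_{\ell=1}^{n}A_{\ell}$, set $N=\max\{m-1,n\}$, and take $B=\bigoplus_{\ell=0}^{N}A_{\ell}$, the $R$-span of the paths in $Q$ of length at most $N$. Since $Q$ is finite there are only finitely many such paths, so $B$ is a finitely generated (indeed free) $R$-module. The one structural observation needed is that $A^{m}=\bigoplus_{\ell\geq m}A_{\ell}$: because $A$ is the $R$-span of the non-trivial paths, a product of $m$ non-trivial paths is either $0$ or a path of length $\geq m$, and conversely every path of length $\geq m$ factors as such a product.

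With this in hand the argument is a short chase through the definition of being \emph{bounded below}. From $RQ=\bigoplus_{\ell\geq 0}A_{\ell}$ together with the previous observation one gets $RQ=\bigl(\bigoplus_{\ell=0}^{m-1}A_{\ell}\bigr)+A^{m}$, and then
\[
RQ\;\subseteq\;\Bigl(\bigoplus_{\ell=0}^{m-1}A_{\ell}\Bigr)+I+\maxideal Q\;\subseteq\;\Bigl(\bigoplus_{\ell=0}^{m-1}A_{\ell}\Bigr)+I+\sum_{\ell=1}^{n}A_{\ell}\;\subseteq\;B+I,
\]
using $m-1\leq N$ and $n\leq N$ at the last inclusion. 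Hence $RQ=B+I$, which is exactly what was wanted. I do not expect a genuine obstacle here: the only points requiring any care are the bookkeeping identity $A^{m}=\bigoplus_{\ell\geq m}A_{\ell}$ and keeping track of which homogeneous components of $RQ$ are absorbed into $B$, after which module-finiteness drops out — and one does not even need noetherianity of $R$ for this step, only that a quotient of a finitely generated module is finitely generated.
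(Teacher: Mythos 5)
Your argument is correct and follows essentially the same route as the paper: both use $A^{m}=\bigoplus_{\ell\geq m}A_{\ell}$ together with the chain $A^{m}\subseteq I+\maxideal Q\subseteq I+\sum_{\ell=1}^{n}A_{\ell}$ to conclude that $\Lambda$ is generated as an $R$-module by the (finitely many) paths of bounded length. Your packaging via the finitely generated submodule $B$ with $RQ=B+I$, and the remark that noetherianity of $R$ is not needed, are fine but not substantively different.
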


\begin{proof}
Let $\Lambda=RQ/I$. 
By assumption for some $m,n>0$ we have that $\sum_{l\geq m}A_{l}=A^{m}$ is contained in $\maxideal Q+I $, and this is contained in $I+\sum_{l=1}^{n} A_{l}$. 
Consequently $\Lambda=RQ/I$ is generated as an $R$-module by the paths of length at most $\max\{m,n\}$. 
To see this, note that $RQ=\sum_{\ell\geq 0}A_{\ell}$ and that if $p$ is a path of length $\ell\geq m$ then $p\in I+\sum_{l=1}^{n} A_{l}$.
\end{proof}


\begin{lem}
\label{lem-arrow-direct-radical-implies-semiperfect-basic}
Let  $I$ be arrow-radical and $\Lambda=RQ/I$. 
The following statements hold for a vertex $v$.
\begin{enumerate}
    \item The idempotent  $e_{v}+I\in \Lambda$ is local.
    \item For any other vertex $u$ the idempotents $e_{u}+I\in \Lambda$  and $e_{v}+I\in \Lambda$ are not isomorphic.
\end{enumerate}
Consequently $\Lambda$ is semiperfect and basic. 
\end{lem}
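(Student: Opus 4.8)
The plan is to show that $\boldsymbol{e}=\{e_{v}+I:v\in Q_{0}\}$ is a complete and orthogonal set of pairwise non-isomorphic local idempotents of $\Lambda$, and then to quote M\"{u}ller's theorem for semiperfectness and the characterisation recalled in \S\ref{basic rings} for basicness. That $\boldsymbol{e}$ is complete and orthogonal is immediate from $\sum_{v}e_{v}=1$ and $e_{u}e_{v}=0$ ($u\neq v$) in $RQ$, and each $e_{v}+I$ is nonzero because arrow-radicality requires $\Lambda e_{v}$ to possess a unique maximal, in particular proper, submodule, so $\Lambda e_{v}=\Lambda(e_{v}+I)\neq 0$. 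Throughout I would write $N_{v}=\sum_{t(a)=v}\Lambda a$ for the unique maximal left submodule of $\Lambda e_{v}$ furnished by condition (i) of \Cref{defn-arrow-radical}.

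For part (1): since $N_{v}$ is the unique maximal submodule of $\Lambda e_{v}$ we have $\rad(\Lambda e_{v})=N_{v}$. As $\boldsymbol{e}$ is complete and orthogonal, ${}_{\Lambda}\Lambda=\bigoplus_{v}\Lambda e_{v}$, and since the radical of a module is additive over direct sums and $\rad({}_{\Lambda}\Lambda)=\jacrad$, we obtain $\jacrad=\bigoplus_{v}N_{v}$. Right multiplying this decomposition by $e_{v}$, using $N_{v}e_{v}=N_{v}$ (each generator $\bar a$ of $N_{v}$ satisfies $\bar a=\bar a e_{v}$ since $t(a)=v$) and $N_{u}e_{v}\subseteq\Lambda e_{u}e_{v}=0$ for $u\neq v$, yields $\jacrad e_{v}=N_{v}$. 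Hence $\Lambda e_{v}/\jacrad e_{v}=\Lambda e_{v}/N_{v}$ is simple, which is precisely the condition that $e_{v}+I$ be local; write $S_{v}$ for this simple module.

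For part (2): I would distinguish the $S_{v}$ by the action of the idempotents. One has $e_{v}\notin N_{v}$ (otherwise $\Lambda e_{v}\subseteq N_{v}$), so $e_{v}+N_{v}\neq 0$ in $S_{v}$, and $e_{v}\cdot(e_{v}+N_{v})=e_{v}+N_{v}$ gives $e_{v}S_{v}\neq 0$. On the other hand, for $u\neq v$ every path in $Q$ from $v$ to $u$ is non-trivial and hence a left multiple of its right arrow, which has tail $v$; thus $e_{u}\Lambda e_{v}\subseteq N_{v}$, and consequently $e_{u}S_{v}=0$. So the $S_{v}$ are pairwise non-isomorphic, and since an isomorphism $e_{u}+I\cong e_{v}+I$ would give $\Lambda e_{u}\cong\Lambda e_{v}$ and hence $S_{u}\cong S_{v}$, part (2) follows.

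The two stated conclusions are then immediate: the complete orthogonal set $\boldsymbol{e}$ of local idempotents makes $\Lambda$ semiperfect by \cite[Theorem 1]{Mueller-on-semiperfect-rings}, and its elements being pairwise non-isomorphic makes $\Lambda$ basic by the discussion in \S\ref{basic rings}. The one step to handle carefully is the identification $N_{v}=\jacrad e_{v}$ in part (1): in general a module with a unique maximal submodule need not have that submodule equal to $\jacrad$ applied to it, and what rescues us is precisely that \emph{every} $\Lambda e_{u}$ is local, so that $\rad({}_{\Lambda}\Lambda)$ splits as $\bigoplus_{v}N_{v}$.
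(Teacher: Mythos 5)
Your proof is correct, and its overall skeleton matches the paper's: exhibit $\{e_{v}+I\}$ as a complete orthogonal set of pairwise non-isomorphic local idempotents and then invoke M\"{u}ller's criterion together with the characterisation of basic rings from \S\ref{basic rings}. For part (1) you and the paper do essentially the same thing; the paper simply appeals to the definition of a local idempotent (unique maximal submodule $=\jacrad e_{v}$), while you make explicit why the unique maximal submodule $N_{v}$ from \Cref{defn-arrow-radical} equals $\jacrad e_{v}$, via $\jacrad=\bigoplus_{v}\rad(\Lambda e_{v})$ and right multiplication by $e_{v}$ --- a worthwhile clarification, although your closing remark slightly misattributes the rescue: what makes $N_{v}=\jacrad e_{v}$ work is that $\Lambda e_{v}$ is a direct summand of ${}_{\Lambda}\Lambda$ (so the radical decomposes), not the locality of the \emph{other} summands. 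For part (2) you take a genuinely different route. The paper argues by contradiction from the element-wise characterisation of isomorphic idempotents: if $e_{u}-ab,\,e_{v}-ba\in I$ with $a\in e_{u}RQe_{v}$, $b\in e_{v}RQe_{u}$, then $a,b$ are combinations of non-trivial paths, so $e_{u}+I$ lies in $\rad(\Lambda e_{u})$, forcing $\rad(\Lambda e_{u})=\Lambda e_{u}$, contradicting arrow-radicality. You instead distinguish the simple tops $S_{v}$ by the idempotent action, using $e_{u}\Lambda e_{v}\subseteq N_{v}$ for $u\neq v$ to get $e_{u}S_{v}=0\neq e_{v}S_{v}$. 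Both arguments rest on the same combinatorial fact (non-trivial paths factor through an arrow and hence land in the radical); the paper's is a quick contradiction inside the idempotent calculus, whereas yours additionally records that the simples $S_{v}$ are pairwise non-isomorphic, which is information the paper recovers later anyway. Either way the two stated consequences follow exactly as you say.
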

\begin{proof}
(1) 
Each $e_{v}+I\in \Lambda$ is a local idempotent since $I$ is arrow-radical; see \S\ref{subsection-semiperfect-background}. 

(2) 
Note $\rad(\Lambda e_{v})=\jacrad e_{v}$ for each $v$; see for example \cite[Theorem 24.7]{Lam1991}. 
For a contradiction suppose $e_{u}-ab,e_{v}-ba\in I$ for some $a\in e_{u}RQe_{v}$ and $b\in e_{v}RQe_{u}$. 
Since $u\neq v$ the element $a\in RQ$ must be an $R$-linear combination of paths of length at least $1$. 
Hence, and likewise, we  have $a,b\in A$. 
This means $e_{u}\in I+A$ and so $e_{u}+I\in \rad(\Lambda e_{u})$. 
This gives $\rad(\Lambda e_{u})=\Lambda e_{u}$ contradicting that $I$ is arrow-radical. 

Consequently, the set $\{e_{v}+I\mid v\in Q_{0}\}$ is a complete and orthogonal set of pair-wise non-isomorphic local idempotents in the quotient  $\Lambda=RQ/I$. 
\end{proof}


\begin{lem}
    \label{lem-arrow-direct-radical-implies-semiperfect-basic-part-2} 
    Let  $I$ be arrow-radical, $e=e_{v}$ and $f=e_{w}$ for some vertices $v$ and $w$. Let $\ell$ be the number of arrows in $Q$ from $v$ to $w$. The following statements hold. 
    \begin{enumerate}
        \item If  $\maxideal Q\subseteq A+I$ then  $e\Lambda e/e \jacrad e\cong  k$  as rings. 
        \item If $\maxideal Q\subseteq A+I$  and $I\cap A\subseteq A^{2}+\maxideal Q\cap A$ then $f\jacrad e/f\jacrad^{2} e \cong  k^{\ell}$  as $k$-vector spaces. 
    \end{enumerate}
    Consequently if $I$ is bounded above then $\Lambda/\jacrad \cong  k^{\vert Q_{0}\vert}$ as rings and  $\jacrad/\jacrad^{2} \cong  k^{\vert Q_{1}\vert}$ as vector spaces. 
\end{lem}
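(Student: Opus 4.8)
The plan is to make the radical of $\Lambda=RQ/I$ and its square completely explicit, then read off the two Peirce pieces that occur, and finally globalise. By \Cref{lem-arrow-direct-radical-implies-semiperfect-basic} the cosets $e_{u}+I$ ($u\in Q_{0}$) form a complete orthogonal set of pairwise non-isomorphic local idempotents, so $\Lambda$ is semiperfect and basic; in particular each $\Lambda e_{u}$ is non-zero (a zero module has no maximal submodule), so $e_{u}\notin I$. As $e_{u}+I$ is local, $\jacrad e_{u}$ is the unique maximal submodule of $\Lambda e_{u}$, which by the arrow-radical hypothesis equals $\sum_{t(a)=u}\Lambda a$; summing over $u$ and using that every non-trivial path lies in $RQa$ for its right arrow $a$ (so $\sum_{a\in Q_{1}}RQa=A$) gives $\jacrad=(A+I)/I$, hence $\jacrad^{2}=(A^{2}+I)/I$. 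Together with the Peirce decomposition $\Lambda=\bigoplus_{u,u'}e_{u}\Lambda e_{u'}$ this yields $e_{w}\jacrad^{t}e_{v}\cong(e_{w}A^{t}e_{v}+e_{w}Ie_{v})/e_{w}Ie_{v}$ for $t=1,2$, where $e_{w}Ie_{v}=I\cap e_{w}RQe_{v}$ and $e_{w}A^{t}e_{v}$ is the $R$-span of the paths $v\to w$ of length $\ge t$.

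For (1) I would set $e=e_{v}$, write $eRQe=Re\oplus C$ with $C=eAe$ the $R$-span of the non-trivial cycles at $v$, and deduce from the above that $e\Lambda e/e\jacrad e\cong eRQe/(C+eIe)$. The $R$-algebra map $R\to eRQe/(C+eIe)$, $r\mapsto\overline{re}$, is surjective, and its kernel is exactly $\maxideal$: if $r\in\maxideal$ then $re\in\maxideal Q\subseteq A+I$ by hypothesis, so $re=e(re)e\in eAe+eIe=C+eIe$; and a unit $r\notin\maxideal$ would give $e\in C+eIe\subseteq A+I$, i.e.\ the idempotent $e+I$ would lie in $\jacrad$ and so vanish, contradicting $e\notin I$. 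Hence $e\Lambda e/e\jacrad e\cong k$ as rings.

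For (2), let $a^{(1)},\dots,a^{(\ell)}$ be the arrows $v\to w$, so that $e_{w}Ae_{v}=B_{1}\oplus B_{\ge 2}$ with $B_{1}=\bigoplus_{i}Ra^{(i)}$ free of rank $\ell$ and $B_{\ge 2}=e_{w}A^{2}e_{v}$. By the first paragraph $f\jacrad e/f\jacrad^{2}e$ is $R$-spanned by the classes of the $a^{(i)}$, and $\maxideal$ annihilates it since $r\cdot(f\jacrad e)=f\,\mathsf{a}(r)\,\jacrad e\subseteq f\jacrad\cdot\jacrad e=f\jacrad^{2}e$, using $\mathsf{a}(\maxideal)\subseteq\Lambda\maxideal\subseteq\jacrad$ (from $\maxideal Q\subseteq A+I$ again). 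So it is a $k$-space spanned by at most $\ell$ vectors, and the crux is their $k$-linear independence. If $\sum_{i}r_{i}a^{(i)}\in e_{w}A^{2}e_{v}+e_{w}Ie_{v}\subseteq A^{2}+I$, I would write $\sum_{i}r_{i}a^{(i)}=b+z$ with $b\in A^{2}$ and $z\in I\cap A$, invoke the bounded-above hypothesis $I\cap A\subseteq A^{2}+\maxideal Q\cap A$ to split $z=z'+z''$ with $z'\in A^{2}$ and $z''\in\maxideal Q$, and apply the projection $\pi\colon RQ\to A_{1}$ associated with the length grading $RQ=\bigoplus_{\ell'\ge 0}A_{\ell'}$: it kills $b$ and $z'$ and carries $z''$ into $\maxideal A_{1}$, so $\sum_{i}r_{i}a^{(i)}\in\maxideal A_{1}$, forcing every $r_{i}\in\maxideal$ since the arrows are part of an $R$-basis of $RQ$. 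Thus $f\jacrad e/f\jacrad^{2}e\cong k^{\ell}$. I expect this graded-projection step — isolating the degree-one component of a relation — to be the one genuinely delicate point; the rest is bookkeeping.

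For the closing assertions I would first note that $\Lambda/\jacrad\cong RQ/(A+I)\cong\prod_{v}R/\mathfrak{h}_{v}$ for ideals $\mathfrak{h}_{v}\triangleleft R$ (an ideal of the finite product of rings $RQ/A\cong\prod_{v}Re_{v}$ being a product of ideals). Since $\Lambda$ is semiperfect and basic, $\Lambda/\jacrad$ is a product of division rings, so each $R/\mathfrak{h}_{v}$ is a field; as $R$ is local and $\mathfrak{h}_{v}\ne R$ (because $e_{v}\notin I$) this forces $\mathfrak{h}_{v}=\maxideal$. Hence $\Lambda/\jacrad\cong k^{|Q_{0}|}$ as rings, and this also yields $\maxideal Q+A=I+A$, i.e.\ $\maxideal Q\subseteq A+I$, which is exactly the hypothesis needed to apply (1) and (2) under the assumption that $I$ is bounded above. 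For the radical square, the Peirce decomposition gives $\jacrad/\jacrad^{2}=\bigoplus_{v,w}\bigl(e_{w}\jacrad e_{v}/e_{w}\jacrad^{2}e_{v}\bigr)$, which by (2) is $\bigoplus_{v,w}k^{\ell_{vw}}$ with $\ell_{vw}$ the number of arrows $v\to w$; since $\sum_{v,w}\ell_{vw}=|Q_{1}|$ this gives $\jacrad/\jacrad^{2}\cong k^{|Q_{1}|}$ as $k$-vector spaces.
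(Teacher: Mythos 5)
Your proof is correct and follows essentially the same route as the paper: the same surjections $R\to e\Lambda e/e\jacrad e$ and $R^{\ell}\to f\jacrad e/f\jacrad^{2}e$, with kernels pinned down using $\jacrad=(A+I)/I$ (from arrow-radicality) and the bounded-above condition to force the arrow coefficients into $\maxideal$ (your degree-one projection is the same device as the paper's reduction $RQ\to kQ$), followed by Peirce decompositions for the global statement. Your closing paragraph is in fact slightly more careful than the paper's terse "consequently" step, since you explain why $\maxideal Q\subseteq A+I$ is available when only boundedness above is assumed (it follows from arrow-radicality, via each $\mathfrak{h}_{v}=\maxideal$), a point the paper leaves implicit.
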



\begin{proof}
(1)
Define $\mathsf{h}\colon R\to e\Lambda e/e\jacrad e$ by $\mathsf{h}=\mathsf{j}\circ\mathsf{i}$ where $\mathsf{i}\colon R\to e\Lambda e$ and $\mathsf{j}\colon e\Lambda e\to e\Lambda e/e\jacrad e$ are the ring homomorphisms  defined by 
$
 \mathsf{i}(r)= ere+I, 
$ and $
 \mathsf{j}(\lambda 
)= \lambda +e\jacrad e$. 
Any $\lambda \in e\Lambda e$ is a coset $\lambda=\sum r_{p}p +I$ where $r_{p}\in R$ has finite support over the paths $p$ with $h(p)=v=t(p)$. 
In case $p\neq e$ we have  $p+I\in \jacrad $ since $p$ is non-trivial and $I$ is arrow-radical. 
Hence elements of $e\Lambda e/e\jacrad  e$ have the form $\lambda+e\jacrad  e$ where $\lambda =ere+I$ for some $r\in R$. 
Hence the map $\mathsf{h}$ is surjective. 

Since $R$ is a local ring we have $\ker(\mathsf{h})\subseteq \maxideal$. 
Since  $\maxideal Q\subseteq I+A$ and $a+I\in \jacrad$ for any $a\in A$ we have $\ker(\mathsf{h})\supseteq \maxideal$. 
Thus $e\Lambda e/e\jacrad e\cong k$.

(2) 
Without loss of generality we can assume there is at least $1$ arrow from $v$ to $w$. 
Let $a_{1},\dots,a_{\ell}$ be the distinct arrows from $v$ to $w$.  
Define $\mathsf{q}\colon R^{\ell}\to f\jacrad e/f\jacrad^{2}e$ by $\mathsf{q}=\mathsf{p}\circ\mathsf{n}$ where 
\[
\begin{array}{ccccc}
 \mathsf{n}\colon R^{\ell}\to f\jacrad e,
 &
 \mathsf{n}(r_{1},\dots,r_{\ell})=r_{1}a_{1}+\dots+r_{\ell}a_{\ell}+I, 
 & 
 &
 \mathsf{p}\colon f\jacrad e\to  f\jacrad e/f\jacrad^{2}e,
 &
 \mathsf{p}(\lambda 
)= \lambda +f\jacrad^{2} e.
\end{array}
\]
As $R$-submodules of $RQ$ we have $fAe=\bigoplus_{i=1}^{\ell}Ra_{i}\oplus fA^{2} e$. 
Since $I$ is arrow-radical we have  $Ae+I=\jacrad e$ and so $fAe+I=f\jacrad e$ and $fA^{2}e+I=f\jacrad ^{2} e$. 
From here it is straightforward to check $\mathsf{q}$ is surjective. 

Now suppose $(r_{1},\dots,r_{\ell})\in \ker(\mathsf{q})$. 
Consider the sum $b=\sum r_{i}a_{i}$ over $i$. 
By construction $b+I\in f\jacrad ^{2} e$ and so there exists $c\in I$ such that the difference $d=c-b$ lies in $fA^{2}e$. 
In particular $c\in f(I\cap A) e$. 

By assumption this means  $c\in f(A^{2}+\maxideal Q\cap A)e$. 
Let $c'\in \maxideal Q\cap A$ and  $c''\in A^{2}$  such that $c=f(c'+c'')e$ and so the difference $d'=b-fc'e$ lies in $fA^{2}e$. 
In particular $d'$ and $b$ have the same image under the canonical map $RQ\to kQ$ induced by the residue map $R\to k$. 
Hence $r_{i}\in \maxideal$ for each $i$. 
This argument shows that $\ker(\mathsf{q})\subseteq \maxideal^{\ell}$ and the inclusion $\maxideal Q\subseteq A+I$ gives that $\maxideal^{\ell}\subseteq \ker(\mathsf{q})$. 

By (1) and (2), considering Peirce decompositions gives $\Lambda/\jacrad \cong  k^{\vert Q_{0}\vert}$ and  $\jacrad/\jacrad^{2} \cong  k^{\vert Q_{1}\vert}$. 
See \S\ref{basic rings}.  
\end{proof}

\begin{lem}
\label{lem-admissibility-theorem-part-1}
If  $I$ is arrow-radical and $\Lambda$ is finitely generated as an $R$-module then $I$ is bounded below. 
\end{lem}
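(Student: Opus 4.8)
The plan is to show directly that both defining conditions of "bounded below" hold, using the hypothesis that $\Lambda=RQ/I$ is finitely generated as an $R$-module together with the arrow-radical hypothesis. First I would exploit that $\Lambda$ is module-finite over the noetherian local ring $R$, so by the discussion in \S\ref{subsec-rad-topol-and-krull-intersect} (in particular \cite[Proposition 20.6]{Lam1991}) there is an integer $n>0$ with $\jacrad^{n}\subseteq \Lambda\maxideal$, and also $\Lambda\maxideal\subseteq\jacrad$. The key translation is that, since $I$ is arrow-radical, the image of the ideal $A$ of $RQ$ in $\Lambda$ is exactly $\jacrad$: indeed $\rad(\Lambda e_{v})=\sum_{t(a)=v}\Lambda a$ for each $v$ by \Cref{lem-arrow-direct-radical-implies-semiperfect-basic} and its proof, so $\jacrad=(A+I)/I$, and hence $\jacrad^{\ell}=(A^{\ell}+I)/I$ for every $\ell\ge 1$.

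Next I would establish the second condition, $\maxideal Q\subseteq I+\sum_{\ell=1}^{n}A_{\ell}$ for some $n$. Since $\Lambda$ is a finitely generated $R$-module and $\maxideal$ is the maximal ideal of $R$, the quotient $\Lambda/\Lambda\maxideal$ is a finite-dimensional $k$-vector space, hence has finite length; combined with $\Lambda\maxideal\subseteq\jacrad$ this forces $\Lambda$ to be semiprimary-like in the relevant range, so $\jacrad^{N}\subseteq\Lambda\maxideal$ gives, via the identification above, $A^{N}+I\subseteq \Lambda\maxideal$ lifted back to $RQ$, i.e. $A^{N}\subseteq \maxideal Q + I$ after choosing lifts of a generating set of $\Lambda\maxideal$ (each lift is an $R$-combination of paths with an $\maxideal$-coefficient correction plus something in $I$). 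This yields $A^{m}\subseteq I+\maxideal Q$ with $m=N$. For the other half, since $\Lambda$ is generated as an $R$-module by finitely many elements, and each such generator is a coset of a finite $R$-combination of paths, there is a single integer $n$ so that every path of length $>n$ lies in $I+\sum_{\ell=1}^{n}A_{\ell}$ modulo lower-degree corrections; pushing $\maxideal Q$ through the same finite generation, any element of $\maxideal Q$ is an $R$-combination (with $\maxideal$-coefficients, but that only helps) of paths, and modulo $I$ it is an $R$-combination of the chosen generators, so lies in $I+\sum_{\ell=1}^{n}A_{\ell}$.

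I expect the main obstacle to be the careful bookkeeping in the second displayed inclusion $\maxideal Q\subseteq I+\sum_{\ell=1}^{n}A_{\ell}$: one must argue that finite generation of $\Lambda$ over $R$ really does bound the lengths of paths needed, i.e. that some fixed $n$ works simultaneously for all paths, not just that each path is individually reducible. The cleanest route is: let $n$ be the maximum length of a path occurring (with nonzero coefficient) in a chosen finite $R$-module generating set of $\Lambda$; then for any path $p$ of length $\ell$, its coset in $\Lambda$ is an $R$-combination of these generators, and subtracting the corresponding $RQ$-combination of length-$\le n$ paths shows $p\in I+\sum_{\ell'=1}^{n}A_{\ell'}$; scaling by elements of $\maxideal$ (or $R$) and summing handles a general element of $\maxideal Q$ (indeed of $RQ$). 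Combined with the arrow-radical identification $\jacrad^{m}=(A^{m}+I)/I$ and $\jacrad^{m}\subseteq\Lambda\maxideal$ for suitable $m$, both conditions (b) of \Cref{defn-admissible} follow, so $I$ is bounded below.
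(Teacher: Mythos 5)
Your first half is fine and is essentially the paper's argument: arrow-radicality gives $\jacrad=(A+I)/I$, hence $\jacrad^{m}=(A^{m}+I)/I$, while module-finiteness gives $\jacrad^{m}\subseteq \Lambda\maxideal=(\maxideal Q+I)/I$ for some $m>0$, so $A^{m}\subseteq I+\maxideal Q$. The gap is in the second inclusion. You reduce modulo a finite $R$-module generating set of $\Lambda$ itself, and any such generating set must involve trivial paths (its image must span $\Lambda/\jacrad\neq 0$). Rewriting the coset of an element of $\maxideal Q$ as an $R$-combination of these generators therefore only shows $\maxideal Q\subseteq I+\sum_{\ell=0}^{n}A_{\ell}$, with the length-$0$ part $A_{0}$ included; the definition of bounded below requires the sum to start at $\ell=1$, and passing from $\ell\geq 0$ to $\ell\geq 1$ is exactly the nontrivial point, since it amounts to showing that $\maxideal$-multiples of trivial paths lie in $I+\sum_{\ell=1}^{n}A_{\ell}$. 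Your parenthetical claim that the same containment holds for all of $RQ$ is in fact false: if $RQ\subseteq I+A$ then each $\Lambda e_{v}$ would coincide with its radical, contradicting that these modules are local (Nakayama). So the step where you write the lower summation limit as $\ell'=1$ is unjustified as it stands.

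The repair is to run the finite-generation argument on $\Lambda\maxideal$ (equivalently on $\jacrad$) rather than on $\Lambda$, which is what the paper does. Since $R$ is noetherian and $\Lambda$ is module-finite, $\Lambda\maxideal$ is a finitely generated $R$-module; choose generators $z_{i}=x_{i}+I$ with $x_{i}\in\maxideal Q$. Arrow-radicality gives $\Lambda\maxideal\subseteq\jacrad=(A+I)/I$, that is $\maxideal Q\subseteq A+I$, so each $x_{i}$ may be replaced modulo $I$ by some $a_{i}\in A_{1}+\dots+A_{d(i)}$; setting $n=\max_{i}d(i)$, any $w\in\maxideal Q$ satisfies $w+I\in\Lambda\maxideal=\sum_{i}Rz_{i}$, hence $w\in I+\sum_{i}Ra_{i}\subseteq I+\sum_{\ell=1}^{n}A_{\ell}$. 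This second use of the arrow-radical hypothesis, which forces the chosen representatives to lie in $A$ and so removes the $A_{0}$ term, is precisely what your reduction omits.
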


\begin{proof}
Let $\Lambda=RQ/I$. 
Since $\Lambda$ is module-finite over $R$, by \S\ref{sec-module-finite-algebras} the two-sided ideal $\Lambda\maxideal$ in $\Lambda$ generated by the image of $\maxideal$ under the composition of the canonical ring maps $R\to RQ \to \Lambda$ satisfies $\Lambda\maxideal\subseteq \jacrad $ and $\jacrad^{m}\subseteq \Lambda\maxideal$ for some $m>0$.   
Let $x\in A^{m}$, and so since $I$ is arrow-radical we have that $x+I\in \jacrad^{m}$. 
By the above this means $x+I\in \Lambda \maxideal$, and so $x-y\in I$ for some $y\in \maxideal Q$. 
Thus $x\in \maxideal Q+I$ and so $A^{m}\subseteq \maxideal Q+I$.

To complete the proof of we now show $\maxideal Q\subseteq I+\sum_{l=1}^{n} A_{l}$ for some $n> 0$. 
Firstly, since $\Lambda$ is finitely generated as an $R$-module, and since $R$ is a noetherian ring, the $R$-submodule $\Lambda \maxideal$ of $\Lambda$ is finitely generated. 
Now we choose $R$-module generators, meaning that we choose $x_{1},\dots,x_{g}\in\maxideal Q$ such that $\Lambda \maxideal=\sum_{i=1}^{g}Rz_{i}$ where $z_{i}=x_{i}+I$.  
We have already seen above that $\Lambda\maxideal\subseteq \jacrad $, and so $\maxideal Q\subseteq A+I$. 
Recall $A$ is the sum over 
$\ell\geq 1$ of the $R$-submodules $A_{\ell}$. 
So for each $i=1,\dots,g$ we have an integer $d(i)>0$ and an element $a_{i}\in A_{1}+\dots +A_{d(i)}$ such that  $x_{i}-a_{i}\in I$. 
Taking $n=\max \{d(1),\dots,d(g)\}$, the second inclusion follows. 
\end{proof}


\begin{lem}
\label{lem-admissibility-theorem-part-2}
If $I$ is arrow-radical then $I\subseteq A+\maxideal Q$. 
\end{lem}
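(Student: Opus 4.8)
The plan is first to restate the inclusion $I\subseteq A+\maxideal Q$ in terms of coefficients. Since the paths of $Q$ form an $R$-basis of $RQ$, every element is uniquely $\sum_{p}r_{p}p$ with $r_{p}\in R$ finitely supported over paths $p$, and one checks directly that
\[
A+\maxideal Q=\Bigl\{\,\textstyle\sum_{p}r_{p}p\ \Big|\ r_{e_{v}}\in\maxideal\text{ for every }v\in Q_{0}\Bigr\};
\]
indeed an element of $A+\maxideal Q$ has all of its trivial-path coefficients in $\maxideal$, and conversely such an element is the sum of its non-trivial part, which lies in $A$, and $\sum_{v}r_{e_{v}}e_{v}$, which lies in $\maxideal Q$. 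Thus it suffices to prove that for every $x=\sum_{p}r_{p}p\in I$ and every vertex $v$ one has $r_{e_{v}}\in\maxideal$.

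I would then argue by contradiction. Suppose $r_{e_{v}}\notin\maxideal$; as $R$ is local, $r_{e_{v}}$ is a unit of $R$, and its image $r_{e_{v}}\cdot 1_{\Lambda}$ is a unit of $\Lambda$. Right multiplication of $x$ by $e_{v}$ annihilates every path with tail different from $v$, so in $RQ$ we have $xe_{v}=r_{e_{v}}e_{v}+\sum r_{p}p$, the sum running over \emph{non-trivial} paths $p$ with $t(p)=v$. For each such $p$, factoring $p=qa$ with $a$ its right arrow (so $t(a)=v$) shows $p+I\in\Lambda a$, which lies in $\sum_{t(b)=v}\Lambda b=\jacrad e_{v}$ by condition (i) of arrow-radicality. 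Since $xe_{v}\in I$, reducing modulo $I$ gives $r_{e_{v}}(e_{v}+I)\in\jacrad e_{v}$, and multiplying by $r_{e_{v}}^{-1}\cdot 1_{\Lambda}$ (using that $\jacrad e_{v}$ is a left $\Lambda$-submodule) yields $e_{v}+I\in\jacrad e_{v}$. Hence $\Lambda e_{v}=\Lambda(e_{v}+I)\subseteq\jacrad e_{v}$, contradicting the fact that $\sum_{t(a)=v}\Lambda a=\jacrad e_{v}$ is a \emph{proper} (maximal) submodule of $\Lambda e_{v}$, which is part of the hypothesis that $I$ is arrow-radical.

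There is no genuinely hard step here; the only points needing a little care are that the trivial-path coefficients are well defined (the paths form an $R$-basis of $RQ$), that a unit of $R$ maps to a unit of $\Lambda$, and that the notion of a local module forces $\jacrad e_{v}$ to be a proper submodule, so that $\Lambda e_{v}=\jacrad e_{v}$ really is a contradiction. The one essential input is condition (i) of arrow-radicality, which is exactly what turns each non-trivial path out of $v$ into a radical element of the projective module $\Lambda e_{v}$.
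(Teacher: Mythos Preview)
Your proof is correct and follows essentially the same approach as the paper's: both argue by contradiction, assume some trivial-path coefficient is a unit, and use arrow-radicality to push $e_{v}+I$ into $\jacrad e_{v}$, contradicting that $\Lambda e_{v}$ is local. The only cosmetic difference is that the paper multiplies $x$ on both sides by $e_{v}$ (obtaining cycles at $v$) whereas you multiply only on the right; your one-sided version is slightly cleaner and avoids the explicit appeal to Nakayama's lemma.
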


\begin{proof}
Let $x\in RQ$, say of the form of a sum $x=\sum_{p}r_{p}p$ over the paths $p$ where $r_{p}\in R$ and $r_{p}=0$ for all but finitely many $p$. 
For a contradiction suppose $x\in I$ and  $x\notin A+\maxideal Q $. 
Hence we have $r_{e}\notin \maxideal$ where $e=e_{v}$ for some $v$. 
Thus $r_{e}$ is a unit in $R$, say with inverse $s$. 
Since $x\in I$ we have $esxe\in I$. 
By construction $esxe=e+ \sum_{c} sr_{c}c$ 
where $c$ runs through the (non-trivial) cycles\footnote{\emph{Cycle} here means a non-trivial path $c$ where $h(c)=t(c)$, the vertex where $c$ is \emph{incident}.}  $c$  incident at $v$. 
Now we have shown that  $Re+I \subseteq  eAe+I$, and this means $\Lambda e\subseteq \jacrad e$ since we are assuming $I$ is arrow-radical. 
This means $\Lambda e=0$ by Nakayamas lemma, and this is impossible since $\Lambda e$ is a local module.  
\end{proof}


\begin{lem}
\label{lem-admissibility-theorem-part-3}
If $I$ is arrow-radical, arrow-distinct and permissible then $I\cap A\subseteq A^{2}+ \maxideal Q\cap  A$. 
\end{lem}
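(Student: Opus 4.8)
The plan is to fix $x\in I\cap A$ and write $x=x_{1}+x'$, where $x_{1}=\sum_{a}r_{a}a$ (the sum over the arrows $a$ of $Q$, with $r_{a}\in R$) is the degree-one part and $x'$ is an $R$-linear combination of paths of length $\geq 2$, so that $x'\in A^{2}$. Since $x_{1}\in\maxideal Q\cap A$ as soon as every $r_{a}\in\maxideal$, it suffices to prove $r_{a}\in\maxideal$ for each arrow $a$. Throughout I would work inside $\Lambda=RQ/I$, writing $p$ also for the image of a path $p$, and use that $\Lambda$ is semiperfect and basic by \Cref{lem-arrow-direct-radical-implies-semiperfect-basic}, hence semilocal, so $\rad(M)=\jacrad M$ for every $\Lambda$-module $M$; that each $\Lambda e_{v}$ is local with $\rad(\Lambda e_{v})=\sum_{t(a)=v}\Lambda a$ because $I$ is arrow-radical; and that the image of every non-trivial path lies in $\jacrad$.

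First I would fix an arrow $a_{0}$ and set $v=t(a_{0})$, $w=h(a_{0})$. From $x\in I$ one gets $e_{w}xe_{v}=0$ in $\Lambda$. The contribution of $x_{1}$ to this is $\sum_{b}r_{b}b$, where $b$ ranges over the finitely many arrows from $v$ to $w$ (one of which is $a_{0}$), while the contribution of $x'$ is $e_{w}x'e_{v}$, an $R$-linear combination of images of paths of length $\geq 2$ from $v$ to $w$. For such a path $p$ I would factor off its right arrow, $p=p'c$ with $t(c)=v$ and $p'$ non-trivial, so that the image of $p$ equals $p'\cdot c$ with $p'\in\jacrad$: if $c=a_{0}$ this lies in $\jacrad\cdot\Lambda a_{0}=\rad(\Lambda a_{0})$, and if $c\neq a_{0}$ it lies in $\Lambda c\subseteq\sum_{b\neq a_{0},\,t(b)=v}\Lambda b$. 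Separating the term $r_{a_{0}}a_{0}$ from the sum over the other arrows $v\to w$, the identity $e_{w}xe_{v}=0$ rearranges to
\[
r_{a_{0}}a_{0}\in\rad(\Lambda a_{0})+\sum_{b\neq a_{0},\,t(b)=t(a_{0})}\Lambda b\qquad\text{in }\Lambda .
\]

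Next I would argue by contradiction: if $r_{a_{0}}\notin\maxideal$ then $r_{a_{0}}$ is a unit of the local ring $R$, so $a_{0}$ itself lies in the $\Lambda$-submodule on the right; write $a_{0}=m+s$ with $m\in\rad(\Lambda a_{0})$ and $s\in\sum_{b\neq a_{0},\,t(b)=t(a_{0})}\Lambda b$. Since $s=a_{0}-m\in\Lambda a_{0}$, arrow-distinctness (condition (iii) of \Cref{defn-arrow-distinct}) gives $s\in\rad(\Lambda a_{0})$, hence $a_{0}\in\rad(\Lambda a_{0})$. But $a_{0}\notin I$ because $I$ is permissible, so $\Lambda a_{0}\neq 0$; being a homomorphic image of the local module $\Lambda e_{w}$ (via $\lambda\mapsto\lambda a_{0}$) it is itself local, so $\rad(\Lambda a_{0})$ is a proper submodule and cannot contain the generator $a_{0}$ — a contradiction. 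Hence $r_{a_{0}}\in\maxideal$; as $a_{0}$ was arbitrary, $x_{1}\in\maxideal Q\cap A$ and $x\in A^{2}+\maxideal Q\cap A$.

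The bookkeeping with Peirce components and with $\jacrad$-membership of paths is routine, and the use of permissibility and arrow-radicality is standard. The one delicate point, which I expect to be the crux, is the treatment of the higher-degree summand $x'$: its image must be split according to whether the right arrow of each constituent path equals $a_{0}$, so that the ``$=a_{0}$'' terms are absorbed into $\rad(\Lambda a_{0})$ and all remaining terms into $\sum_{b\neq a_{0}}\Lambda b$; only then does arrow-distinctness apply cleanly, invoked exactly once on $\Lambda a_{0}\cap\sum_{b\neq a_{0}}\Lambda b$.
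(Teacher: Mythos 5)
Your proposal is correct and follows essentially the same route as the paper: isolate the Peirce component $e_{h(a)}xe_{t(a)}$, split the higher-length terms according to whether their right arrow is $a$, apply arrow-distinctness (the modular-law step) to land $a$ in $\rad(\Lambda a)$, and contradict permissibility via Nakayama/locality of $\Lambda a$. The only cosmetic difference is that you argue arrow-by-arrow that each coefficient lies in $\maxideal$, whereas the paper runs a single contradiction starting from the minimal length of an inadmissible path; the substance is identical.
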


\begin{proof}
Let $d$ denote the minimum length of an inadmissible path in $Q$. 
Let $x\in RQ$ be given by $x=\sum_{p}r_{p}p$ where $r_{p}\in R$ is finitely supported over the paths $p$. 
Let $y=\sum_{q}r_{q}q$ where $q$ runs through the paths of length less than $d$. 
So $x-y\in A^{d}$. 
We now prove the inclusion $I\cap A\subseteq A^{2}+ \maxideal Q\cap  A$ holds by contradiction. 
So suppose $x\in I\cap A$ and $x\notin A^{2}+ \maxideal Q\cap  A$. 
Writing $x=x-y+y$ we have $y\notin \maxideal Q$ since $d\geq 2$ as $I$ is permissible.  
Since $x\in A$ we have $r_{p}=0$ for all trivial paths $p$. 
As $y\notin \maxideal Q$ and $d\geq 2$ we have $r_{a}\notin \maxideal$ for some arrow $a$. 
Below we deduce a contradiction, namely that $\Lambda a=0$. 

Let $u=h(a)$ and $w=t(a)$. 
Let $P$ be the set of all paths with head $u$, tail $w$ and of length $\ell\geq 1$. 
Let $P'$ be the set of $p\in P$ of length $\ell \geq 2$ and with right arrow $a$. 
Let $P''$ be the set of $p\in P$ with right arrow $b\neq a$. 
Let $s'$ be the inverse of $r_{a}$ in $R$ and let $s_{p}=s'r_{p}$ for each path $p$. 
So, 
\[
\begin{array}{c}
a+\sum_{p\in P'}s_{p}p +\sum_{q\in P''}s_{q}q 
=e_{u}(\sum_{p\in P}s_{p}p)e_{w}
=e_{u}s'xe_{w}\in I.
\end{array}
\]
If $p\in P'$ then 
$p+I\in \jacrad a=\rad(\Lambda a)$. 
Since $I$ is arrow-distinct this means 
\[
\begin{array}{c}
a+I\in \Lambda a\cap \left(\rad(\Lambda a) +\sum \Lambda b\right)=\rad(\Lambda a)+\left(\Lambda a \cap \sum\Lambda b\right)\subseteq \rad(\Lambda a)
\end{array}
\]
where the sum runs over arrows $b\neq a$ with $t(b)=w$. 
So $\Lambda a=0$ by Nakayamas lemma, a contradiction. 
\end{proof}




When $R$ is $\maxideal$-adically complete  the \emph{Gabriel quiver} $Q(\Gamma)$ of a basic module-finite $R$-algebra $\Gamma$ was recalled in \Cref{def-gabriel-quiver}. 
In case $k$ is algebraically closed this yields the same definition from  \cite{CarSal1987}.


    

\begin{proof}[Proof of \Cref{thm-admissible-presentations}.]
We are assuming $I$ is arrow radical. 

(1) 
By \Cref{lem-arrow-direct-radical-implies-semiperfect-basic} $\Lambda$ is semiperfect, basic and the elements $e_{v}+I$ are local and pair-wise non-isomorphic. 
 
(2)
By \Cref{lem-admissible-means-R-mod-finite} if $I$ is bounded below then $\Lambda$ is module-finite over $R$. 
By \Cref{lem-admissibility-theorem-part-1} the converse holds since $I$ is arrow-radical. 

(3)
By \Cref{defn-$Z$-permissible} and \Cref{defn-admissible}  if $I$ is not permissible then $I$ is not bounded above. 
Here we are assuming additionally that $I$ is arrow-distinct. 
So the converse follows from  \Cref{lem-admissibility-theorem-part-2} and \Cref{lem-admissibility-theorem-part-3}.  

(4)
By \Cref{lem-arrow-direct-radical-implies-semiperfect-basic-part-2}, given that $I$ is admissible  we have $\Lambda /\jacrad\cong k^{\vert Q_{0}\vert}$ as rings and $\jacrad/\jacrad^{2}\cong k^{\vert Q_{1}\vert}$. 

(5) 
    Recall $Q(\Lambda)$ has vertices $v_{1},\dots, v_{n}$ where $n$ is the size of set $\{e_{v}+I\mid v\in Q_{0}\}$ from (1).  
    So $Q(\Lambda)$ and $Q$ both have $n$ vertices.  
    By \Cref{lem-counting-arrows-in-quiver} the number of arrows in $Q(\Lambda)$ from $u$ to $v$ is equal to the rank of the left module  $\Ext{1}{\lMod{\Lambda}}(\Lambda e_{u}/\jacrad e_{u},\Lambda e_{v}/\jacrad e_{v})$  over the division ring $\End{\lMod{\Lambda}}(\Lambda e_{v}/\jacrad e_{v})\cong e_{v}\Lambda e_{v}/e_{v}\jacrad e_{v}$. 
     By part (4) the division ring $e_{v}\Lambda e_{v}/e_{v}\jacrad e_{v}$ is isomorphic to $k$. 
     By \Cref{lem-dimension-number-of-arrows} there are $\dim_{k}(e_{v}\jacrad e_{u}/e_{v}\jacrad^{2} e_{u})$ arrows in  $Q(\Lambda)$ from $u$ to $v$. 
     By part (4) this is the number of arrows in $Q$ from $u$ to $v$. 
\end{proof}



\begin{cor}\emph{(\cite[(5), Proposition]{CarSal1987}).} 
\label{cor-raggi-card-salm}
Let $R$ be $\maxideal$-adically complete and  $k$ be algebraically closed. 
\begin{enumerate}
    \item If  $\Gamma$ is a module-finite $R$-algebra that is also basic with Gabriel quiver $Q$ then there exists a surjective $R$-algebra homomorphism $RQ\to \Gamma$ whose kernel is admissible\footnote{In \Cref{cor-raggi-card-salm} we use the term \emph{admissible} to mean the definition used in \cite{CarSal1987}. 
Using \Cref{prop-admissible-presentations-extra}, in the proof of (2) we reduce to this case from \Cref{defn-admissible}.}. 
     \item  If $I$ is admissible then $\Lambda=RQ/I$ is $R$-module finite, semiperfect, basic and has Gabriel quiver $Q$. 
\end{enumerate}
\end{cor}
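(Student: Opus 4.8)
The plan is to deduce both parts from \Cref{thm-characterising-semiperfect-mod-fin-alg-with-brick-simples}, \Cref{thm-admissible-presentations}, \Cref{lem-admissible-means-R-mod-finite} and \Cref{prop-admissible-presentations-extra}. For part~(1): since $\Gamma$ is module-finite over the complete local ring $R$ it is semiperfect (see \Cref{setup-completeness-of-R-assumtpion}), and since $\dim_{k}(\Gamma/\rad(\Gamma))<\infty$ (see \S\ref{subsec-rad-topol-and-krull-intersect}) the semisimple ring $\Gamma/\rad(\Gamma)$ is a finite product of finite-dimensional division $k$-algebras; as $\Gamma$ is basic and $k$ is algebraically closed each factor equals $k$, so $\Gamma$ is elementary. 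By \Cref{thm-characterising-semiperfect-mod-fin-alg-with-brick-simples} we may write $\Gamma\cong RQ'/I$ for a quiver $Q'$ and a permissible, arrow-radical, arrow-distinct ideal $I\triangleleft RQ'$. By \Cref{thm-admissible-presentations}(2) the ideal $I$ is bounded below (as $\Gamma$ is module-finite over $R$), by \Cref{thm-admissible-presentations}(3) it is bounded above (as it is permissible and arrow-distinct), so $I$ is admissible in the sense of \Cref{defn-admissible}; then \Cref{thm-admissible-presentations}(5) gives that $Q'$ is the Gabriel quiver of $\Gamma$, whence $Q'\cong Q$. Finally \Cref{prop-admissible-presentations-extra} converts this into the statement of \cite{CarSal1987}: with $S=R/\mathfrak{h}$ the quotient of $R$ by the preimage of $I$ and $H$ the image of $I$ in $SQ$, the ring $S$ is again complete local with residue field $k$, $\Gamma\cong SQ/H$, and $H$ is admissible in the sense of \cite{CarSal1987}.

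For part~(2): an ideal admissible in the sense of \cite{CarSal1987} is in particular bounded above and bounded below, hence admissible in the sense of \Cref{defn-admissible}. By \Cref{lem-admissible-means-R-mod-finite} the ring $\Lambda=RQ/I$ is module-finite over $R$, and hence semiperfect since $R$ is complete. The one point needing work is that $I$ is arrow-radical, after which the rest follows from \Cref{thm-admissible-presentations}. To see this: $\Lambda$ is semilocal, so $\Lambda\maxideal\subseteq\jacrad$; boundedness below gives $A^{m}\subseteq I+\maxideal Q$ for some $m$, so the image of $A$ in $\Lambda$ is nilpotent modulo $\jacrad$, hence contained in $\jacrad$; together with $I\subseteq A+\maxideal Q$ this yields $\jacrad=(A+\maxideal Q)/I$ and $\Lambda/\jacrad\cong RQ/(A+\maxideal Q)\cong k^{\vert Q_{0}\vert}$, so $\Lambda$ is basic with pairwise non-isomorphic local idempotents $e_{v}+I$ and each $\Lambda e_{v}$ local. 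Boundedness below also gives $\maxideal Q\subseteq I+\sum_{\ell=1}^{n}A_{\ell}\subseteq I+A$, so $\maxideal e_{v}\subseteq I+A$; multiplying by $e_{v}$ on the right and on the left gives $\maxideal e_{v}\subseteq I+Ae_{v}$ and $\maxideal e_{v}\subseteq I+e_{v}A$. Since $\sum_{t(a)=v}\Lambda a$ is the image of $Ae_{v}$ while $\rad(\Lambda e_{v})=\jacrad e_{v}$ is the image of $Ae_{v}+\maxideal e_{v}$, and since $\Lambda e_{v}$ is local, the first inclusion shows $\sum_{t(a)=v}\Lambda a$ is the unique maximal submodule of $\Lambda e_{v}$; the second does the same on the right. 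Thus $I$ is arrow-radical, and \Cref{thm-admissible-presentations} now gives that $\Lambda$ is semiperfect, basic and module-finite over $R$ (parts (1)--(2)) with Gabriel quiver $Q$ (part~(5)).

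The main obstacle is the arrow-radical verification in part~(2): pinning $\jacrad$ down precisely as $(A+\maxideal Q)/I$, and extracting the one-sided containments $\maxideal e_{v}\subseteq I+Ae_{v}$ and $\maxideal e_{v}\subseteq I+e_{v}A$ from boundedness below, while carefully distinguishing the two-sided ideal $A$ from the submodules $Ae_{v}$ and $e_{v}A$. Everything else is an application of \Cref{thm-characterising-semiperfect-mod-fin-alg-with-brick-simples}, \Cref{thm-admissible-presentations}, \Cref{lem-admissible-means-R-mod-finite} and \Cref{prop-admissible-presentations-extra}.
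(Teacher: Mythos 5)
Your proposal is correct and follows the same skeleton as the paper's proof: part (1) via \Cref{thm-characterising-semiperfect-mod-fin-alg-with-brick-simples}, \Cref{thm-admissible-presentations}(2)--(3),(5) and \Cref{prop-admissible-presentations-extra}, and part (2) via \Cref{lem-admissible-means-R-mod-finite} and \Cref{thm-admissible-presentations}. The genuine difference is in part (2): the paper's proof is a bare citation of \Cref{thm-admissible-presentations}, whose standing hypothesis is that $I$ be arrow-radical, and the paper nowhere records that an admissible ideal is arrow-radical; you supply exactly this verification, and your argument is sound --- boundedness below plus $\Lambda\maxideal\subseteq\jacrad$ forces the image of $A$ into $\jacrad$, boundedness above gives $\jacrad=(A+\maxideal Q)/I$ and $\Lambda/\jacrad\cong k^{\vert Q_{0}\vert}$, and the inclusions $\maxideal e_{v}\subseteq I+Ae_{v}$ and $\maxideal e_{v}\subseteq I+e_{v}A$ (obtained by one-sided multiplication by $e_{v}$) identify $\jacrad e_{v}$ with $\sum_{t(a)=v}\Lambda a$ and $e_{v}\jacrad$ with $\sum_{h(a)=v}a\Lambda$. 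So your write-up makes explicit a step the paper leaves implicit, which is a net gain. Two cosmetic remarks: the paper's proof additionally invokes \Cref{cor-basic-biserial-quiver-is-biserial-plus-dim-1-nice} to identify its Gabriel quiver (\Cref{def-gabriel-quiver}) with the construction used in \cite{CarSal1987}, which you omit but which only matters for literally matching the cited formulation; and in part (1) your route (like the paper's) ultimately produces a surjection $SQ\to\Gamma$ with $S=R/\mathfrak{h}$ rather than $RQ\to\Gamma$ when $R$ does not act faithfully, a discrepancy inherited from the paper's own statement and proof rather than introduced by you.
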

\begin{proof}
    Note that the construction of the Gabriel quiver from \cite{CarSal1987}  is equivalent to \Cref{def-gabriel-quiver} by \Cref{cor-basic-biserial-quiver-is-biserial-plus-dim-1-nice}. 
Since $k$ is algebraically closed any finite-dimensional division algebra over $k$ is isomorphic to $k$. 
Part (1) follows by \Cref{thm-characterising-semiperfect-mod-fin-alg-with-brick-simples} and \Cref{prop-admissible-presentations-extra} and part (2) follows from \Cref{thm-admissible-presentations}. 
\end{proof}

\section{String algebras over local rings}
\label{sec-stringalgebras-over-local}

In \S\ref{sec-stringalgebras-over-local} we continue to use the conventions and notation from \Cref{setup-quiver}.

\subsection{Special ideals}


\Cref{defn-spec-quiver-and-relations} is motivated by work of  Skowro{\'n}ski and Waschb{\"u}sch \cite{SkoWas1983}.

\begin{defn}
\label{defn-spec-quiver-and-relations} 
We say that $I$ is \emph{special} if, for any arrow $b$ in $Q$, (SP1) and (SP2) below hold.
\begin{enumerate}
    \item[(SP1)] There exists at most $1$ arrow $a$ such that $ab$ is an  admissible length-$2$ path.
    \item[(SP2)] There exists at most $1$ arrow $c$ such that $bc$ is an  admissible length-$2$ path.
\end{enumerate}
\end{defn}

Note \cite[\S 1, (SP)]{SkoWas1983} includes a restriction on the number of arrows with a given head or tail.  
These restrictions arose when considering the quiver of a basic and biserial module-finite algebra over a complete local ring; see \Cref{cor-basic-biserial-quiver-is-biserial-plus-dim-1-nice}. 
We include these restrictions in \Cref{defn-string-pairs-and-algebras}. 


\begin{setup}
\label{string-setup}
In \S\ref{sec-stringalgebras-over-local} we let $I$ be an ideal in $RQ$ and we let $\Lambda=RQ/I$. 
We assume that $I$ is arrow-radical, bounded below, permissible and special. 
Fix a non-trivial admissible path $p$. 
\end{setup}

\subsubsection{Uniserials} Note that applying \Cref{thm-admissible-presentations}  to \Cref{string-setup} shows, for example, that $I$ is bounded above and below. 
\Cref{lem-initial-termina-subpaths-special} can be found in the authors PhD thesis; see \cite[Lemma 1.1.14]{Ben2018}.

\begin{lem}
\label{lem-initial-termina-subpaths-special}
Let  $p'$ be a non-trivial admissible path. 
The following statements hold. 
\begin{enumerate}
    \item If $p$ and $p'$ have the same right arrow, then one is a right subpath of the other.
    \item If $p$ and $p'$ have the same left arrow, then one is a left subpath of the other.
\end{enumerate}
\end{lem}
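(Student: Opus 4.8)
The plan is to prove (1) by a straightforward induction that ``propagates'' the right arrow, and to obtain (2) by the dual argument. Write $p=a_{n}\cdots a_{1}$ and $p'=a'_{m}\cdots a'_{1}$ with $n,m\geq 1$, and suppose $p$ and $p'$ share the same right arrow, so $a_{1}=a'_{1}$. The one elementary observation I would record first is that every subpath of an admissible path is admissible: if $z$ is a subpath of $p$, say $p=qzr$, and $z\in I$, then $p=qzr\in I$ because $I$ is a two-sided ideal, contradicting admissibility of $p$; likewise for $p'$.

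Next I would show by induction on $j$ that $a_{j}=a'_{j}$ for every $j$ with $1\leq j\leq \min(n,m)$. The base case $j=1$ is the hypothesis. For the inductive step, assume $j<\min(n,m)$ and $a_{i}=a'_{i}$ for all $i\leq j$, and set $b=a_{j}=a'_{j}$. Then $a_{j+1}a_{j}$ is a length-$2$ subpath of $p$ and $a'_{j+1}a'_{j}$ is a length-$2$ subpath of $p'$, so both are admissible by the observation above, and both have right arrow $b$. Applying (SP1) of \Cref{defn-spec-quiver-and-relations} to the arrow $b$ — there is at most one arrow $a$ with $ab$ an admissible length-$2$ path — forces $a_{j+1}=a'_{j+1}$, completing the induction. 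Taking without loss of generality $n\leq m$, we then have $p=a_{n}\cdots a_{1}=a'_{n}\cdots a'_{1}$, which is exactly the right subpath of $p'$ of length $n$; hence $p$ is a right subpath of $p'$ and (1) holds. (The edge case $n=1$ is included: then $p=a_{1}=a'_{1}$ is the length-$1$ right subpath of $p'$.)

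Statement (2) follows by the symmetric argument: if $p$ and $p'$ have the same left arrow, one induces downward from the left arrow, using that left subpaths of admissible paths are admissible and that (SP2) assigns to each arrow $b$ at most one arrow $c$ with $bc$ an admissible length-$2$ path, thereby identifying the arrows of the shorter path with a left-terminal segment of the longer one. I do not anticipate a real obstacle; the only points needing care are the reduction ``a subpath of an admissible path is admissible'' (where two-sidedness of $I$ is used) and the index bookkeeping in the length-$1$ cases. Note that only specialness of $I$ (together with $I$ being a two-sided ideal) is needed here; the remaining hypotheses of \Cref{string-setup} are not used in this lemma.
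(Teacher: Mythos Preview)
Your proof is correct and follows essentially the same approach as the paper's: both argue (1) by using (SP1) to propagate the coincidence of right arrows one step at a time (you via explicit induction on the index $j$, the paper by writing $p=qb$, $p'=q'b$ and ``repeating''), and both obtain (2) by the dual argument with (SP2). Your observation that only specialness of $I$ (and $I$ being two-sided) is used here is also accurate.
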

\begin{proof}
We only show (1) holds, since (2) is dual. 
Let $b$ be the common right arrow, so that $p=qb$ and $p'=q'b$ for paths $q,q'$ in $Q$. 
If one of $q$ or $q'$ is trivial we are done, so assume otherwise, and let $a$ and $a'$ be the right arrows of $q$ and $q'$ respectively. 
It is impossible that some $z\in I$ occurs as a subpath in $p$, and this means $ab,a'b\notin I$ and so $a=a'$ by (SP1). 
Thus when all paths involved are non-trivial, if $p$ and $p'$ have the same right arrow then $q$ and $q'$ have the same right arrow. 
Repeating gives the claim.
\end{proof}

In Lemmas \ref{lem-left-uniserial-cyclic-modules-from-admissible-paths}, \ref{lem-right-uniserial-cyclic-modules-from-admissible-paths}, \ref{lem-left-uniserial-cyclic-radicals}, \ref{lem-right-uniserial-cyclic-radicals}, \ref{lem-left-uniserial-cyclic-modules-uniqueness} and \ref{lem-right-uniserial-cyclic-modules-uniqueness}  we combine a ring-theoretic observation with a combinatorial one, namely, we combine \S\ref{sec-module-finite-algebras} with \Cref{lem-initial-termina-subpaths-special}.  
These observations were made in the authors thesis \cite[Corollary 1.1.17]{Ben2018}, but we repeat the proof here for completeness in the generality we are working in.

\begin{lem}
\label{lem-left-uniserial-cyclic-modules-from-admissible-paths} 
Any non-trivial left $\Lambda$-submodule of $\Lambda p$ is of the form $\Lambda q$ where $q$ is an admissible path and such that $p$ is a right subpath of $q$. 
\end{lem}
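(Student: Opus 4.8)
Let $N$ be a non-trivial left $\Lambda$-submodule of $\Lambda p$. First I would observe that $\Lambda p$ is a cyclic left $\Lambda$-module generated by the coset $p+I$, and $\Lambda p$ is a quotient of the projective $\Lambda e_{t(p)}$. The strategy is: (i) show $N$ is cyclic, generated by a suitable element; (ii) show that element can be taken to be (a scalar multiple of) an admissible path $q$; and (iii) show $p$ is a right subpath of that $q$. The key combinatorial input is \Cref{lem-initial-termina-subpaths-special}(1): two admissible paths sharing a right arrow are comparable under the right-subpath relation.

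\textbf{Step 1: reduce to cyclic submodules and peel off scalars.} Since $\Lambda$ is module-finite over $R$ (by \Cref{lem-admissible-means-R-mod-finite}, as $I$ is bounded below), $\Lambda$ is noetherian, so $N$ is finitely generated. Write a generator of $N$ as a coset $x+I$ with $x\in RQ$. Because $x\in (\Lambda p)$, we may take $x$ of the form $x=\sum_{q} r_q\, q$ where each $q$ ranges over paths having $p$ as a right subpath (I would justify this by noting $\Lambda p = \{\lambda(p+I)\}$, and $\lambda p$ is an $R$-combination of paths $q'p$). Among the paths $q$ appearing with $r_q\notin\maxideal$ — at least one must exist, else $x+I\in\jacrad\cdot(\text{something})$ and one argues it is not a generator, using the Krull-intersection property — pick one of minimal length; call it $q_0$. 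All such $q$ with $r_q\notin\maxideal$ share the right arrow $a$ of $p$, hence by \Cref{lem-initial-termina-subpaths-special}(1) each is a left-extension of $q_0$, i.e. $q=q'q_0$. Therefore $x = r_{q_0}q_0 + (\text{terms }q'q_0\text{ with }|q'|\ge 1) + (\text{terms with }r_q\in\maxideal)$; multiplying by the unit $r_{q_0}^{-1}$ and reorganising, $q_0+I$ differs from $x+I$ (up to unit) by an element of $\jacrad\cdot(q_0+I) + \maxideal Q$-contributions. One then wants to conclude $\Lambda(x+I)=\Lambda(q_0+I)$.

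\textbf{Step 2: the cancellation.} This is where I expect the main obstacle. I need: if $x+I$ generates $N$ and $x+I - u(q_0+I)\in\jacrad(q_0+I)$ for a unit $u$, then $N=\Lambda(q_0+I)$. The inclusion $N\subseteq\Lambda(q_0+I)$ is immediate from the shape of $x$. For the reverse, note $q_0+I = u^{-1}(x+I) + (\text{element of }\jacrad(q_0+I))$, so $\Lambda(q_0+I)\subseteq N + \jacrad\Lambda(q_0+I)$; since $\Lambda(q_0+I)$ is finitely generated and $\jacrad$ is its radical, Nakayama's lemma (applicable as $\Lambda$ is semilocal, \S\ref{subsec-rad-topol-and-krull-intersect}) gives $\Lambda(q_0+I)\subseteq N$. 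The delicate point is handling the $\maxideal Q$-contributions and showing they, too, lie in $\jacrad(q_0+I)$: since $\maxideal\subseteq\jacrad\cap R$ and each such path still has $p$ (hence $q_0$) as a right subpath, those terms lie in $\maxideal\cdot\Lambda(q_0+I)\subseteq\jacrad(q_0+I)$, so they are absorbed. I would also need to rule out the degenerate possibility that every $r_q\in\maxideal$: if so, $x+I\in\maxideal\Lambda p\subseteq\jacrad(\Lambda p)$, and iterating (using $\bigcap_n\jacrad^n M=0$, the Krull-intersection property) forces $x+I$ not to generate a non-trivial $N$ unless $N$ itself lies in every $\jacrad^n(\Lambda p)$, i.e. $N=0$, contradicting non-triviality.

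\textbf{Step 3: conclude.} Set $q=q_0$. By construction $q$ is a path with $p$ as a right subpath, and $q$ is admissible: $q+I=x+I$ up to unit modulo $\jacrad q$, and if $q\in I$ then $\Lambda q\subseteq\jacrad\cdot(\text{lower})$ would collapse $N$ to something inside $\bigcap_n\jacrad^n(\Lambda p)=0$; more directly, if $q$ were inadmissible then so would every left-extension appearing in $x$ with unit coefficient, forcing $x+I\in\maxideal Q + I$, handled by the previous paragraph. Hence $N=\Lambda q$ with $q$ admissible and $p$ a right subpath of $q$, as required. The only real work is Step 2's bookkeeping of which terms land in $\jacrad(\Lambda q)$; everything else is the standard "finitely generated uniserial-type module is cyclic, generated by a shortest path" argument packaged with \Cref{lem-initial-termina-subpaths-special}.
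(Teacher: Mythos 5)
There is a genuine gap in Step 2, and it is not just bookkeeping: the absorption claim is false as stated. You justify putting the $\maxideal$-coefficient terms into $\maxideal\cdot\Lambda(q_{0}+I)$ by saying each such path "has $p$ (hence $q_{0}$) as a right subpath". Having $p$ as a right subpath does not imply having $q_{0}$ as a right subpath: a path in the support with coefficient in $\maxideal$ may be strictly shorter than $q_{0}$ (it can be $p$ itself), and then its term lies in $\maxideal\Lambda q$ for a \emph{larger} module $\Lambda q\supseteq\Lambda q_{0}$, not in $\jacrad\Lambda q_{0}$. Under the hypotheses of \Cref{string-setup} the maximal ideal interacts with the relations, so $\pi q$ can be congruent modulo $I$ to a long path that competes with, or cancels against, your leading term. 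Concretely, take the ideal $I=\langle a^{2},b^{2},ab+ba-\pi e\rangle$ of \Cref{running-2-by-2-example-part-1} (which satisfies \Cref{string-setup}), where $\pi a+I=aba+I$; let $R$ be the $2$-adic integers with $\pi=2$, let $p=a$ and $x=\pi a+aba$. Your recipe gives $q_{0}=aba$, but $x+I=2aba+I=\pi aba+I=ababa+I$, so $\Lambda(x+I)=\Lambda(ababa+I)$, which is a proper submodule of $\Lambda(aba+I)=\Lambda(q_{0}+I)$ by \Cref{lem-left-uniserial-cyclic-modules-uniqueness}. Thus the key equality $\Lambda(x+I)=\Lambda(q_{0}+I)$ fails: your construction of $q_{0}$ depends on the chosen representative $x$ of the coset, and no normalisation is provided. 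Repairing this bottom-up argument essentially requires first proving that $\jacrad q$ equals $\Lambda cq$ for the unique admissible one-arrow extension $cq$ (arrow-radical plus (SP1)), which is the real content of the lemma, not an afterthought.

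The paper argues in the opposite, representative-free direction: it considers the set $P$ of admissible paths $q$ having $p$ as a right subpath and satisfying $M\subseteq\Lambda q$, shows $P$ is finite via the Krull-intersection property (your use of that property is the one ingredient you share with the paper), picks $q\in P$ of maximal length, and then uses that $\Lambda e_{h(q)}$ is local together with specialness to show that $M\neq\Lambda q$ would force $M\subseteq\Lambda cq$, contradicting maximality. A secondary point: you "write a generator of $N$", but noetherianity only gives finite generation; the reduction from finitely many generators to one needs the comparability of the cyclic submodules $\Lambda q_{i}$ via \Cref{lem-initial-termina-subpaths-special}, which your argument would have to establish after the cyclic case, not assume before it. As written, Step 2 is where the proof breaks.
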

\begin{proof}

By \Cref{lem-initial-termina-subpaths-special}(1) for any arrow $a$ there is a total order defined on the set of admissible paths whose right arrow is $a$ that is equivalent to  considering path length. 
Let $0\neq M$ be a $\Lambda$-submodule of $\Lambda p$. 
Let $P$ be the set of admissible paths $q$ such that $p$ is a right subpath of $q$ and such that $M\subseteq \Lambda q$. 
Since $p$ is non-trivial any pair of elements in $P$ share the same right arrow. 
Restrict the total order above to $P$. 

We are assuming $\Lambda$ is finitely generated as an $R$-module and so we may apply the \emph{Krull intersection property} for finitely generated $\Lambda$-modules recalled in  \S\ref{subsec-krull-intersect}. 
That is, for any finitely generated $\Lambda$-module $L$ we have that 
$\bigcap_{n>0}\jacrad^{n}L=0$. 
Observe now that $P$ must be finite, as  otherwise for any $n>0$ there exists  $q\in P$ of length $\ell>n$ and hence $M\subseteq \jacrad ^{n}e_{t(q)}$  contradicting the Krull-intersection property for $L=\Lambda e_{t(q)}$ since $M\neq 0$.  
Since $p\in P$,  $P\neq\emptyset$, and  we choose $q\in P$ of maximal length. 

By construction $M\subseteq \Lambda q$ and for a contradiction we suppose $M\neq \Lambda q$. 
Let $e$ be the trivial path at $h(q)$ and let $L$ be the set of $\lambda\in \Lambda e$ such that $\lambda q\in M$. 
Since $M\neq \Lambda q$ we must have $e+I\notin L$. 
Since $I$ is arrow-radical the left $\Lambda$-module $\Lambda e$ is local and hence $L$ is contained in $\rad(\Lambda e)=\sum_{t(a)=h(q)}\Lambda a$. 

Without loss of generality we can assume there exists at least $1$ arrow $c$ with tail $h(q)$ such that $cq$ is admissible. We are given that $p$ is non-trivial, and so $q$ is non-trivial. 
We are also assuming that $I$ is special,  and so for an arrow $b\neq c$ with tail $h(q)$ the path $bq$ is inadmissible, and so $bq\in I$ for any such $b$. 

For any $\lambda '\in M$ we have $\lambda'=\lambda q$ for some $\lambda \in L$ since $M\subseteq \Lambda q$. 
In this case, from the argument above there exists $\mu\in \Lambda$ such that $\lambda q=\mu cq$. 
Thus we have $M\subseteq \Lambda cq$ and this means $cq\in P$, contradicting the maximality of $q$. 
Hence $M=\Lambda q$.  
\end{proof}

A dual argument yields the following result concerning right modules.

\begin{lem}\label{lem-right-uniserial-cyclic-modules-from-admissible-paths} 
Any non-trivial right $\Lambda$-submodule of $p\Lambda $ is of the form $q\Lambda$ where $q$ is an admissible path and such that $p$ is a left subpath of $q$. 
\end{lem}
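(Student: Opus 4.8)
The plan is to mirror the proof of \Cref{lem-left-uniserial-cyclic-modules-from-admissible-paths}, systematically exchanging left and right, heads and tails, and the conditions (SP1) and (SP2). Fix the non-trivial admissible path $p$ and a non-trivial right $\Lambda$-submodule $0 \neq M \subseteq p\Lambda$, and let $P$ be the set of admissible paths $q$ that have $p$ as a left subpath and satisfy $M \subseteq q\Lambda$. By \Cref{lem-initial-termina-subpaths-special}(2) any two elements of $P$ share the left arrow of $p$, so ``being a left subpath of'' restricts to a total order on $P$ equivalent to ordering by length.

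I would then check that $P$ is finite and non-empty. It is non-empty since $p \in P$. For finiteness I would invoke the Krull-intersection property from \S\ref{subsec-krull-intersect}, valid for finitely generated right $\Lambda$-modules because $\Lambda$ is module-finite over $R$ (as $I$ is bounded below, by \Cref{lem-admissible-means-R-mod-finite}): if $P$ were infinite it would contain paths $q$ of arbitrarily large length $\ell$, giving $M \subseteq q\Lambda \subseteq e_{h(q)}\jacrad^{\ell}$ and hence $M = 0$, a contradiction. So I may choose $q \in P$ of maximal length.

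The remaining step, which I expect to be the only delicate one, is to show $M = q\Lambda$. Suppose not; set $e = e_{t(q)}$ and $L = \{\lambda \in e\Lambda \mid q\lambda \in M\}$. Since $M \neq q\Lambda$ we have $e + I \notin L$, and since $I$ is arrow-radical the module $e\Lambda$ is local, so $L \subseteq \rad(e\Lambda) = \sum_{h(a) = t(q)} a\Lambda$. Applying (SP2) to the right arrow of $q$ leaves at most one arrow $c$ with $h(c) = t(q)$ and $qc$ admissible; for every other arrow $a$ with $h(a) = t(q)$ the path $qa$ lies in $I$, hence vanishes in $\Lambda$. (If no such $c$ existed, then $q\lambda = 0$ for all $\lambda \in L$, contradicting $M \neq 0$.) Writing any element of $M$ as $q\lambda$ with $\lambda \in L$ and decomposing $\lambda$ along $\rad(e\Lambda) = \sum a\Lambda$, all terms but the one coming from $c$ annihilate $q$ on the left, so $q\lambda \in qc\Lambda$. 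Thus $M \subseteq qc\Lambda$, and as $p$ is still a left subpath of the admissible path $qc$, whose length exceeds that of $q$, we obtain $qc \in P$, contradicting maximality. Hence $M = q\Lambda$ with $p$ a left subpath of $q$, as required. The main obstacle is simply making sure the dualisation is carried out consistently --- in particular that it is (SP2), and condition (ii) of arrow-radicality, that are invoked here.
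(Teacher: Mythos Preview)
Your proposal is correct and is exactly the dual argument the paper has in mind; the paper itself gives no separate proof, stating only that a dual argument to \Cref{lem-left-uniserial-cyclic-modules-from-admissible-paths} yields the result. Your dualisation is carried out consistently, including the use of \Cref{lem-initial-termina-subpaths-special}(2), condition (ii) of arrow-radicality, and (SP2) in place of (SP1).
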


Recall uniserial noetherian modules are local. 
A direct consequence of \Cref{lem-left-uniserial-cyclic-modules-from-admissible-paths} is \Cref{lem-left-uniserial-cyclic-radicals} that describes the maximal submodules of the uniserial modules we are dealing with. 

\begin{lem}
\label{lem-left-uniserial-cyclic-radicals} 
There is a unique maximal $\Lambda$-submodule of $\Lambda p$ that is  either trivial or of the form $\Lambda xp$ for $x$ an arrow such that $t(x)=h(p)$ and such that $xp$ is admissible. 
\end{lem}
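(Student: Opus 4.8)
The plan is to show first that $\Lambda p$ is a local module, so that its unique maximal submodule is forced to be $\rad(\Lambda p)=\jacrad p$, and then to identify this radical concretely using \Cref{lem-left-uniserial-cyclic-modules-from-admissible-paths}.

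First I would observe that right multiplication by $p$ defines a surjection $\Lambda e_{h(p)}\to \Lambda p$ of left $\Lambda$-modules, and that $\Lambda p\neq 0$ since $p=e_{h(p)}p\notin I$ as $p$ is admissible. Since $I$ is arrow-radical, the module $\Lambda e_{h(p)}$ is local by \Cref{lem-arrow-direct-radical-implies-semiperfect-basic}, and a non-zero quotient of a local module is again local; hence $\Lambda p$ is local and therefore has a unique maximal submodule, namely $\rad(\Lambda p)$. Because $I$ is bounded below, $\Lambda$ is module-finite over $R$ by \Cref{lem-admissible-means-R-mod-finite} and hence semilocal, so $\rad(\Lambda p)=\jacrad p$.

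Next I would split into two cases. If $\jacrad p=0$ the maximal submodule is trivial and there is nothing more to prove. Otherwise $\jacrad p$ is a non-trivial submodule of $\Lambda p$, so by \Cref{lem-left-uniserial-cyclic-modules-from-admissible-paths} it has the form $\Lambda q$ for an admissible path $q$ having $p$ as a right subpath; moreover $q\neq p$, since $\Lambda p\neq \jacrad p$ by Nakayama's lemma, so $q=q'p$ with $q'$ non-trivial. Letting $x$ be the right arrow of $q'$, I would note that $xp$ is a right subpath of $q$, hence admissible (a subpath of an admissible path cannot lie in the two-sided ideal $I$), and that $t(x)=h(p)$ because $xp$ is a genuine composition of paths.

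Finally I would verify the containments $\Lambda q\subseteq \Lambda xp\subseteq \jacrad p$: the first holds because $q=q''xp$ for some (possibly trivial) path $q''$, so $\Lambda q=\Lambda q''xp\subseteq \Lambda xp$; the second holds because $x$ is an arrow, so $x+I\in\jacrad$ by arrow-radicality, whence $\Lambda xp\subseteq \jacrad p$. Combined with $\jacrad p=\Lambda q$ this sandwiches $\Lambda xp$ and forces $\Lambda xp=\jacrad p$, which is exactly the asserted form for the unique maximal submodule. I do not expect a serious obstacle here; the only points needing care are checking that $\Lambda p$ is genuinely non-zero (so that ``local'' rather than ``zero'' applies) and correctly passing from the abstract radical to a cyclic module generated by a path via \Cref{lem-left-uniserial-cyclic-modules-from-admissible-paths}.
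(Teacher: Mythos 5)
Your proof is correct and follows essentially the same route as the paper: both reduce to $\rad(\Lambda p)=\jacrad p$ via module-finiteness and semilocality, note $\Lambda p$ is local, and then invoke \Cref{lem-left-uniserial-cyclic-modules-from-admissible-paths} to realise the radical as $\Lambda xp$ for an arrow $x$ with $t(x)=h(p)$ and $xp$ admissible. The only cosmetic difference is that you extract the arrow $x$ from the path $q$ representing $\jacrad p$ and sandwich $\Lambda q\subseteq\Lambda xp\subseteq\jacrad p$, whereas the paper produces the arrow directly from arrow-radicality and the non-triviality of $\jacrad p$; the substance is the same.
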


\begin{proof}
By \Cref{string-setup} we are assuming $I$ is bounded below and arrow radical. 
By \Cref{thm-admissible-presentations} this means $\Lambda$ is module-finite as an algebra over $R$. 
By the discussion in \S\ref{sec-module-finite-algebras}, since $R$ is local noetherian this means the ring $\Lambda$ is semilocal. 
Hence $\rad(\Lambda p)=\jacrad p$.  

By assuming this module is non-trivial, since $I$ is arrow-radical there must exist an arrow $c$ with tail $h(p)$ and such that $cp$ is admissible. 
By \Cref{lem-left-uniserial-cyclic-modules-from-admissible-paths} any submodule of $\Lambda p$ has the form $\Lambda q$ where $q$ is admissible and where $p$ is a right subpath of $q$. 
Thus $\Lambda cp$ is the unique maximal $\Lambda$-submodule of $\Lambda p$.
\end{proof}

As above, we state without proof the result concerning right modules. 

\begin{lem}
\label{lem-right-uniserial-cyclic-radicals} 
There is a unique maximal $\Lambda$-submodule of $p\Lambda$ that is  either trivial or of the form  $px\Lambda $ for $x$ an arrow such that $t(p)=h(x)$ and such that $px$ is admissible. 
\end{lem}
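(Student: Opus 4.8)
The plan is to argue by duality with \Cref{lem-left-uniserial-cyclic-radicals}, translating each left-module assertion into its right-module mirror and invoking \Cref{lem-right-uniserial-cyclic-modules-from-admissible-paths} where the left-module proof invoked \Cref{lem-left-uniserial-cyclic-modules-from-admissible-paths}. First I would recall from \Cref{string-setup} that $I$ is arrow-radical and bounded below, so by \Cref{thm-admissible-presentations} the algebra $\Lambda$ is module-finite over $R$ and hence semilocal by \S\ref{sec-module-finite-algebras}; therefore $\rad(p\Lambda)=p\jacrad$. Right multiplication by $p$ realizes $p\Lambda$ as a quotient of $e_{t(p)}\Lambda$, which is local because $I$ is arrow-radical, so $p\Lambda$ is itself local (it is non-zero since $p$ is admissible); thus $p\jacrad$ is the unique maximal submodule of $p\Lambda$, and the lemma reduces to identifying $p\jacrad$.

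For that, I would compute $p\jacrad = p\,(e_{t(p)}\jacrad) = \sum_{h(a)=t(p)} p a\Lambda$, using that $e_{t(p)}\jacrad=\rad(e_{t(p)}\Lambda)=\sum_{h(a)=t(p)} a\Lambda$ by condition (ii) of \Cref{defn-arrow-radical}. For each arrow $a$ with $h(a)=t(p)$ the element $pa$ is a path having $p$ as a left subpath; either $pa\in I$, in which case $pa\Lambda=0$, or $pa$ is admissible, in which case $pa\Lambda$ is a non-zero right submodule of $p\Lambda$ of exactly the shape claimed (matching the description in \Cref{lem-right-uniserial-cyclic-modules-from-admissible-paths}). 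Hence $p\jacrad$ is the sum of the modules $px\Lambda$ taken over the arrows $x$ with $h(x)=t(p)$ such that $px$ is admissible.

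It then remains to see that at most one such arrow $x$ exists. Letting $b$ denote the right arrow of $p$, any admissible path $px$ contains $bx$ as a length-$2$ admissible subpath with $h(x)=t(b)=t(p)$, so (SP2) applied to $b$ pins down $x$ uniquely. Consequently $p\jacrad$ is either $0$ or equals $px\Lambda$ for this unique $x$, which is the assertion. I do not expect a genuine difficulty beyond bookkeeping; the points that need care are keeping the head/tail and left/right conventions straight --- in particular that it is appending an arrow on the \emph{right} of $p$ that produces submodules of $p\Lambda$, and that the relevant specialness condition here is (SP2) rather than (SP1) --- and checking that the surjection $e_{t(p)}\Lambda\to p\Lambda$ legitimately transports the words ``local'' and ``radical'' as used above.
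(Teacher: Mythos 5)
Your proof is correct. It is close in spirit to the paper's intended argument (the paper proves only the left-module version, \Cref{lem-left-uniserial-cyclic-radicals}, and leaves this statement as the dual), but the concluding mechanism differs: the paper identifies the unique maximal submodule with $\jacrad p$ (dually $p\jacrad$) via semilocality and then appeals to the submodule classification of \Cref{lem-left-uniserial-cyclic-modules-from-admissible-paths} (dually \Cref{lem-right-uniserial-cyclic-modules-from-admissible-paths}), whose proof rests on the Krull-intersection property, to see that every proper submodule sits inside $\Lambda cp$ (dually $px\Lambda$). You bypass that classification entirely: you note that $p\Lambda$ is a non-zero quotient of the local module $e_{t(p)}\Lambda$, hence local, and compute $p\jacrad=p\,e_{t(p)}\jacrad=\sum_{h(a)=t(p)}pa\Lambda$ directly from condition (ii) of \Cref{defn-arrow-radical}, with (SP2) applied to the right arrow of $p$ forcing at most one arrow $x$ with $px$ admissible. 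This makes the lemma independent of the Krull-intersection machinery (which is still needed elsewhere, e.g.\ for \Cref{lem-right-uniserial-cyclic-modules-from-admissible-paths} itself), at the cost of redoing a small radical computation; both routes invoke specialness at the same point, (SP2) here versus (SP1) on the left. One cosmetic correction: the surjection $e_{t(p)}\Lambda\to p\Lambda$, $e_{t(p)}\lambda\mapsto p\lambda$, is the map $\leftmult{p}$ of \Cref{notn-proj-covers}, i.e.\ \emph{left} multiplication by $p$, not right multiplication; your use of it is otherwise accurate, and your appeal to \Cref{thm-admissible-presentations} and semilocality to get $\rad(p\Lambda)=p\jacrad$ is exactly what the paper does.
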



\begin{notn}
\label{notn-proj-covers}
We use the following notation for the maps given by multiplication by $p$.
\[
\begin{array}{ccccc}
 \rightmult{p}\colon \Lambda e_{h(p)}\to \Lambda p,    
 & \lambda e_{h(p)}\mapsto\lambda p, &  \hspace{10mm}&
 \leftmult{p}\colon e_{t(p)}\Lambda \to p\Lambda,   
 &
e_{t(p)}\lambda \mapsto p\lambda.
\end{array}
\] 
The symbol $\mathsf{m}$ is intended as a mnemonic, standing for \emph{multiplication} by $p$. 
\end{notn}

\Cref{lem-left-uniserial-cyclic-modules-uniqueness} and \Cref{lem-right-uniserial-cyclic-modules-uniqueness} concern the uniqueness of the uniserials generated by admissible paths.

\begin{lem}
\label{lem-left-uniserial-cyclic-modules-uniqueness} 
If $q$ is a non-trivial admissible path with the same right arrow as $p$ and $\Lambda p=\Lambda q$  then $p=q$. 
\end{lem}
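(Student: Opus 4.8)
The plan is to use the combinatorial rigidity supplied by Lemma~\ref{lem-initial-termina-subpaths-special}(1) together with the uniseriality established in Lemma~\ref{lem-left-uniserial-cyclic-modules-from-admissible-paths}. Since $p$ and $q$ are both non-trivial admissible paths with the same right arrow, Lemma~\ref{lem-initial-termina-subpaths-special}(1) says one is a right subpath of the other; without loss of generality $q=wp$ for some path $w$, so that $\Lambda q=\Lambda wp\subseteq \Lambda p$, and the hypothesis $\Lambda p=\Lambda q$ forces $\Lambda wp=\Lambda p$. If $w$ is trivial we are done, so suppose $w$ is non-trivial. Then $wp$ is a path strictly longer than $p$, and its right subpath $p$ is proper.

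The core of the argument is then to derive a contradiction from $\Lambda wp = \Lambda p$. First I would invoke Lemma~\ref{lem-left-uniserial-cyclic-radicals}: the unique maximal submodule of $\Lambda p$ is either trivial or of the form $\Lambda xp$ for a suitable arrow $x$ with $xp$ admissible. Since $q=wp$ with $w$ non-trivial, writing $w = w'x$ where $x$ is the left arrow of $w$, we have $wp = w'(xp)$, so $\Lambda q = \Lambda wp \subseteq \Lambda xp \subseteq \rad(\Lambda p)$ (the last inclusion because $xp$ is a non-trivial proper right-extension of $p$, hence lies in $\jacrad p$, using that $I$ is arrow-radical and bounded below so that $\Lambda$ is module-finite over $R$ and semilocal, as recorded in the proof of Lemma~\ref{lem-left-uniserial-cyclic-radicals}). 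But $\Lambda q = \Lambda p$ cannot be contained in its own radical $\rad(\Lambda p) \subsetneq \Lambda p$ unless $\Lambda p = 0$, which is impossible since $p$ is admissible, i.e.\ $\Lambda p \neq 0$. This contradiction shows $w$ must be trivial, whence $p = q$.

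The main obstacle is bookkeeping the direction of the subpath relation correctly and making sure the ``proper right-extension of an admissible path lands in the radical'' step is genuinely justified: one needs that for any arrow $x$ with $t(x)=h(p)$, the element $xp+I$ lies in $\jacrad p$, which follows because $\rad(\Lambda e_{h(p)}) = \sum_{t(a)=h(p)}\Lambda a$ by the arrow-radical hypothesis, so $x + I \in \jacrad e_{h(p)}$ and hence $xp + I = (x+I)(p+I) \in \jacrad p$. Once this is in place the rest is the short Nakayama-type observation that a non-zero finitely generated module cannot equal its own radical. I expect no serious difficulty beyond being careful that $q$ being a right subpath of $p$ (rather than $p$ of $q$) is handled symmetrically, but in that case $p=wq$ forces length $p \geq$ length $q$ and $\Lambda p \subseteq \Lambda q = \Lambda p$; running the same radical argument with the roles reversed again yields $w$ trivial, so $p=q$.
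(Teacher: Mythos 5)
Your argument is correct and takes essentially the same route as the paper: reduce via Lemma~\ref{lem-initial-termina-subpaths-special}(1) to one path being a proper right extension of the other, then derive a contradiction from the arrow-radical hypothesis and Nakayama's lemma. The only differences are cosmetic: the paper applies Nakayama to the local projective $\Lambda e_{h(q)}$ via the kernel of right multiplication by $q$, whereas you apply it directly to $\Lambda p$ through the containment $\Lambda wp\subseteq \jacrad p$ (and the arrow $x$ you split off from $w$ is its right arrow, not its left arrow, though your use of it is correct).
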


\begin{proof}
Using that $q$ is non-trivial and assumed to have the same right arrow as $p$, by \Cref{lem-initial-termina-subpaths-special} either $p$ is a right subpath of $q$ or $q$ is a right subpath of $p$. 
Without loss of generality we can assume $p=p'q$ for some path $p'$ with tail $h(q)$. 
Let $e$ be the trivial path at $h(q)$ and consider $\rightmult{q}\colon \Lambda e\to \Lambda q$.

Note that $\rightmult{q}\neq 0$ since $q$ is admissible, and since $I$ is arrow-radical $\ker(\rightmult{q})$ is contained in the unique maximal submodule $\jacrad e$ of $\Lambda e$ generated by the arrows with tail $h(q)$. 
Since $q\in \Lambda q=\Lambda p'q$ there exists $\lambda \in \Lambda$ such that $e-\lambda p'\in \ker(\rightmult{q})$. 
Now if one assumes $p'$ is a non-trivial path we have $\lambda p'\in \jacrad e$, that would mean that $e\in \jacrad e$ and hence that $\Lambda e=0$ by Nakayamas lemma, and this is impossible. 
%
\end{proof}

\begin{lem}
\label{lem-right-uniserial-cyclic-modules-uniqueness} 
If $q$ is a non-trivial admissible path with the same left arrow as $p$ and $p\Lambda =q\Lambda $  then $p=q$. 
\end{lem}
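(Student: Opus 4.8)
The plan is to dualise the proof of \Cref{lem-left-uniserial-cyclic-modules-uniqueness} verbatim: one exchanges left modules for right modules, the map $\rightmult{q}$ for $\leftmult{q}$ from \Cref{notn-proj-covers}, and ``right subpath'' for ``left subpath'' throughout. Since the statement is literally the mirror image of \Cref{lem-left-uniserial-cyclic-modules-uniqueness}, no new idea is required; the only thing needing attention is keeping the left/right and subpath conventions straight.

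First I would apply \Cref{lem-initial-termina-subpaths-special}(2): as $q$ is non-trivial and shares its left arrow with the non-trivial admissible path $p$, either $p$ is a left subpath of $q$ or $q$ is a left subpath of $p$. Since $p$ and $q$ are both non-trivial and admissible their roles are symmetric here, so without loss of generality I may assume $q$ is a left subpath of $p$, say $p = q p'$ for a path $p'$ with $h(p') = t(q)$; it then suffices to show that $p'$ is trivial. Writing $e = e_{t(q)}$, I would consider $\leftmult{q}\colon e\Lambda\to q\Lambda$. Since $q$ is admissible, $q\notin I$, so $\leftmult{q}(e) = q\neq 0$ in $\Lambda$ and hence $\ker(\leftmult{q})$ is a proper right submodule of $e\Lambda$. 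Because $I$ is arrow-radical, $e\Lambda$ is a local right module whose unique maximal submodule is $e\jacrad = \sum_{h(a)=t(q)} a\Lambda$ (condition (ii) of \Cref{defn-arrow-radical}), so $\ker(\leftmult{q})\subseteq e\jacrad$. From $p\Lambda = q\Lambda$ and $p = q p'$ we get $q\in q p'\Lambda$, i.e. $q = q p'\lambda$ for some $\lambda\in\Lambda$, and therefore $e - p'\lambda\in\ker(\leftmult{q})\subseteq e\jacrad$.

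Finally I would argue by contradiction. If $p'$ is non-trivial then its left arrow $a$ satisfies $h(a) = t(q)$, so $p'\in a\Lambda\subseteq e\jacrad$ and hence $p'\lambda\in e\jacrad$; combined with $e - p'\lambda\in e\jacrad$ this gives $e\in e\jacrad = \rad(e\Lambda)$, whence $e\Lambda = \rad(e\Lambda)$ and Nakayama's lemma forces $e\Lambda = 0$, contradicting that $e\Lambda$ is a non-zero local module. Thus $p'$ is trivial and $p = q$. I do not expect a genuine obstacle: the argument runs entirely parallel to that of \Cref{lem-left-uniserial-cyclic-modules-uniqueness}, and the only mild care is in checking that $\ker(\leftmult{q})$ is proper (which is exactly where admissibility of $q$ is used) and in invoking the right-module half of the arrow-radical condition.
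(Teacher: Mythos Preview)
Your proposal is correct and is exactly the approach the paper takes: the paper states \Cref{lem-right-uniserial-cyclic-modules-uniqueness} without proof as the dual of \Cref{lem-left-uniserial-cyclic-modules-uniqueness}, and your argument is the straightforward left--right dualisation of that proof.
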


\begin{proof}[Proof of \Cref{thm-main-string-biserial-multi}.]
By assumption $I$ is arrow-radical, bounded below and special. 
Furthermore $p$ is an admissible non-trivial path with left arrow $b$ and right arrow $a$. 

(1) 
Any left submodule of $\Lambda p$ has the form $\Lambda q$ for $q$ admissible with right subpath $p$ by \Cref{lem-left-uniserial-cyclic-modules-from-admissible-paths}.  

(2) 
Any right submodule of $p\Lambda $ has the form $q\Lambda$ for $q$ admissible with left subpath $p$ by \Cref{lem-right-uniserial-cyclic-modules-from-admissible-paths}.  

For (3) and (4) we fix an admissible path $q$. 
We only prove (3) since (4) is symmetric. 

(3) If $q$ has right arrow $a$ and $\Lambda p=\Lambda q$  then $p=q$ by \Cref{lem-right-uniserial-cyclic-modules-uniqueness}. 
\end{proof}

\subsection{Arrow-direct ideals} Our definition of a string algebra over a local ring uses the following notion.  

\begin{defn}
\label{defn-arrow-direct}
We say $I$ is   \emph{arrow}-\emph{direct} if (v) and (vi) below hold for  any arrow $a$. 
\begin{enumerate}[label={\upshape(\roman*)}]
\setcounter{enumi}{4}
    \item $\Lambda a\cap \sum \Lambda b=0$ where the sum runs through the arrows $b\neq a$ with $t(b)=t(a)$. 
    \item $a\Lambda \cap \sum b\Lambda =0$ where the sum runs through the arrows $b\neq a$ with $h(b)=h(a)$. 
\end{enumerate}
\end{defn}

Hence any arrow-direct ideal it is arrow-distinct. 

\begin{example}
\label{running-2-by-2-example-part-3}
We continue with \Cref{running-2-by-2-example-part-1},  where $Q$ was given by a pair of loops $a,b$ incident at $u$ and $I=\langle a^{2},b^{2},ab+ba-\pi e\rangle$ where $\maxideal=\langle \pi \rangle$ for $\pi \in R$ regular. 
We claim that $I$ is arrow-direct. 

Hence it suffices to show $\Lambda a\cap \Lambda b=0$. 
Recall $G=\langle  a^{2},b^{2}\rangle$ is the $R$-span of the inadmissible paths (so those with $a^{2}$ or $b^{2}$ as a subpath) and recall: $H$ is the $R$-span of the (other paths, that is, the) admissible paths, so that $RQ=G\oplus H$;  $H'$ is the $R$-span of the non-trivial admissible paths that have right arrow $a$; and  $H''$ is the $R$-span of the non-trivial admissible  paths with right arrow $b$. 
See \Cref{running-2-by-2-example-part-2}. 

Suppose $d\in RQ$ such that $d+I\in \Lambda a\cap \Lambda b$. 
Note that $d\in A$, and written as an element of $H\oplus G$, we can assume $d\in H$ and so there exist $s\in H'$ and $t\in H''$ such that $s-d,d-t\in I$. 
In particular $v\in I$ where $v=s+t\in H$. 
The technical claim from \Cref{running-2-by-2-example-part-2} gives the existence of $y\in H'$ and $z\in H''$ where
\[
H'\ni y\pi +s-yba=-z\pi -t+zab\in H''. 
\]
By construction $H'\cap H''=0$, so $s=y(ab+ba-\pi e)-yab\in I$, and so $d\in I$. 
\end{example}


\begin{lem}
    \label{uniserial-module-left-right-compatability-one-side}
If $I$ is arrow-direct and if $p$ has left arrow $a$ then  $\jacrad p\cap a\Lambda\subseteq p\jacrad $ where $\jacrad=\rad(\Lambda)$.
\end{lem}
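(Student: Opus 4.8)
I would argue by induction on the length of $p$, so suppose the assertion is known for every admissible non-trivial path of smaller length (a vacuous hypothesis when $p$ has length $1$). Keep the standing hypotheses of \Cref{string-setup}, so that $I$ is also bounded above by \Cref{thm-admissible-presentations}, and recall that since $I$ is arrow-radical one has $\jacrad=\sum_{c}c\Lambda$ with $c$ ranging over the arrows. Write $p=ap'$, where $a$ is the left arrow of $p$; if $p$ has length at least $2$ then $p'$ is non-trivial, and by \Cref{lem-initial-termina-subpaths-special} its left arrow is the unique arrow $\delta$ with $a\delta$ admissible. Fix $z\in\jacrad p\cap a\Lambda$; we may assume $z\neq 0$.

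The first step is to factor $z$ through $a$ using arrow-directness. Write $z=\eta p$ with $\eta\in\jacrad$, expand $\eta=\sum_{c}c\mu_{c}$ along $\jacrad=\sum_{c}c\Lambda$, so $z=\sum_{c}(c\mu_{c})p$. Since $z\in a\Lambda$ we have $z=e_{h(a)}z$, which kills every summand with $h(c)\neq h(a)$, so that $z-a\mu_{a}p$ lies in $a\Lambda\cap\sum_{c\neq a,\,h(c)=h(a)}c\Lambda$, which is $0$ by condition (vi) of \Cref{defn-arrow-direct}. Hence $z=a\mu_{a}p$, and after replacing $\mu_{a}$ by $e_{t(a)}\mu_{a}e_{h(a)}$ (which changes nothing, as $ae_{t(a)}=a$ and $e_{h(a)}p=p$) we may take $\mu_{a}\in e_{t(a)}\Lambda e_{h(a)}$, so that $\mu_{a}a\in e_{t(a)}\Lambda e_{t(a)}$.

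The second step reduces matters to the inductive hypothesis. By \Cref{lem-arrow-direct-radical-implies-semiperfect-basic-part-2} the corner $e_{t(a)}\Lambda e_{t(a)}$ is local with residue field $k$, so write $\mu_{a}a=s\,e_{t(a)}+w$ with $s\in R$ and $w\in e_{t(a)}\jacrad e_{t(a)}$; then $z=a(\mu_{a}a)p'=sp+a(wp')$. I claim $a(wp')\in p\jacrad$. If $p$ has length $1$ this is immediate: $p'=e_{t(a)}$ and $a(wp')=aw\in a\jacrad=p\jacrad$. If $p$ has length $\geq 2$, write $w=\sum_{c}c\nu_{c}$ (only arrows $c$ with $h(c)=t(a)$ occur, as $w\in e_{t(a)}\jacrad$); then $a(wp')=\sum_{c}(ac)(\nu_{c}p')$, and by (SP2) of \Cref{defn-spec-quiver-and-relations} the length-two path $ac$ is admissible only for $c=\delta$, so $a(wp')=a(\delta\nu_{\delta}p')$. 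Here $\delta\nu_{\delta}p'$ lies in $\jacrad p'$ (being $(\delta\nu_{\delta})p'$ with $\delta\nu_{\delta}\in\delta\Lambda\subseteq\jacrad$) and in $\delta\Lambda$ (being $\delta(\nu_{\delta}p')$); the inductive hypothesis applied to $p'$ then gives $\delta\nu_{\delta}p'\in p'\jacrad$, say $\delta\nu_{\delta}p'=p'\rho$ with $\rho\in\jacrad$, whence $a(wp')=ap'\rho=p\rho\in p\jacrad$.

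It remains to dispose of $s$. If $s\in\maxideal$ then its image in $\Lambda$ lies in $\Lambda\maxideal\subseteq\jacrad$, so $sp\in p\jacrad$ and $z=sp+a(wp')\in p\jacrad$, as wanted. If $s$ is a unit, then writing $a(wp')=p\rho'$ with $\rho'\in\jacrad$ and using $z=\eta p$ gives $(1-s^{-1}\eta)p=-p(s^{-1}\rho')$, hence $p=u\,p\,\rho''$ for a unit $u$ and some $\rho''\in\jacrad$; iterating, $p\in u^{n}p\jacrad^{n}\subseteq\jacrad^{n+1}$ for every $n$, so $p=0$ in $\Lambda$ by the Krull-intersection property of \S\ref{subsec-krull-intersect}, contradicting that $p$ is admissible. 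This closes the induction. The step I expect to be the main obstacle is the middle reduction: one must absorb idempotents carefully so that $\mu_{a}a$ genuinely lands in the local corner $e_{t(a)}\Lambda e_{t(a)}$, and then — after collapsing all non-$\delta$ summands via (SP2) — recognise that $a(wp')$ is $a$ times an element of $\jacrad p'\cap\delta\Lambda$, which is exactly the shape the inductive hypothesis addresses; the one-line use of arrow-directness and the Krull-intersection endgame are comparatively routine.
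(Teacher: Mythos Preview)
Your proof is correct, but it takes a genuinely different route from the paper's. The paper argues directly, with no induction: writing $\lambda=\sum_{q}r_{q}q+I$ over non-trivial admissible paths $q$, arrow-directness (condition~(vi)) forces $\lambda p=\sum_{q\in P}r_{q}qp+I$ where $P$ is the set of admissible paths with left arrow $a$, $t(q)=h(p)$ and $qp$ admissible. Then \Cref{lem-initial-termina-subpaths-special}(2) gives $qp=pq'$ for each $q\in P$ (since $qp$ and $p$ share the left arrow $a$ and $qp$ is longer), and \Cref{lem-right-uniserial-cyclic-radicals} identifies $p\jacrad=px\Lambda$ for the unique relevant arrow $x$, finishing the proof in one stroke. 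Your approach instead descends through $p=ap'$ using (SP2) and induction, never invoking \Cref{lem-initial-termina-subpaths-special} or \Cref{lem-right-uniserial-cyclic-radicals} directly; this is more elementary in that sense, at the cost of a longer argument.

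One simplification: your ``$s$ is a unit'' case is vacuous. Since $a\in\jacrad$ (because $I$ is arrow-radical), the product $\mu_{a}a$ already lies in $e_{t(a)}\jacrad e_{t(a)}$, so its image in the residue field $e_{t(a)}\Lambda e_{t(a)}/e_{t(a)}\jacrad e_{t(a)}\cong k$ is zero and hence $s\in\maxideal$ automatically. You may therefore take $s=0$ and $w=\mu_{a}a$ from the outset and delete the Krull-intersection endgame entirely; what remains is a clean induction.
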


\begin{proof}
Fix an arbitrary element of $\jacrad p\cap a\Lambda$, that must have the form $\lambda p$ with $\lambda\in\jacrad $. 
Since $I$ is arrow-radical we have $\lambda=\sum_{q}r_{q} q +I$ for  $r_{q}\in R$ finitely supported over the non-trivial admissible paths  $q$. 
Note that $r_{q}=0$ when $h(q)\neq h(a)$ since $\lambda p\in a\Lambda\subset e_{h(a)}\Lambda$. 

Let $P$ be the set of admissible paths $q$ with left arrow $a$, $t(q)=h(p)$ and with $qp$ admissible. 
Let $\mu=\sum_{q\notin P}r_{q}q + I$. 
Hence $(\lambda-\mu)p\in a\Lambda$ and  $\mu\in \sum b\Lambda$ where the sum  runs over arrows $b\neq a$ with $h(b)=h(a)$. 
Since $\lambda p \in a\Lambda $ we have $\mu p\in a\Lambda $. 
Since  $\mu p\in \sum b\Lambda$ and $I$ is arrow-direct this gives  $\lambda p=\sum_{q\in P}r_{q}qp+I$. 

Now, by construction, for any $q\in P$ the path $qp$ is  admissible, has left arrow $a$ and is longer than $p$. 
By \Cref{lem-initial-termina-subpaths-special} this means any  $q\in P$ satisfies $qp=pq'$ for some admissible path $q'$ that depends on $q$. 
Note than any such $q'$ must be non-trivial since $q$ is non-trivial, and since they must have the same length. 
Furthermore, there exists an arrow $x$ such that any such $q'$ has left arrow $x$. 

Combining this observation with the equation $\lambda p=\sum_{q\in P}r_{q}qp+I$, we have that $\lambda p\in px\Lambda$. 
From here it suffices to recall that  $px\Lambda=p\jacrad $ by \Cref{lem-right-uniserial-cyclic-radicals}. 
\end{proof}

\Cref{uniserial-module-left-right-compatability-one-side} follows from a dual argument to that used in the proof of \Cref{uniserial-module-left-right-compatability-other-side}. 

\begin{lem}
    \label{uniserial-module-left-right-compatability-other-side}
If $I$ is arrow-direct and $p$ has right arrow $a$ then $p\jacrad \cap \Lambda a\subseteq \jacrad p$  where $\jacrad=\rad(\Lambda)$.
\end{lem}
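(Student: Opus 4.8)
The plan is to carry out the argument proving \Cref{uniserial-module-left-right-compatability-one-side} with left and right interchanged throughout; in particular \Cref{lem-initial-termina-subpaths-special}(1) and \Cref{lem-left-uniserial-cyclic-radicals} now play the roles played there by \Cref{lem-initial-termina-subpaths-special}(2) and \Cref{lem-right-uniserial-cyclic-radicals}, and clause (v) of \Cref{defn-arrow-direct} replaces clause (vi). First I would fix an arbitrary element of $p\jacrad\cap\Lambda a$; since every element of $p\jacrad$ has the form $p\lambda$ with $\lambda\in\jacrad$, our element is such a $p\lambda$. As $I$ is arrow-radical one may write $\lambda=\sum_{q}r_{q}q+I$ with $r_{q}\in R$ finitely supported over the non-trivial admissible paths $q$, and because $p\lambda\in\Lambda a\subseteq\Lambda e_{t(a)}$ one has $r_{q}=0$ whenever $t(q)\neq t(a)$.

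Next I would isolate the relevant paths. Let $P$ be the set of admissible paths $q$ with right arrow $a$, with $h(q)=t(p)$, and with $pq$ admissible, and put $\mu=\sum_{q\notin P}r_{q}q+I$. Every $q\in P$ has right arrow $a$, so $\lambda-\mu\in\Lambda a$, hence $p(\lambda-\mu)\in\Lambda a$, and therefore $p\mu\in\Lambda a$. Among the paths $q\notin P$ still contributing to $\mu$, those with $h(q)\neq t(p)$ contribute $pq=0$ and those with $pq$ inadmissible contribute $pq\in I$, so the surviving summands of $p\mu$ come from paths whose right arrow is some $b\neq a$ with $t(b)=t(a)$; thus $p\mu\in\sum\Lambda b$, the sum running over such $b$. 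Clause (v) of \Cref{defn-arrow-direct} then forces $p\mu=0$, so $p\lambda=\sum_{q\in P}r_{q}\,pq+I$.

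It remains to see that each summand $pq$ with $q\in P$ lies in $\jacrad p$. Fixing such a $q$, the path $pq$ is admissible, strictly longer than $p$, and has the same right arrow $a$ as $p$, so by \Cref{lem-initial-termina-subpaths-special}(1) the path $p$ is a right subpath of $pq$; write $pq=q'p$. The right subpath of $pq=q'p$ of length one more than that of $p$ is then admissible and has the form $xp$, where $x$ is the right arrow of $q'$ and $t(x)=h(p)$; but by \Cref{lem-left-uniserial-cyclic-radicals} there is at most one arrow $x$ with $t(x)=h(p)$ and $xp$ admissible, so this $x$ is the same for every $q\in P$. Writing $q'=q''x$ gives $pq=q''(xp)\in\Lambda(xp)$. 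By \Cref{lem-left-uniserial-cyclic-radicals} again, $\Lambda(xp)$ is the unique maximal submodule of $\Lambda p$, so it equals $\jacrad p$. Summing over $q\in P$ yields $p\lambda\in\jacrad p$, as required; and if $P=\emptyset$ then $p\lambda=0$ already.

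The purely mechanical parts — deciding which summands of $p\mu$ vanish, and checking that heads and tails match in the factorisations $q'=q''x$ and $pq=q''(xp)$ — go exactly as in the dual lemma. The step I expect to be the crux, just as in \Cref{uniserial-module-left-right-compatability-one-side}, is the appeal to \Cref{lem-left-uniserial-cyclic-radicals}, where speciality enters: it is the uniqueness of the arrow $x$ extending $p$ that lets one collect all the terms $pq$ into the single submodule $\Lambda(xp)$, and hence conclude that their sum lies in $\jacrad p$.
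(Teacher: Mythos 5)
Your argument is correct and is essentially the paper's own proof: the paper proves \Cref{uniserial-module-left-right-compatability-one-side} in full and settles this lemma by precisely the dual argument you have spelled out, with the same decomposition of $\lambda$ into the part supported on $P$ and the remainder $\mu$, the same appeal to clause (v) of arrow-directness, and the same use of \Cref{lem-initial-termina-subpaths-special}(1). One small remark: the uniqueness of the arrow $x$ is really a consequence of speciality (SP1) rather than of the statement of \Cref{lem-left-uniserial-cyclic-radicals}, and in fact it is not strictly needed, since each $pq=q'p$ with $q'$ non-trivial already lies in $\jacrad p$ because $q'+I\in\jacrad$.
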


\begin{rem}
    \label{rem-projective-covers-by-path-multiplication} 
    Recall that by \Cref{string-setup} we are assuming that $I$ is arrow-radical. 
    
For any admissible path $p$ the maps $\rightmult{p}$ and $\leftmult{p}$ respectively give projective covers of the left module $\Lambda p$ and the right module $p\Lambda$.  
That these maps are projective covers follows from the fact that any idempotent of the form  $e_{v}+I\in \Lambda $  is (local, and hence)  primitive by \Cref{lem-arrow-direct-radical-implies-semiperfect-basic}; see \Cref{noetherian-semiperfect-lemma}. 
Note also that if $ap$ is inadmissible for each arrow $a$ with tail $v$ then $\ker(\rightmult{p})=\sum_{t(a)=v}\Lambda a=\jacrad e_{v}$. 
Dually if $pa$ is inadmissible for each arrow $a$ with head $v$ then $\ker(\leftmult{p})=\sum_{h(a)=v}a \Lambda =e_{v}\jacrad $. 
\end{rem}


\begin{lem}
    \label{lem-projective-covers-and-their-kernels-for-uniserial-left-modules}
    Let $I$ be arrow-direct, $v=h(p)$ and $a$ be an arrow with tail $v$ such that $ap$ is admissible. 
    The following statements hold for the sum $K_{a}=\bigoplus\Lambda b$ over arrows $b\neq a$ with $t(b)=v$. 
        \begin{enumerate}
            \item If $qp$ is admissible for every path $q$ with right arrow $a$ then $\ker(\rightmult{p})=K_{a}$. 
            \item If $q$ is the shortest path with right arrow $a$ and such that $qp$ is inadmissible then $\ker(\rightmult{p})=K_{a}\oplus \Lambda q$. 
        \end{enumerate}
\end{lem}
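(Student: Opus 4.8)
The plan is to locate $\ker(\rightmult{p})$ inside $\jacrad e_{v}$, peel off the summand $K_{a}$ using arrow-directness, and then reduce both assertions to an analysis of the intersection $\ker(\rightmult{p})\cap\Lambda a$ via the uniserial structure already established for $\Lambda a$.

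First I would record that by \Cref{lem-arrow-direct-radical-implies-semiperfect-basic} the module $\Lambda e_{v}$ is local, and since $I$ is arrow-radical its radical is $\jacrad e_{v}=\sum_{t(b)=v}\Lambda b$; by arrow-directness (\Cref{defn-arrow-direct}(v)) this sum is direct, so $\jacrad e_{v}=\Lambda a\oplus K_{a}$. Because $p$ is admissible the map $\rightmult{p}$ is non-zero, hence $\ker(\rightmult{p})$ is a proper submodule of the local module $\Lambda e_{v}$ and so $\ker(\rightmult{p})\subseteq\jacrad e_{v}$. Next I would show $K_{a}\subseteq\ker(\rightmult{p})$: writing $p=cp'$ where $c$ is the left arrow of $p$, the factorisation $ap=(ac)p'$ and admissibility of $ap$ force $ac$ to be an admissible length-$2$ path, so by (SP1) applied to the arrow $c$ (in \Cref{defn-spec-quiver-and-relations}) no arrow $b\neq a$ with $t(b)=v$ can make $bc$ admissible; thus $bc\in I$ and $bp=(bc)p'\in I$ for every such $b$, i.e.\ $\Lambda b\subseteq\ker(\rightmult{p})$. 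Combining $K_{a}\subseteq\ker(\rightmult{p})\subseteq\Lambda a\oplus K_{a}$ with the modular law yields $\ker(\rightmult{p})=K_{a}\oplus N$ where $N=\ker(\rightmult{p})\cap\Lambda a$.

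It then remains to compute $N$. Since $a$ is an admissible non-trivial path, $\Lambda a$ is a module of the kind treated in \Cref{lem-left-uniserial-cyclic-modules-from-admissible-paths}, so $N$ is either $0$ or of the form $\Lambda q_{0}$ for an admissible path $q_{0}$ having right arrow $a$; moreover $q_{0}\in N$ says precisely that $q_{0}p\in I$. In case (1) no path with right arrow $a$ is sent to $I$ by right multiplication by $p$, so $N=0$ and $\ker(\rightmult{p})=K_{a}$. In case (2), with $q$ the shortest path with right arrow $a$ and $qp$ inadmissible, the coset $q+I$ lies in $\Lambda a$ and satisfies $qp\in I$, so $0\neq q+I\in N$ and hence $N=\Lambda q_{0}$ as above. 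From $\Lambda q\subseteq\Lambda q_{0}$, \Cref{lem-left-uniserial-cyclic-modules-from-admissible-paths} together with the uniqueness statement \Cref{lem-left-uniserial-cyclic-modules-uniqueness} (equivalently \Cref{thm-main-string-biserial-multi}(3)) gives that $q_{0}$ is a right subpath of $q$; while $q_{0}$ has right arrow $a$ with $q_{0}p\in I$, so minimality of $q$ forces $q_{0}$ to be at least as long as $q$. Hence $q_{0}=q$ and $\ker(\rightmult{p})=K_{a}\oplus\Lambda q$.

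I expect the main obstacle to be the claim in the second paragraph that $bp$ is inadmissible for every arrow $b\neq a$ with $t(b)=v$: this is the only place where specialness of $I$ is genuinely exploited, and it rests on the observation that admissibility of a path is controlled by its length-$2$ left subpaths, so that $ac$ admissible rules out $bc$ admissible. Once this is secured, the remaining steps are routine bookkeeping with the modular law and the previously established description of submodules of $\Lambda a$.
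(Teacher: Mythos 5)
Your argument is correct and essentially the paper's own: it confines $\ker(\rightmult{p})$ to $\jacrad e_{v}=\Lambda a\oplus K_{a}$, uses (SP1) on the left arrow of $p$ to get $K_{a}\subseteq\ker(\rightmult{p})$, and identifies the remaining piece $\ker(\rightmult{p})\cap\Lambda a$ via \Cref{lem-left-uniserial-cyclic-modules-from-admissible-paths}, \Cref{lem-initial-termina-subpaths-special} and \Cref{lem-left-uniserial-cyclic-modules-uniqueness}, exactly as in the paper; your modular-law packaging merely replaces the paper's element-wise coefficient computation. Note only that, like the paper's proof, you tacitly read ``shortest path'' in (2) as ``shortest admissible path'' (needed to assert $q+I\neq 0$), which is the reading intended in \Cref{thm-main-string-biserial}(2).
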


\begin{proof}
Let $e$ be the trivial path at $v$ and let  $\lambda=\sum_{q\in P}r_{q}q+I$ be some element of $\Lambda e$ where the elements  $r_{q}\in R$ have finite support over the set $P$ of paths $q$ with tail $v$. 
Let $P'$ be the set of $q\in P$ with right arrow $a$ and let $P''=P\setminus P'$. 
So $e\in P''$. 

Since $p$ is admissible we cannot have $\rightmult{p}=0$, and so $\ker(\rightmult{p})\subseteq K_{a}+\Lambda a$ since $I$ is arrow-radical. 
Note also this sum is direct, since $I$ is arrow-direct. 
Assume now $\lambda\in \ker(\rightmult{p})$. 
By definition this means $\sum_{q\in P}r_{q}qp\in I$, and by our previous discussion  $r_{e}=0$. 
By \Cref{string-setup} we have that $I$ is special,  meaning $qp\in I$ for all non-trivial paths $q\in P''$. 
So far we have $r_{q}qp\in I$ for all $q\in P''$. 
Thus $\sum_{q\in P'}r_{q}qp\in I$.

Let $M_{a}=\Lambda a\cap \ker(\rightmult{p})$, and so $\ker(\rightmult{p})=K_{a}+M_{a}$ by what we have seen above. 
Since we are assuming $I$ to be arrow-direct, this sum is direct, so $\ker(\rightmult{p})=K_{a}\oplus M_{a}$. 

If $M_{a}\neq 0$ then it is a non-trivial submodule of $\Lambda a$, and so by 
 \Cref{lem-left-uniserial-cyclic-modules-from-admissible-paths} we have $M_{a}=\Lambda q'$ where $q'\in P'$. 
In this case, the inclusion $M_{a}\subseteq  \ker(\rightmult{p})$ gives $q'p\in I$ and so $q'p$ is inadmissible. 
Note $M_{a}=0$ if no such $q'$ exists, completing the proof of (2). 
Assuming such a path $q'$ does exist, it must be unique and of minimal length  by \Cref{lem-initial-termina-subpaths-special} and  \Cref{lem-left-uniserial-cyclic-modules-uniqueness}, as required. 
\end{proof}

\Cref{lem-projective-covers-and-their-kernels-for-uniserial-right-modules} follows from a dual argument to that used in the proof of \Cref{lem-projective-covers-and-their-kernels-for-uniserial-left-modules}. 

\begin{lem}
    \label{lem-projective-covers-and-their-kernels-for-uniserial-right-modules}
    Let $I$ be arrow-direct $v=t(p)$ and $a$ be an arrow with head $v$ such that $pa$ is admissible. 
    The following statements hold for the sum ${}_{a}K=\bigoplus b \Lambda$ over arrows $b\neq a$ with $h(b)=v$. 
        \begin{enumerate}
            \item If $pq$ is admissible for every path $q$ with left arrow $a$ then $\ker(\leftmult{p})={}_{a}K$. 
            \item If $q$ is the shortest path with left arrow $a$ and such that $pq$ is inadmissible then $\ker(\leftmult{p})={}_{a}K\oplus q\Lambda $. 
        \end{enumerate}
\end{lem}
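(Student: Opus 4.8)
The plan is to run the argument of \Cref{lem-projective-covers-and-their-kernels-for-uniserial-left-modules} with every left module replaced by the corresponding right module, the projective cover $\rightmult{p}$ replaced by $\leftmult{p}$, and the combinatorial inputs replaced by their right-handed analogues. By \Cref{string-setup} together with the hypothesis that $I$ is arrow-direct (hence arrow-radical, bounded below, permissible and special), all the facts needed for those analogues are available.

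First I would set $e=e_{v}$ with $v=t(p)$ and consider $\leftmult{p}\colon e\Lambda\to p\Lambda$, which is nonzero since $p$ is admissible, and which is a projective cover by \Cref{rem-projective-covers-by-path-multiplication}. An arbitrary element of $e\Lambda$ is a coset $\lambda=\sum_{q}r_{q}q+I$ with $r_{q}\in R$ of finite support over the set $P$ of paths $q$ with head $v$; write $P'$ for the paths in $P$ with left arrow $a$ and $P''=P\setminus P'$, so the trivial path $e$ lies in $P''$. Because $I$ is arrow-radical, $\ker(\leftmult{p})\subseteq e\jacrad$, and since $pa$ is admissible and $I$ is special we have $pb\in I$ for every arrow $b\neq a$ with $h(b)=v$, so ${}_{a}K\subseteq\ker(\leftmult{p})$; combining this with the decomposition $e\jacrad={}_{a}K\oplus a\Lambda$ (direct because $I$ is arrow-direct) gives $\ker(\leftmult{p})={}_{a}K\oplus{}_{a}M$ where ${}_{a}M=a\Lambda\cap\ker(\leftmult{p})$.

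Next I would identify ${}_{a}M$. If $\lambda\in\ker(\leftmult{p})$ then $\sum_{q\in P}r_{q}pq\in I$; since $p$ is admissible this forces $r_{e}=0$, and since $I$ is special the path $pq$ is inadmissible — hence in $I$ — for every non-trivial $q\in P''$, so $\sum_{q\in P'}r_{q}pq\in I$ and the $a\Lambda$-component $\sum_{q\in P'}r_{q}q+I$ of $\lambda$ lies in ${}_{a}M$. Under the hypothesis of (1) no admissible path with left arrow $a$ is annihilated by $\leftmult{p}$, so ${}_{a}M=0$ and $\ker(\leftmult{p})={}_{a}K$, proving (1). Otherwise ${}_{a}M$ is a nonzero right submodule of $a\Lambda$, so by \Cref{lem-right-uniserial-cyclic-modules-from-admissible-paths} it has the form $q'\Lambda$ for some admissible path $q'$ with left arrow $a$; the inclusion ${}_{a}M\subseteq\ker(\leftmult{p})$ gives $pq'\in I$, i.e. $pq'$ is inadmissible, and by \Cref{lem-initial-termina-subpaths-special}(2) together with \Cref{lem-right-uniserial-cyclic-modules-uniqueness} this $q'$ is the unique shortest path with left arrow $a$ such that $pq'$ is inadmissible. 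This yields (2).

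The only point requiring care is the dualisation bookkeeping: in \Cref{lem-projective-covers-and-their-kernels-for-uniserial-left-modules} it is the right arrow of the auxiliary path that is pinned down and \Cref{lem-left-uniserial-cyclic-modules-from-admissible-paths} is applied to submodules of $\Lambda a$, whereas here it is the left arrow that is pinned down and \Cref{lem-right-uniserial-cyclic-modules-from-admissible-paths} is applied to submodules of $a\Lambda$; one must also keep straight that ${}_{a}M$ sits inside $a\Lambda\subseteq e_{v}\Lambda$, which is exactly what the identity $e\jacrad={}_{a}K\oplus a\Lambda$ encodes. Beyond this relabelling the argument is a line-by-line transcription of the proof of \Cref{lem-projective-covers-and-their-kernels-for-uniserial-left-modules}, so I do not anticipate any genuine obstacle.
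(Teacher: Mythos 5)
Your proposal is correct and is exactly the dual argument the paper itself gives for this lemma (its proof is the single line that it is dual to \Cref{lem-projective-covers-and-their-kernels-for-uniserial-left-modules}); indeed your observation that ${}_{a}K\subseteq\ker(\leftmult{p})\subseteq e_{v}\jacrad={}_{a}K\oplus a\Lambda$ already yields $\ker(\leftmult{p})={}_{a}K\oplus\bigl(a\Lambda\cap\ker(\leftmult{p})\bigr)$ by modularity, which slightly streamlines the coefficient chase of the left-handed proof. The one slip is the clause ``since $p$ is admissible this forces $r_{e}=0$'': that implication is false as stated (in \Cref{running-2-by-2-example-part-1}, with $p=a$, the element $\pi e_{u}-ba+I$ lies in $\ker(\leftmult{a})$ because $\pi a-aba\in I$, yet its trivial-path coefficient is $\pi\neq 0$); the correct justification, as in the paper's left-handed proof, is that $\ker(\leftmult{p})\subseteq e_{v}\jacrad=\sum_{h(b)=v}b\Lambda$ allows you to choose a representative supported on non-trivial paths---and in any case that sentence is redundant once your direct-sum decomposition is in place.
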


\begin{proof}[Proof of \Cref{thm-main-string-biserial}. ]
By assumption $\Lambda=RQ/I$ is a string algebra  over $R$. 
See \Cref{defn-string-pairs-and-algebras}. 
Since  $I$ is arrow-radical the radical of $\Lambda e_{v}$ (respectively, $e_{v}\Lambda$) is the sum of the modules $\Lambda a$ (respectively, $a\Lambda$) where $a$ runs through the arrows with tail (respectively, head) equal to $v$. 
Since every such $v$ is the tail (respectively, head) of at most $2$ such $a$, this means the radical of $\Lambda e_{v}$ (respectively, $e_{v}\Lambda$) is the sum of at most $2$ modules of the form $\Lambda a$ (respectively $a\Lambda$). 
Since $I$ is bounded below and special, by \Cref{thm-main-string-biserial-multi} each of these summands are uniserial. 
Since $I$ is arrow-direct any distinct pair of such summands has a trivial intersection. 
 Thus $\Lambda$ is biserial as a semiperfect noetherian ring; see \S\ref{subsec-biserial}. 

For the remainder of the proof fix an admissible non-trivial path $p$ with left arrow $b$ and  right arrow $a$. 

 (1) Let $\jacrad=\rad(\Lambda)$. 
 By \Cref{uniserial-module-left-right-compatability-one-side} we have $\jacrad p\cap b\Lambda\subseteq p\jacrad$ and by \Cref{uniserial-module-left-right-compatability-other-side} we have $p\jacrad\cap \Lambda a\subseteq \jacrad p$. 

(2) Suppose $p$ has tail $u$ and head $v$. 
By \Cref{rem-projective-covers-by-path-multiplication} the map $\rightmult{p}$ defines a projective cover of $\jacrad e_{u}$.  
Furthermore without loss of generality we can assume $ap$ is admissible for some arrow $a$. 
Let $K_{a}$ be  $0$ if $a$ is the only arrow with tail $v$. 
Otherwise let $K_{a}=\Lambda b$ where $b$ is the unique arrow distinct from $a$ with tail $v$. 

By \Cref{lem-projective-covers-and-their-kernels-for-uniserial-left-modules} the kernel $\ker(\rightmult{p})$ is a direct sum of $K_{b}$ and a module of the form $\Lambda q$ where $q$ is an admissible path of  minimal length such that $qp$ is inadmissible and such that $a$ is the right arrow of $q$. 
 
 (3) 
 Follows a similar argument to the proof of (2), using instead  \Cref{lem-projective-covers-and-their-kernels-for-uniserial-right-modules}.     
\end{proof}



    


\section{Examples}
\label{section-examples}

\subsection{String algebras of Butler and Ringel}
\label{sec-string-algebras-butrin}

In \S\ref{sec-string-algebras-butrin} we recover the algebras from \cite{ButRin1987}.

\begin{lem}
\label{lem-special-conditions-and-arrow-radical-over-field-means-monomial}   
Let $k$ be a field and let  $I\triangleleft kQ$ be arrow-radical, arrow-direct and special. 
If $I\subseteq A^{2}$ then $I=\langle Z\rangle$ where $Z$ is the set of inadmissible paths. 
\end{lem}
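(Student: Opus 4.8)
The plan is to prove the nontrivial inclusion $I\subseteq\langle Z\rangle$, the reverse being immediate since $Z\subseteq I$ and $I$ is a two-sided ideal. First I would observe that $\langle Z\rangle$ is spanned by the paths admitting an inadmissible subpath, that every such path is itself inadmissible, and hence that $\langle Z\rangle$ is exactly the $k$-span of the inadmissible paths, a subset of the path basis of $kQ$. Consequently, for $x\in I$, subtracting the inadmissible part of $x$ (which lies in $\langle Z\rangle\subseteq I$) reduces us to the case where $x$ is a $k$-linear combination of \emph{distinct admissible} paths; the goal then becomes $x=0$. Suppose not. Replacing $x$ by $e_{v}xe_{w}$ for a suitable pair of vertices $v,w$ (each lies in $I$, and at least one is nonzero), I may assume all paths occurring in $x$ have common head $v$ and common tail $w$; since $I\subseteq A^{2}$, each occurring path has length $\geq 2$, in particular has a right arrow. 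Write $x=\sum_{i}c_{i}p_{i}$ with $c_{i}\in k^{\times}$ and the $p_{i}$ distinct admissible paths as above.

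Next I would reduce to the case where all $p_{i}$ share a common right arrow, using arrow-directness. Grouping by right arrow, write $x=\sum_{a:\,t(a)=w}r_{a}a$ in $kQ$ with $r_{a}=\sum_{\text{right arrow of }p_{i}=a}c_{i}q_{i}$ and $p_{i}=q_{i}a$. In $\Lambda=kQ/I$ the relation $\bar x=0$ gives, for each fixed $a$, that $\overline{r_{a}}\,\bar a=-\sum_{b\neq a}\overline{r_{b}}\,\bar b$ lies in $\Lambda\bar a\cap\sum_{b\neq a,\,t(b)=w}\Lambda\bar b$, which is $0$ by \Cref{defn-arrow-direct}(v). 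Hence $r_{a}a\in I$ for every $a$, and replacing $x$ by $r_{a}a=\sum_{\text{right arrow}=a}c_{i}p_{i}$ for some $a$ that actually occurs, I get a nonzero $k$-combination of distinct admissible paths with common head $v$, common tail $w$, and common right arrow.

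Finally I would obtain the contradiction from arrow-radicality. By \Cref{lem-initial-termina-subpaths-special}(1) — whose proof uses only that $I$ is special — admissible paths with a common right arrow are totally ordered by the ``is a right subpath of'' relation, so reindexing I may take $p_{1}$ shortest, write $p_{i}=s_{i}p_{1}$ with $s_{1}=e_{h(p_{1})}$ and $s_{i}$ a non-trivial path with tail $h(p_{1})$ for $i\geq 2$, and hence $x=sp_{1}$ in $kQ$ with $s=c_{1}\,e_{h(p_{1})}+\sum_{i\geq 2}c_{i}s_{i}$. Since $I$ is arrow-radical, $\Lambda e_{h(p_{1})}$ is a nonzero cyclic module whose unique maximal submodule is $M:=\sum_{t(b)=h(p_{1})}\Lambda\bar b$ (\Cref{defn-arrow-radical}(i), \Cref{lem-arrow-direct-radical-implies-semiperfect-basic}); here $\overline{e_{h(p_{1})}}\notin M$ while $\overline{s_{i}}\in M$ for $i\geq 2$, so $\bar s\notin M$. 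On the other hand $\rightmult{p_{1}}\colon\Lambda e_{h(p_{1})}\to\Lambda p_{1}$ is a nonzero surjection because $p_{1}$ is admissible, so $\ker(\rightmult{p_{1}})$ is a proper submodule, hence $\ker(\rightmult{p_{1}})\subseteq M$; but $\bar s\in\ker(\rightmult{p_{1}})$ since $\bar x=0$, contradicting $\bar s\notin M$. Therefore $x=0$, and $I=\langle Z\rangle$.

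The hard part will be the arrow-direct reduction in the second paragraph: one must check carefully that, after grouping the $p_{i}$ by right arrow and discarding, the surviving expression is still a \emph{nonzero} $k$-combination of \emph{distinct admissible} paths sharing head $v$ and tail $w$ (so the reduction is legitimate), and that the only genuine use of the hypotheses there is condition (v) of arrow-directness; everything else is bookkeeping together with the combinatorial fact \Cref{lem-initial-termina-subpaths-special}(1) and the definition of arrow-radicality. I would also flag that \Cref{lem-initial-termina-subpaths-special}(1) is invoked only through its ``special''-only hypothesis, so that no boundedness assumption is needed.
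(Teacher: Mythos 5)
Your proposal is correct and follows essentially the same route as the paper's proof: reduce to a $k$-combination of admissible paths with a common head and tail, split it by right arrows using arrow-directness (v), factor out the common right subpath, and get a contradiction from arrow-radicality via the locality of $\Lambda e_{h(p_{1})}$ and the unit coefficient on the trivial path. The only differences are organizational: you cite \Cref{lem-initial-termina-subpaths-special}(1) (correctly noting its proof uses only specialness) where the paper re-runs that specialness argument inline, and you phrase the final contradiction as $\bar{s}\in\ker(\rightmult{p_{1}})\subseteq\rad(\Lambda e_{h(p_{1})})$ versus $\bar{s}\notin\rad(\Lambda e_{h(p_{1})})$, while the paper equivalently concludes that the admissible subpath itself lies in $I$.
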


\begin{proof}
The inclusion $\langle Z\rangle\subseteq I$ follows by construction. 
For a contradiction we suppose $x\in e_{v}kQe_{u}\cap  (I\setminus \langle Z\rangle)$ for some vertices $u$ and $v$. 
Since $\langle Z\rangle\subseteq I\subseteq A^{2}$ we write $x=\sum r_{p}p$ with 
$r_{p}\in k$ supported over a finite subset $P$ of the set of admissible paths $p$ of  length at least $2$ such that $t(p)=u$ and $h(p)=v$. 
For each arrow $a$ with tail $u$ let $x(a)=\sum_{p\in P(a)} r_{p}p$ where $P(a)$ is the subset of paths $p\in P$ with right arrow   $a$. 

Hence $x=\sum x(a)$ where the sum runs over the arrows $a$ with tail $u$. 
Since $x\in I$ and $I$ is arrow-direct we have $x(a)\in I$ for each $a$. 
Since $x\notin \langle Z\rangle$ there is some arrow $b$ with $t(b)=u$ and  $x(b)\notin \langle Z\rangle$. 

Choose a path $q\in P(b)$ of maximal length such that every element in $P(b)$ has  $q$ as a right subpath. 
By construction there is a finite non-empty set $P'$ of paths such that $P(b)=\{pq\mid p\in P'\}$ and  $x=(\sum_{p\in P'}r_{pq}p)q$. 
Since all the elements of $P'$ have tail $h(q)$ they all have $e_{h(q)}$ as a right subpath. 
Hence as above there exists a path $e$ of maximal length that is a right subpath of every element in $P'$ and this means $eq$ is a right subpath of every element in $P(b)$. 
By maximality of the length of $q$ means $e=e_{h(q)}$.

Consider a pair of non-trivial paths in $P'$ whose right arrows are $c$ and $d$. 
Since $q$ is non-trivial it has a left arrow  $a'$ that gives $ca',da'\notin I$ since the paths $cq,dq$ must be admissible, that then means $c=d$ since $I$ is special. 
Hence $P'$ cannot consist entirely of non-trivial paths, for otherwise they all have the same right arrow $c$, contradicting the maximality of the length of $e$.  
In other words, by the argument above we have $P'\ni e$. 
Let $z=r_{e}^{-1}y$ where $y=\sum_{p\in P'}r_{pq}p$. 
Note $x(b)=yq$. 
This gives $\Lambda z+\sum_{t(a)=v}\Lambda a=\Lambda e$. 
Since $I$ is arrow-radical  $\sum_{t(a)=v}\Lambda a$ is superfluous and so $\Lambda z=\Lambda e$. 

So $e+I=z'z$ for some $z'\in \Lambda$. 
Combining everything so far we have $q\in z'r_{e}^{-1}x(b)+I$.  
Since $x(b)\in I$ this contradicts that $q$ is an admissible path. 
\end{proof}
\Cref{prop:butler-ringel-string-for-a-field} recovers the notion of a string algebra from \cite{ButRin1987}. 

\begin{prop}
\label{prop:butler-ringel-string-for-a-field}
If $k$ is a field then $\Lambda$ is a string algebra over $k$ (in the sense we are using) if and only if $\Lambda$ is a string algebra in the sense of Butler and Ringel \cite{ButRin1987} where the quiver is taken to be finite. 
\end{prop}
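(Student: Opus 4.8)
The plan is to unwind both definitions and match them clause by clause, the only non-formal input being \Cref{lem-special-conditions-and-arrow-radical-over-field-means-monomial}. Recall that a string algebra in the sense of \cite{ButRin1987} is a quotient $kQ/I$ of a (here, finite) path algebra such that: (B1) every vertex is the tail of at most two arrows and the head of at most two arrows; (B2) for each arrow $b$ there is at most one arrow $a$ with $ab\notin I$ and at most one arrow $c$ with $bc\notin I$; (B3) $I$ is generated by a set of paths; and (B4) $I$ is admissible, meaning $A^{m}\subseteq I\subseteq A^{2}$ for some $m>0$. Over a field, where $\maxideal=0$ and hence $\maxideal Q=0$, the conditions of \Cref{defn-admissible} collapse to exactly $A^{m}\subseteq I\subseteq A^{2}$, so the two admissibility notions agree; and when $I$ is moreover generated by paths, the admissible paths (those not in $I$) form a $k$-basis of $\Lambda$, a fact I will use freely. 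A minor preliminary point is to align the arrow-composition convention of \cite{ButRin1987} with the one fixed in \Cref{arrow-direction-convention-remark}; this only relabels ``left'' versus ``right'' and does not affect the symmetric hypotheses below.

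Assume first that $\Lambda=kQ/I$ is a string algebra over $k$ in the sense of \Cref{defn-string-pairs-and-algebras}. Conditions (B1) and (B2) are immediate: (B1) is the last clause of \Cref{defn-string-pairs-and-algebras}, and (B2) restates that $I$ is special, noting that a length-$2$ path $ab$ is admissible precisely when $ab\notin I$. The remark following \Cref{defn-string-pairs-and-algebras} gives that $I$ is admissible in the sense of \Cref{defn-admissible}, hence $A^{m}\subseteq I\subseteq A^{2}$, which is (B4). Finally $I\subseteq A^{2}$ together with arrow-radicality, arrow-directness and speciality puts us exactly in the hypotheses of \Cref{lem-special-conditions-and-arrow-radical-over-field-means-monomial}, which yields $I=\langle Z\rangle$ with $Z$ the set of inadmissible paths; this is (B3). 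Hence $\Lambda$ is a string algebra in the sense of \cite{ButRin1987}.

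Conversely, assume $\Lambda=kQ/I$ is a string algebra in the sense of \cite{ButRin1987}. Then $\maxideal Q=0$ and $A^{m}\subseteq I\subseteq A^{2}$, so $I$ is permissible (no arrow lies in $I$) and $I$ is admissible in the sense of \Cref{defn-admissible}; speciality is (B2) and the arrow-count condition of \Cref{defn-string-pairs-and-algebras} is (B1). It remains to verify that $I$ is arrow-radical and arrow-direct. By (B3) and the basis of admissible paths, $\jacrad$ is spanned by the admissible paths of positive length, so $\jacrad e_{v}=\sum_{t(a)=v}\Lambda a$ and $\Lambda e_{v}/\jacrad e_{v}\cong k$ is simple, giving arrow-radicality (i), and dually (ii). For any arrow $a$ the module $\Lambda a$ has as a $k$-basis the admissible paths with right arrow $a$; for distinct arrows $b$ with $t(b)=t(a)$ these basis sets are pairwise disjoint, since a path has a unique right arrow, so $\Lambda a\cap\sum_{b}\Lambda b=0$, which is arrow-directness (v), and dually (vi). Thus $\Lambda$ is a string algebra over $k$ in the sense of \Cref{defn-string-pairs-and-algebras}.

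The step carrying the real weight is the appeal to \Cref{lem-special-conditions-and-arrow-radical-over-field-means-monomial} in the first direction, which is precisely what forces $I$ to be monomial; everything else is definition-matching and standard basis bookkeeping over a field, and the only thing demanding care is the bookkeeping that translates the B\"{a}ckstr\"{o}m/Butler--Ringel arrow conventions into ours.
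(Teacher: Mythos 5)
Your proof is correct and takes essentially the same route as the paper: the forward direction rests on \Cref{lem-special-conditions-and-arrow-radical-over-field-means-monomial} together with admissibility supplied by \Cref{thm-admissible-presentations}, and the converse is a clause-by-clause check of \Cref{defn-string-pairs-and-algebras}. The only divergence is bookkeeping: the paper derives $A^{m}\subseteq I\subseteq A^{2}$ from Butler--Ringel's conditions $(3)$, $(3^{*})$ plus monomial generation for a finite quiver rather than folding admissibility into their definition as you do, and it cites \cite{Ass2006} and the discussion in \cite{ButRin1987} for arrow-radicality, permissibility and arrow-directness where you verify these directly from the $k$-basis of admissible paths.
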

\begin{proof}
By \Cref{lem-special-conditions-and-arrow-radical-over-field-means-monomial} any string algebra over $k$ is defined by a monomial ideal in $I\triangleleft kQ$. 
By \Cref{thm-admissible-presentations} $I$ is admissible. 
Thus, assuming $I$ is special and assuming every vertex is the head (respectively, tail) of at most $2$ arrows, any such an algebra is a string algebra in the sense of \cite{ButRin1987}.

Conversley suppose $\Lambda=kQ/I$ is a string algebra in the sense of \cite{ButRin1987}, and take the quiver $Q$ to be finite. 
By conditions $(3)$ and $(3^{*})$ from \cite[p. 157]{ButRin1987} we have that $\Lambda$ is finite-dimensional over $k$. 
For such algebras $I$ is monomial and generated by a set of paths of length at least $2$, meaning the set of inadmissible paths form a $k$-basis for $I$, and hence the set of admissible paths defines a $k$-basis for $\Lambda$. 
In particular $I$ is permissible and arrow-direct. 
See the discussion after the Lemmas in \cite[p. 169]{ButRin1987}. 
Since $I$ is admissible, meaning that $A^{m}\subseteq I\subseteq A^{2}$, we have that $I$ is arrow-radical by \cite[\S II.2, Lemma 2.10]{Ass2006}. 
By $(1)$ (respectively, $(1^{*})$) from \cite[p. 157]{ButRin1987}   each vertex is the head (respectively, tail) of at most $2$ arrows.  
By $(2)$ and $(2^{*})$ we have that $I$ is special. 
This completes the proof that $\Lambda$ is a string algebra over $k$. 
\end{proof}

\subsection{B\"{a}ckstr\"{o}m orders} 
\label{sec-backstrom}

In \S\ref{sec-backstrom} we recall the notion of an   \emph{order} to provide examples of path algebras over local rings in the sequel. 
We begin with a simplifying assumption. 
\begin{setup}
    In \S\ref{sec-backstrom} assume that $R$ is an $\maxideal$-adically complete discrete valuation ring, say where $\maxideal=\langle \pi\rangle$. 
Note $\pi\in R$ is regular and  $R$ is not a field. 
Let $F=\mathrm{Frac}(R)$,  the field of fractions of $R$. 
\end{setup}
Recall that an $R$-\emph{order} is a subring $\Gamma$ of a finite-dimensional $F$-algebra $\Omega$ such that $\Gamma$ is a full $R$-lattice\footnote{Hence $\Gamma$ spans $\Omega$ as a vector-space over $F$, and $\Gamma$ is a finitely-generated $R$-submodule of $\Omega$. Here the $R$-action on $\Gamma$ is the restriction of the $F$-action on $\Omega$ via the embeddings $R\to F$ and $\Gamma\to \Omega$.   } in $\Omega$. 
Recall, say following Ringel and Roggenkamp \cite{RinRog1979}, that a \emph{B\"{a}ckstr\"{o}m order} is a subring $\Lambda$ of a hereditary $R$-order $\Gamma$ such that $\rad(\Lambda)=\rad(\Gamma)$. 
Such rings have a typical form. 

Let 
$\boldsymbol{\mathrm{H}}_{n}(R)$ be the subring of $\boldsymbol{\mathrm{M}}_{n}(R)$ consisting of matrices $(r_{ij})$ satisfying $r_{ij}\in \maxideal$ whenever $i<j$. 
The ring $\boldsymbol{\mathrm{H}}_{n}(R)$ is an example of a hereditary $R$-order, and one can check  $\rad(\boldsymbol{\mathrm{H}}_{n}(R))$ is equal to the set of matrices $(r_{ij})\in \boldsymbol{\mathrm{H}}_{n}(R)$ satisfying $r_{ij}\in \maxideal$ whenever $i\leq j$. 
See for example work of Roggenkamp \cite[p. 523]{Roggenkamp-rep-theory-blocks} and Roggenkamp and Wiedemann \cite[p. 2528, \S1.4]{roggwied1984auslander}. 
\begin{example}
\label{Backstrom-order-example}
Let $\Lambda$ and $\Gamma$ be the subsets of $\boldsymbol{\mathrm{M}}_{3}(R)$ defined by 
\[
\begin{array}{cc}
\Lambda=
\left\{
\medmatrix{
r_{11}& r_{12} & r_{13}\\
r_{21} & r_{22}  &  r_{23}\\
r_{31} & r_{32}  &  r_{33}
}
\colon r_{ij}\in R,\,r_{11}-r_{33},r_{12},r_{23},r_{13}\in \maxideal
\right\},
&
\Gamma=\boldsymbol{\mathrm{H}}_{3}(R)=\medmatrix{
R & \maxideal & \maxideal\\
R & R  &  \maxideal\\
R & R  &  R
}.
\end{array}
\]
Let $\boldsymbol{1}$ be the identity matrix in $\boldsymbol{\mathrm{M}}_{3}(R)$. 
It is straightforward to check that the rings $\Lambda$ and $ \Gamma $ are both $R$-subalgebras of $\boldsymbol{\mathrm{M}}_{3}(R)$ under the map $\mathsf{a}\colon R\to \Lambda$ given by $r\mapsto r\boldsymbol{1}$. 
Define ideals  $W$, $X$ and $Y$ of $\Lambda$ by 
\[
\begin{array}{ccc}
W=\medmatrix{
\maxideal & \maxideal & \maxideal\\
R & \maxideal &  \maxideal\\
R & R  &  \maxideal
},
&
X=\Lambda\cap \medmatrix{
R & \maxideal & \maxideal\\
R & \maxideal &  \maxideal\\
R & R  &  R
},
&
Y=\medmatrix{
\maxideal & \maxideal & \maxideal\\
R & R &  \maxideal\\
R & R  &  \maxideal
}.
\end{array}
\]
For example  $X$ is the set of matrices $(r_{ij})\in \Lambda$  such that $r_{22}\in \maxideal$.  
Hence $\rad(\Gamma)=W$ and  $X\cap Y=W$. 

We claim that $\rad(\Lambda)=W$. 
Let  $\mathsf{g}\colon \Lambda\to \Lambda/X$ and $\mathsf{h}\colon \Lambda\to \Lambda/Y$ be the quotient maps. 
Since $\im(\mathsf{a})\ni \boldsymbol{1}\notin X$ we have $\mathsf{g}\mathsf{a}\neq 0$ and so $\ker(\mathsf{g}\mathsf{a})\subseteq\maxideal$. 
On the other hand $m\boldsymbol{1}\in X$ and so   $\ker(\mathsf{g}\mathsf{a})=\maxideal$. 
Similarly $\ker(\mathsf{h}\mathsf{a})=\maxideal$. 
It is straightforward to check $\mathsf{h}\mathsf{a}$ is onto. 
That $\mathsf{g}\mathsf{a}$ is onto follows from the fact that $r_{11}+\maxideal=r_{33}+\maxideal$ whenever $(r_{ij})\in \Lambda$. 
Hence $\rad(\Lambda)\subseteq X\cap Y=W$. 
Conversley let $(r_{ij})\in W$. 
Note that 
$\mathrm{det}(
\boldsymbol{1}+(r_{ij})
)\notin \maxideal$, and so 
$\boldsymbol{1}+(r_{ij})$ has an inverse $(t_{ij})$ in $\boldsymbol{\mathrm{M}}_{3}(R)$. 
Now observe that $(t_{ij})\in \Lambda$ using that $(t_{ij})(\boldsymbol{1}-(r_{ij}))=\boldsymbol{1}$. 
\end{example}

\subsection{Non-classical examples}
\label{sec-non-classical-examples}

We now give examples outside the scope of \Cref{prop:butler-ringel-string-for-a-field}. 
In \S\ref{sec-fields}--\ref{sec-drozd} we choose different values for $(Q,I)$. 
We exhibit the $R$-algebras  $\Lambda=RQ/I$ as string algebras over $R$ that are found in work of other authors \cite{BurDro2006,Dro2001,Fields-Examples-1969,Gab1987,GelPon1968,Rin1975}. 
In each example we follow steps $1$ to $5$  below. 
\begin{enumerate}
    \item We will make choices for $Q$, $I$,  an order $\Gamma$ and a ring map $\theta\colon RQ\to \Gamma$.  
    \item We will express any element of $\Lambda=RQ/I$ as a coset $y+I$ with $y$ in some canonical form. 
    \item We will prove that $I=\ker(\theta)$ and describe $\im (\theta)$.
    \item We will explain why $\Lambda$ is a B\"{a}ckstr\"{o}m $R$-order.
    \item We will explain why $\Lambda$ is a string algebra over $R$; recall \Cref{defn-string-pairs-and-algebras}. 
\end{enumerate}
For (1), to define an $R$-algebra homomorphism $\theta\colon RQ\to \Gamma$ it suffices to multiplicatively and $R$-bilinearly extend an assignment on trivial paths $e_{v}$ and arrows $a$, so long as the assignments satisfy 
\[
\begin{array}{ccccc}
  \sum_{v} \theta(e_{v}) =1, & \theta(e_{u})^{2}=\theta(e_{u}), & \theta(e_{u})\theta(e_{w})=0, & \theta(e_{h(a)})\theta(a)=\theta(a) =\theta(a) \theta(e_{t(a)}) & (*)
\end{array}
\]
where the sum runs over vertices and the other equations hold for all vertices $u\neq w$ and arrows $a$. 

For (2) we will find the canonical form $y$ by using generators of $I$ to rewrite cosets of arbitrarily long paths. 
For (3), $I\subseteq \ker(\theta)$ will be clear, and with (2) established the reverse inclusion will be straightforward.   

For (4) we compute jacobson radicals using results in \cite{BurDro2006,Dro2001,Fields-Examples-1969,Gab1987,GelPon1968,Rin1975}.  
For (5), with (3) established we compute the set $P$ of admissible paths by showing $\theta(p)\neq 0$ for any $p\in P$.  
It will then be straightforward by (4) to see that $I$ is arrow-radical, arrow-direct and permissible. 
\subsubsection{Answer of Fields to a question of Kaplansky}
\label{sec-fields}
(1) 
In \S\ref{sec-fields} take $Q$ and $I$ to be
\[
\begin{array}{cc}
Q=
\begin{tikzcd}[column sep=0.7cm, row sep=0.3cm]
1\arrow[out=150,in=210,loop,  "", "a"',distance=0.6cm]
\arrow[r, bend right, swap,"b"]
&
2\arrow[l,"c", swap, bend right]
\end{tikzcd}\,
&
I=\langle a^{2},\,bc,\,\pi e_{1}-acb-cba,\,\pi e_{2}-bac\rangle.
\end{array}
\]
We will establish a presentation of $\Lambda$ so that, when  $R=k\llsquare t \rrsquare $ and $\pi =t$, this  $R$-algebra specifies to an example discussed in work of Fields \cite{Fields-Examples-1969} that provided an answer to a question of  Kaplansky. 
Let $\Gamma= \boldsymbol{\mathrm{H}}_{3}(R)$
and
\[
\begin{array}{cc}
\theta(e_{1})=
\medmatrix{
1 & 0 & 0\\
0 & 0 & 0\\
0 & 0 & 1
}
,
\,
\theta(e_{2})= 
\medmatrix{
0 & 0 & 0\\
0 & 1 & 0\\
0 & 0 & 0
},
&
\theta(a)= 
\medmatrix{
0 & 0 & \pi \\
0 & 0 & 0\\
0 & 0 & 0
},
\,
\theta(b)=
\medmatrix{
0 & 0 & 0\\
1 & 0 & 0\\
0 & 0 & 0
},
\,
\theta(c)=
\medmatrix{
0 & 0 & 0\\
0 & 0 & 0\\
0 & 1 & 0
}.
\end{array}
\]
It may be observed that the equations labelled $(*)$ from the start of \S\ref{sec-non-classical-examples} hold. 

(2) 
Let $g_{1}=\pi e_{1}-acb-cba$ and $g_{2}=\pi e_{2}-bac$. 
Note that $bacb=\pi b-g_{2}b$, $cbac=\pi c-g_{2}c$ and $acba=\pi a-cba^{2}-g_{1}a$. 
By induction, for any $n>0$ the following expressions all define elements of $I$: 
\[
\begin{array}{ccc}
b(acb)^{n}-\pi^{n}b,
&
c(bac)^{n}-\pi^{n}c,
&
a(cba)^{n}-\pi^{n}a,
\\
cb(acb)^{n}-\pi^{n}cb,
&
ac(bac)^{n}-\pi^{n}ac,
&
ba(cba)^{n}-\pi^{n}ba,
\\
(acb)^{n+1}-\pi^{n}acb,
&
(bac)^{n+1}-\pi^{n}bac,
&
(cba)^{n+1}-\pi^{n}cba.
\end{array}
\]
So if $\lambda\in \Lambda$ then  
$\lambda=y+I$ where for some $r_{1},r_{2},r_{a},r_{b},r_{c},r_{ac},r_{cb},r_{ba},r_{acb},r_{bac},r_{cba}\in R$ we have
\[
y=r_{1}e_{1}+r_{2}e_{2}+r_{a}a+r_{b}b+r_{c}c+r_{ac}ac+r_{cb}cb+r_{ba}ba+r_{acb}acb+r_{bac}bac+r_{cba}cba.
\]

(3) 
It is straightforward to check that the generators of $I$ all lie in the kernel $K$ of $\theta$. 
We claim the inclusion $I\subseteq K$ is an equality. 
Applying $\theta$ to the expression $y$ found in (2) we obtain
\[
\theta(y)=
\medmatrix{
r_{1}+\pi r_{acb} & \pi r_{ac} & \pi r_{a}\\
r_{b} & r_{2}+\pi r_{bac} & \pi r_{ba}\\
r_{cb} & r_{c} & r_{1}+\pi r_{cba}
}.
\]
Let $x\in K$. 
Writing $x+I=y+I$ for $y$ above, it follows that $y\in K$ since 
$I\subseteq K$. 
From $\theta(y)=0$ we know  $r_{b}=r_{c}=r_{cb}=0$. 
Since $\pi\in R$ is regular we also have that $r_{a}=r_{ac}=r_{ba}=0$ and that $r_{acb}=r_{cba}$. 
It then follows that $y=r_{cba}g_{1}+r_{bac}g_{2}\in I$, and so $I=K$. 
Note $\im(\theta)$ lies inside $S=
\left\{
(r_{ij})
\colon r_{11}-r_{33},r_{12},r_{23},r_{13}\in \maxideal
\right\}$.  
Let  
$r_{11}=r+r'\pi$, $r_{33}=r+r''\pi$, $r_{12}=s\pi $, $r_{23}=t\pi $ and $r_{13}=u\pi $ for $r,r',r'',s,t,u\in R$. 
Then
\[
(r_{ij})
=
\theta
(
re_{1}+r_{22}e_{2}+sac+tba+ua+r'acb+r''cba
).
\]
As claimed, when  $R=k\llsquare t\rrsquare$ and $\pi=t$, $\Lambda$ coincides with the ring $S$ from work of Fields \cite[p.129, \S4]{Fields-Examples-1969}.

(4) By \Cref{Backstrom-order-example} we have that $\rad(S)=\rad(\Gamma)$.

(5) 
Let $P$ be the set of paths in $Q$ that do not contain either of $a^{2}$ or $bc$ as a subpath. 
We claim $P$ is the set of admissible paths (in $Q$ with respect to $I$). 
The table below describes the image of $P$ under $\theta$. 

\begin{table}[H]
\begin{tabular}{|c|c|c|c|c|c|}
\hline
Path $p$ 
& 
$\theta(p)$
&
Path $p'$ 
& 
$\theta(p')$
&
Path $p''$ 
& 
$\theta(p'')$\\
\hline
$(acb)^{n}$             & 
$\medmatrix{
\pi^{n} &&& 0 &&& 0\\
0 &&& 0 &&& 0\\
0 &&& 0 &&& 0
}$
&
$b(acb)^{n}$             
&      
$\medmatrix{
0 &&& 0 &&& 0\\
\pi^{n} &&& 0 &&& 0\\
0 &&& 0 &&& 0
}$
&
$cb(acb)^{n}$             &  
$\medmatrix{
0 &&& 0 &&& 0\\
0 &&& 0 &&& 0\\
\pi^{n} &&& 0 &&& 0
}$
\\
\hline
$(bac)^{n}$             & 
$\medmatrix{
0 &&& 0 &&& 0\\
0 &&& \pi^{n} &&& 0\\
0 &&& 0 &&& 0
}$
&
$c(bac)^{n}$             
&          
$\medmatrix{
0 &&& 0 &&& 0\\
0 &&& 0 &&& 0\\
0 &&& \pi^{n} &&& 0
}$
&
$ac(bac)^{n}$             &  
$\medmatrix{
0 &&& \pi^{n+1} & 0\\
0 &&& 0 &&& 0\\
0 &&& 0 &&& 0
}$
\\
\hline
$(cba)^{n}$             & 
$\medmatrix{
0 &&& 0 &&& 0\\
0 &&& 0 &&& 0\\
0 &&& 0 &&& \pi^{n}
}$
&
$a(cba)^{n}$             &   
$\medmatrix{
0 &&& 0 &&& \pi^{n+1}\\
0 &&& 0 &&& 0\\
0 &&& 0 &&& 0
}$
&
$ba(cba)^{n}$             &   
$\medmatrix{
0 &&& 0 &&& 0\\
0 &&& 0 &&& \pi^{n+1}\\
0 &&& 0 &&& 0
}$
\\
\hline
\end{tabular}
\end{table}

It can be observed that $I$ is permissible since $\pi\neq 0$. 
Furthermore, $\pi$ is not nilpotent (since it is regular) and so $\theta(p)\neq 0$ and thus $p\notin I$ for all $p\in P$. 
Likewise $I$ is arrow-direct 
and arrow-radical. 

\subsubsection{Completion of the local ring of a simple node}
\label{sec-gabriel-example}
(1) 
In \S\ref{sec-gabriel-example} take $Q$ and $I$ to be
\[
\begin{array}{cc}
Q=
\begin{tikzcd}[column sep=0.7cm, row sep=0.3cm]
1\arrow[out=150,in=210,loop,  "", "a"',distance=0.6cm]
\arrow[out=330,in=30,loop,  "", "b"',distance=0.6cm]
\end{tikzcd}\,
&
I=\langle ab,\,ba,\,\pi e_{1}-a-b\rangle.
\end{array}
\]
When $R=k\llsquare t \rrsquare $ and $\pi=t$ we shall see below that there is an $R$-algebra isomorphism $\Lambda\cong k\llsquare a,b \rrsquare /\langle ab\rangle$. 
Finite-dimensional $\Lambda$-modules were classified by Gel`fand and Ponomarev \cite{GelPon1968}. 
Let $\Gamma=R\times R$ and define $\theta
\colon 
RQ\to  
\Gamma$ by extending $e_{1}
\mapsto 
(1,1)$, $a
\mapsto 
(\pi,0)$ and $b
\mapsto 
(0,\pi)$. 
It is immediate that $(*)$ from \S\ref{sec-non-classical-examples} holds.

(2) 
Let $g_{1}=\pi e_{1}-a-b$ so that $a^{2}=\pi a-ba-g_{1}a$ and $b^{2}=\pi b-ab-g_{1}b$ in $RQ$. 
So if $\lambda\in \Lambda$ then $\lambda=y+I$ for $y\in RQ$ of the form $y=re_{1}+r_{a}a+r_{b}b$ for some $r_{1}, r_{a}, r_{b}\in R$. 

(3) 
It is straightforward to check that $I\subseteq K$, and we claim $I\supseteq K$. 
We proceed as we did for \S\ref{sec-fields}(3). 
Let $z\in K$. 
Then $z+I\in \Lambda$ has the form  $y+I$ for $y$ as in \S\ref{sec-gabriel-example}(2) above. 
This gives $y\in K$ meaning $r+\pi r_{a}=r+\pi r_{b}=0$. 
Since $\pi$ is regular this gives $r_{a}=r_{b}$ and so $y=-r_{a}g_{1}\in I$. 
Thus $z\in I$ and so  $I=K$. 
It is clear that $\im(\theta)=\{(r,r')\in R\times R\mid r-r'\in \maxideal\}$. 

(4) 
We now specify $R=k\llsquare t\rrsquare$ and $\pi=t$. . 
The condition $r-r'\in \maxideal$ from (3) becomes equivalent to $r(0)=r'(0)$. 
Burban and Drozd \cite[Example 2.2(1)]{BurDro2006} present $\Lambda \cong \im(\theta)\cong k\llsquare a,b \rrsquare /\langle ab\rangle$ as a \emph{nodal algebra}, and this means $\Lambda$ is a B\"{a}ckstr\"{o}m $k\llsquare t\rrsquare$-order, and we have $\rad(\Lambda)=\rad(\Gamma)$. 

(5) 
Cosets of the powers $a^{n}$ and $b^{n}$ ($n\geq 0$) form a basis of the algebra $k[a,b]/\langle ab\rangle$ and there is a $k$-algebra embedding $k[a,b]/\langle ab\rangle\to \Lambda$. 
In particular $I$ is permissible. 
That $I$ is arrow-direct and arrow-radical are straightforward consequences of  $\rad(\im(\theta))=\maxideal\times \maxideal$. 
Thus $\Lambda \cong k\llsquare a,b \rrsquare /\langle ab\rangle$ is a string algebra over $k\llsquare t \rrsquare$. 

\subsubsection{Completion of the  dihedral algebra}
\label{sec-running-example}
(1) 
In \S\ref{sec-running-example} take $Q$ and $I$ to be
\[
\begin{array}{cc}
Q=
\begin{tikzcd}[column sep=0.7cm, row sep=0.3cm]
1\arrow[out=150,in=210,loop,  "", "a"',distance=0.6cm]
\arrow[out=330,in=30,loop,  "", "b"',distance=0.6cm]
\end{tikzcd}\,
&
I=\langle a^{2},\,b^{2},\,\pi e_{1}-ab-ba\rangle.
\end{array}
\]
When $R=k\llsquare t\rrsquare $ and $\pi=t$ we shall see below that there is an $R$-algebra isomorphism $\Lambda\cong k\llangle a,b\rrangle/\langle a^{2},b^{2}\rangle $.  
Ringel \cite{Rin1975} solved a classification problem
describing  indecomposable finite-dimensional $\Lambda$-modules. Let
\[
\begin{array}{cccc}
\Gamma=\boldsymbol{\mathrm{H}}_{2}(R), 
& 
\theta(e_{1}
)= 
\medmatrix{
1 & 0\\
0 & 1
},     &  
\theta( a
)= 
\medmatrix{
0 & 1\\
0 & 0
},    &
\theta(b)= 
\medmatrix{
0 & 0\\
\pi & 0
}.     
\end{array}
\]
As in \S\ref{sec-gabriel-example} the equations $(*)$ from \S\ref{sec-non-classical-examples} hold.

(2) 
Let $g_{1}=\pi e_{1}-ab-ba$, so  $aba=\pi a-ba^{2}-g_{1}a$ and $bab=\pi b-ab^{2}-g_{1}b$ and so elements of $\Lambda$ have the form $y+I$ where $y=re_{1}+r_{a}a+r_{b}b+r_{ab}ab+r_{ba}ba$ for some scalars $r,r_{a},r_{b},r_{ab},r_{ba}\in R$. 
The admissible paths are the  sequences alternating in $a$ and $b$.

(3) 
It is straightforward to check that $I\subset K$ where $K=\ker(\theta)$ and so $I\subseteq K$, and we claim $I\supseteq K$. 
Let $z\in K$ and write $z+I=y+I$ for $y$ as in (2).  
As in \S\ref{sec-gabriel-example} we have $y\in K$ so $r_{a}=r_{b}=0$. 
Since $\pi$ is regular we also have $r_{ab}=r_{ba}$ so $y=-r_{ab}g_{1}\in I$ giving $z\in I$ and so $K=I$. 
It is straightforward to check that the image of $\theta$ consists of matrices $(r_{ij})\in \boldsymbol{\mathrm{H}}_{2}(R)$ such that $r_{11}-r_{22}\in \maxideal$. 
See \S\ref{sec-gabriel-example}(3). 

(4) 
Just as we saw in \S\ref{sec-gabriel-example}(4), after  specifying by setting $R=k\llsquare t\rrsquare$ and $\pi=t$, the ring $\Lambda\cong \im(\theta)$ is another example of a nodal ring as exhibited by Burban and Drozd \cite[Example 2.2(2)]{BurDro2006}. 

(5) 
In  \Cref{running-2-by-2-example-part-1} we checked that $I$ is arrow-radical. 
In \Cref{running-2-by-2-example-part-2} we checked that $I$ is permissible and calculated the admissible paths. 
In  \Cref{running-2-by-2-example-part-3} we checked that $I$ is arrow-direct. 
It is clear that $I$ is special and bounded below and so the quotient $\Lambda=RQ/I$ is a string algebra over $R$. 

\subsubsection{A B\"{a}ckstr\"{o}m order for a transposition}
\label{sec-roggenkamp}
(1) 
In \S\ref{sec-roggenkamp} take $Q$ and $I$ to be
\[
\begin{array}{ccc}
Q=
\begin{tikzcd}[column sep=0.7cm, row sep=0.3cm]
1
\arrow[r, shift left = 1mm, bend right, swap,"a"]\arrow[r,bend right=90,looseness=1, swap, "b"]
&
2\arrow[l,"c", shift left = 1mm, swap, bend right]
\arrow[l,bend right=90,looseness=1, swap, "d"]
\end{tikzcd}\,
&
I=\langle ad,\,da,\,bc,\,cb,\,\pi e_{1}-ca-db,\,\pi e_{2}-ac-bd\rangle.
\end{array}
\]
We shall show that  $\Lambda$ is an example\footnote{Any B\"{a}ckstr\"{o}m order $\Lambda$ in a  hereditary order $\Gamma$ satisfies that the radical $\rad(P)$ of a projective indecomposable $\Gamma$-module $P$ is again projective and indecomposable, and hence taking radicals gives a permutation of the projective indecomposables; see \cite[\S 1.2]{Roggenkamp-AR-Backstrom-orders}. 
The permutation for \Cref{sec-roggenkamp} turns out to be a transposition.} considered by Roggenkamp \cite{Roggenkamp-AR-Backstrom-orders}.  
Let $\Gamma=\boldsymbol{\mathrm{H}}_{2}(R)\times \boldsymbol{\mathrm{H}}_{2}(R)$   
and    
\[
\begin{array}{ccc}
\begin{array}{c}
\theta(e_{1}
) =
\left(
\medmatrix{
1 & 0\\
0 & 0
},
\medmatrix{
1 & 0\\
0 & 0
}
\right),
\\
\vspace{-3mm}
\\
\theta(e_{2}
)=
\left(
\medmatrix{
0 & 0\\
0 & 1
}
,

\medmatrix{
0 & 0\\
0 & 1
}
\right),
\end{array}
&
\begin{array}{c}
\theta(a
)=
\left(
\medmatrix{
0 & 0\\
1 & 0
},
\medmatrix{
0 & 0\\
0 & 0
}
\right),
\\
\vspace{-3mm}
\\
\theta(b
)=
\left(
\medmatrix{
0 & 0\\
0 & 0
},
\medmatrix{
0 & 0\\
1 & 0
}
\right),
\end{array}
&
\begin{array}{c}
\theta(c
)=
\left(
\medmatrix{
0 & \pi\\
0 & 0
}
,
\medmatrix{
0 & 0\\
0 & 0
}
\right),
\\
\vspace{-3mm}
\\
\theta(d
)=
\left(
\medmatrix{
0 & 0\\
0 & 0
}
,
\medmatrix{
0 & \pi\\
0 & 0
}
\right).
\end{array}
\end{array}
\]
As in \S\ref{sec-running-example} it is clear that the equations $(*)$ from \S\ref{sec-non-classical-examples} are satisfied. 

(2) 
Let $g_{1}=\pi e_{1}-ca-db$ and $g_{2}=\pi e_{2}-ac-bd$. 
Hence for any $n>0$ we have
\[
\begin{array}{cccc}
aca = \pi a-adb-ag_{1},     & 
cac = \pi c-cbd-cg_{2},  &
  
bdb = \pi b-bca-ag_{1},    & 
dbd = \pi d-dac-dg_{1}. 
\end{array}
\]
Hence if  $\lambda\in \Lambda$ then for some $r_{1},r_{2}r_{a},r_{b},r_{c},r_{d},r_{ac},r_{ca},r_{bd},r_{db}\in R$ we have 
\[
\begin{array}{cc}
\lambda=y+I, 
& 
y=r_{1}e_{1}+r_{2}e_{2}+r_{a}a+r_{b}b+r_{c}c+r_{d}d+r_{ac}ac+r_{ca}ca+r_{bd}bd+r_{db}db.
\end{array}
\]

(3)
It is straightforward that $I\subseteq K$. 
We claim $I\supseteq K$. 
Let $z\in I$ and write $z+I=y+I$ for $y$ above. 
Adapting the argument from \S\ref{sec-running-example}(3) gives $y=-r_{ac}g_{1}-r_{bd}g_{2}\in I$ since $\pi$ is regular. 
It is straightforward to show that  $\im(\theta)=\{((r_{ij}),(s_{ij}))\in\boldsymbol{\mathrm{H}}_{2}(R)\times \boldsymbol{\mathrm{H}}_{2}(R)\mid  r_{11}-r_{22},s_{11}-s_{22}\in\maxideal\}$. 

(4) 
By (3) we have that $\Lambda$ arises as an example of a B\"{a}ckstr\"{o}m order considered by  Roggenkamp \cite[Examples 1.5(I)]{Roggenkamp-AR-Backstrom-orders}.  
In particular this means $\rad(\im(\theta))=\rad(\boldsymbol{\mathrm{H}}_{2}(R))\times \rad(\boldsymbol{\mathrm{H}}_{2}(R))$. 

(5) 
Let $P$ be the set of paths in $Q$ that do not contain any of $ad$, $da$,  $bc$ or $cb$ as a subpath. One can check $P$ is the set of admissible paths by considering the image of $P$. 
As in \S\ref{sec-fields}, likewise (5) follows.  

%
%

\subsubsection{Ring of quadratic modules}
\label{sec-drozd}
(1) 
In \S\ref{sec-drozd} take $Q$ and $I$ to be
\[
\begin{array}{ccc}
Q=
\begin{tikzcd}[column sep=0.7cm, row sep=0.3cm]
1\arrow[out=150,in=210,loop,  "", "a"',distance=0.6cm]
\arrow[r, bend right, swap,"b"]
&
2\arrow[l,"c", swap, bend right]
\arrow[out=330,in=30,loop,  "", "d"',distance=0.6cm]
\end{tikzcd}\,
&
I=\langle ac,\,ba,\,db,\,cd,\,\pi e_{1}-a-cb,\,\pi e_{2}-d-bc\rangle.
\end{array}
\]
When $R\cong \compactdhat_{p}$ and $\pi=p$ we shall exhibit $\Lambda$ as a ring considered by Drozd \cite[\S 4]{Dro2001} (for $p=2$), such that  finitely generated modules correspond to certain \emph{quadratic functors} arising in topology. 
See also work of Baues \cite[Proposition 2.2(3)]{Baues-quadratic-functors} and Burban and Drozd \cite[Example 2.2(4)]{BurDro2006}. 

(2) 
Let $\Gamma=R\times 
R\times \boldsymbol{\mathrm{H}}_{2}(R)$  and let $\Omega =F\times 
F\times 
\boldsymbol{\mathrm{M}}_{2}(F)$. 
Let 
\[
\begin{array}{ccc}
\begin{array}{c}
\theta(e_{1}
)=
\left(
1,
0,
\medmatrix{
1 & 0\\
0 & 0
}
\right),
\\
\vspace{-3mm}
\\
\theta(e_{2}
)=
\left(
0,
1,
\medmatrix{
0 & 0\\
0 & 1
}
\right),
\end{array}
&
\begin{array}{c}
\theta(a
)=
\left(
\pi,
0,
\medmatrix{
0 & 0\\
0 & 0
}
\right),
\\
\vspace{-3mm}
\\
\theta(b
)=
\left(
0,
0,
\medmatrix{
0 & 0\\
1 & 0
}
\right),
\end{array}
&
\begin{array}{c}
\theta(c
)=
\left(
0,
0,
\medmatrix{
0 & \pi\\
0 & 0
}
\right),
\\
\vspace{-3mm}
\\
\theta(d
)=
\left(
0,
\pi,
\medmatrix{
0 & 0\\
0 & 0
}
\right).
\end{array}
\end{array}
\]
(3) 
It is straightforward that $I\subseteq K$ where $K=\ker(\theta)$.  
Let $g_{1}=\pi e_{1}-a-cb$ and $g_{2}=\pi e_{2}-d-bc$. 
Note 
\[
\begin{array}{cccc}
bcb=\pi b-db-g_{2}b,
&
cbc=\pi c-ca-g_{1}c,
&
a^{2}=\pi a-cba-g_{1}a,
&
d^{2}=\pi d-bcd-g_{2}d.
\end{array}
\]
So $b(cb)^{n}-\pi^{n}b$, $c(bc)^{n}-\pi^{n}c$, $a^{n+1}-\pi^{n}a$ and $d^{n+1}-\pi^{n}d$ all lie in $I$ for any $n>0$. 
One can see $I=K$ by writing any $\lambda\in \Lambda$ as $\lambda=y+I$ where 
for some  $r_{1},r_{2},r_{a},r_{b},r_{c},r_{d},r_{bc},r_{cb}\in R$ we have 
\[
y=r_{1}e_{1}+r_{2}e_{2}+r_{a}a+r_{b}b+r_{c}c+r_{d}d+r_{bc}bc+r_{cb}cb.
\]

(4) Hence $\theta$ defines a presentation of the $R$-algebra $\Lambda$. 
Namely, it is straightforward to check that 
\[
\im (\theta)=
\left\{
\left(
r,
s,
\medmatrix{
t & \pi u\\
v & w
}
\right)\in 
R\times 
R\times 
\medmatrix{
R & \maxideal\\
R & R
}
\mid
r-t,s-w\in\maxideal
\right\}. 
\]

(5) 
The admissible paths here are those that do not have $ac$, $ba$, $db$ or $cd$ as a subpath.  
Hence and likewise one can also see that $I$ is arrow-direct. 
Now let $R=\compactdhat_{2}$, the $2$-adic integers. 
In \cite{BurDro2006} the authors present $\Lambda$ as a \emph{nodal ring} by noting  $\rad(\im(\theta))=\maxideal\times \maxideal\times\rad(\boldsymbol{\mathrm{H}}_{2}(R))$.
Thus $I$ is arrow-radical.

\begin{acknowledgements}
The author is grateful for  support by the Danish National Research Foundation (DNRF156); 
 the Independent Research Fund Denmark (1026-00050B); 
 and the Aarhus University Research Foundation (AUFF-F-2020-7-16). 
\end{acknowledgements}

\bibliographystyle{abbrv}
\bibliography{biblio}

\end{document}